\newcommand\rightmap[1]{\smash{\mathop{\rightarrow}\limits^{#1}}}
\newcommand{\cC}{\mathcal C}
\newcommand{\cM}{\mathcal M}
\newcommand{\cR}{\mathcal R}
\newcommand{\gs}{\sigma}
\newcommand{\e}{\varepsilon}
\newcommand{\Alb}{\text{\rm Alb}}
\newcommand{\Mor}{\text{\rm Mor}}
\newcommand{\MW}{\text{\rm MW}}
\newcommand{\Gr}{\text{\rm Gr}}
\newcommand{\Hom}{\text{\rm Hom}}
\newcommand{\End}{\text{\rm End}}
\newcommand{\Sing}{\text{\rm Sing}}
\newcommand{\AAA}{\mathbb A}
\newcommand{\ZZ}{\mathbb Z}
\newcommand{\CC}{\mathbb C}
\newcommand{\SSS}{\mathbb S}
\newcommand{\DD}{\mathbb D}
\newcommand{\QQ}{\mathbb Q}
\newcommand{\PP}{\mathbb P}
\newcommand{\NN}{\mathbb N}
\def\rk {{\rm rk\ }}
\def\Char {{\rm Char}}
\def\dim{{\rm dim}}
\def\deg{{\rm \,deg\, }}
\def\lcm{{\rm lcm}}
\def\Prod {{\displaystyle\prod}}
\def\cal{\mathcal}
\newtheorem{theorem}{Theorem}[section]
\newtheorem{lemma}[theorem]{Lemma}
\newtheorem{prop}[theorem]{Proposition}
\newtheorem{cor}[theorem]{Corollary}
\theoremstyle{definition}
\newtheorem{dfn}[theorem]{Definition}
\newtheorem{question}[theorem]{Question} 
\newtheorem{example}[theorem]{Example}
\theoremstyle{remark}
\newtheorem{remark}[theorem]{Remark}
\begin{document}

\title[Mordell-Weil groups of elliptic threefolds...]
{Mordell-Weil groups of elliptic threefolds 
and the Alexander module of plane curves}
\author{J.I.~Cogolludo-Agust{\'\i}n and A.~Libgober}
\address{Departamento de Matem\'aticas, IUMA\\
Universidad de Zaragoza\\
C.~Pedro Cerbuna 12\\
50009 Zaragoza, Spain}
\email{jicogo@unizar.es}

\address{
Department of Mathematics\\
University of Illinois\\
851 S.~Morgan Str.\\
Chicago, IL 60607}
\email{libgober@math.uic.edu}

\thanks{The first author was partially supported by the Spanish Ministry of 
Education MTM2010-21740-C02-02.
The second author was partially supported by NSF grant.}

\date{August 12, 2010}

\subjclass[2000]{14H30, 14J30, 14H50, 11G05, 57M12, 14H52}

\begin{abstract} 
We show that the degree of the Alexander polynomial of an irreducible plane algebraic 
curve with nodes and cusps as the only singularities does not exceed ${5 \over 3}d-2$ 
where $d$ is the degree of the curve. 
We also show that the Alexander polynomial $\Delta_\cC(t)$ of an irreducible curve 
$\cC=\{F=0\}\subset \PP^2$ whose singularities are nodes and cusps is non-trivial 
if and only if there exist homogeneous polynomials $f$, $g$, and $h$ such that $f^3+g^2+Fh^6=0$. 
This is obtained as a consequence of the correspondence, described here, 
between Alexander polynomials and ranks of Mordell-Weil groups of certain 
threefolds over function fields.
All results also are extended to the case of reducible curves and 
Alexander polynomials $\Delta_{\cC,\e}(t)$ corresponding to surjections 
$\e: \pi_1(\PP^2\setminus \cC_0 \cup \cC) \rightarrow \ZZ$, where $\cC_0$ is a line at infinity.
In addition, we provide a detailed description of the collection of 
relations of $F$ as above in terms of the multiplicities of the roots of 
$\Delta_{\cC,\e}(t)$. This generalization is made in the context of 
a larger class of singularities i.e. those which lead to
rational orbifolds of elliptic type.
\end{abstract}

\maketitle

\tableofcontents

\section{Introduction}

The Alexander polynomial of singular curves in $\PP^2_{\CC}$ 
provides an effective way to relate the fundamental group of the 
complement of such curves to the topology and geometry 
of their singularities. In this paper we show that the Mordell-Weil 
group of certain elliptic threefolds with constant $j$-invariant 
equal to either zero or 1728 is closely related to the Alexander 
module of plane curves associated with the threefolds. As a byproduct 
of this relation we obtain a bound on the degree of the Alexander 
polynomials of plane curves. The bound is linear in the degree of the 
curve and gives a new restriction on the groups which can be 
fundamental groups of the complements to plane curves. 

Let $\cC=\bigcup \cC_i$ be a curve in $\PP^2$ of degree $d$. Its Alexander polynomial is 
defined in terms of purely topological data:
 $G=\pi_1(\PP^2\setminus \cC_0\cup \cC)$, where $\cC_0$ is a line at infinity, 
and a surjection $\e: G \rightarrow \ZZ$ (cf. section~\ref{defalpol} 
for a definition). On the other hand, 
dependence of the Alexander polynomial of $\cC$, on its degree,  
the local type of its singularities, and their \emph{position}, 
relating the latter to the topology, 
 has been 
known for some time (cf.~\cite{Duke,position}).
For example, for irreducible curves having nodes and cusps as the only
singularities, the degree of the Alexander polynomial 
$\Delta_{\cC,\e}(t)$ equals $\rk G'/G''$ where $G'$ and $G''$ are 
respectively the first and second commutators of $G$ (in this case there is only one 
choice, up to sign, of $\e$). Moreover,
$\Delta_{\cC,\e}(t)$ is not trivial only if $6 \vert d$, in 
which case $\Delta_{\cC,\e}(t)=(t^2-t+1)^s$, where $s$ is the 
superabundance of the curves of degree $d-3-{d \over 6}$ passing through 
the cusps of $\cC$. For a given curve, this provides a purely geometric method
for calculation of the Alexander polynomial. 
However, how big can this superabundance be for a 
special cuspidal curve is still
not known (cf.~\cite{problems}). 
The largest known value of $s$ for irreducible cuspidal 
curves, to our knowledge, is 3.
This occurs for the dual curve to a non-singular cubic, that is a sextic
with 9 cusps. In this paper we give an example of an irreducible curve with
nodes and cusps as only singularities, for which the superabundance of the set
of cusps is equal to 4.

One of our main results is the inequality (cf.~Corollary~\ref{inequality}):
\begin{equation}\label{mainbound}
\deg \Delta_{\cC,\e} \le {5 \over 3}d-2.
\end{equation}

For reducible curves and general $\e$, 
the explicit relation with the fundamental group comes from 
the equality (cf.~\cite{Duke}):
\begin{equation}
\dim (G'/G''\otimes \QQ) \otimes_{\Lambda} 
{\Lambda}/{(t_1-t^{\e(\gamma_1)},\dots,
t_r-t^{\e(\gamma_r)})} 
=\deg \Delta_{\cC,\e},
\end{equation} 
where $\Lambda:=\ZZ[t_1^{\pm 1},\dots,t_r^{\pm 1}]$,
$r:=\rk(G/G')$ (if $\cC$ is a curve in $\CC^2$ then $r$ is 
the number of irreducible components of $\cC$), 
$\e:\ZZ^r\to \ZZ$ is an epimorphism, and $\gamma_i$ represents
the class of a meridian around the $i$-th irreducible component of $\cC$,
i.e. its abelian image in $H_1$ is $t_i$.

The main idea in this note is to relate the degree of the Alexander 
polynomial to the rank of the Mordell-Weil group over the field $\CC(x,y)$
of elliptic curves with $j$-invariant equal to either zero or 1728. 
This is done by relating the Alexander module of $G$ to rational 
pencils of elliptic type corresponding to $F$ (cf. Definition~\ref{def-orb}).

The relationship between Alexander invariants and pencils, 
in the case of line arrangements and reducible curves 
is discussed in~\cite{charvar,sergey}:
the positive dimensional components of characteristic varieties induce
maps between their complements and the complements to $p \ge 3$ points in $\PP^1$.
Here we show that non-vanishing of Alexander polynomial
of a curve yields existence of 
special pencils with non-reduced fibers. 
These pencils are such that
they induce rational maps from $\PP^2$ onto $\PP^1$ 
with an orbifold structure and take the 
curve $\cC$ onto a finite set of points. 
The pencils are orbifold elliptic pencils in the following sense
(cf. Definitions~\ref{def-orb} and~\ref{dfn-group-orb}): 
each map $\PP^2 \rightarrow \PP^1$ has three non-reduced fibers of the form 
$m_iD_i+F_i$, where $F_i$ divides the equation of $\cC$. The multiplicities $m_i$
are such that $\sum {1 \over {m_i}}=1$ and thus the orbifold structure is given by 
assigning multiplicities $m_i$ to those three points in $\PP^1$.

The possible orbifold structures $(m_1,m_2,m_3)$ depend on the local type of 
singularities of $\cC$.
For example for irreducible curves with only nodes and cusps as singularities, 
we relate the global Alexander polynomial of $\cC$ to the 
following functional relation: 
\begin{equation}\label{threefold}
P(x,y,z)^2+Q(x,y,z)^3+F(x,y,z)=0,
\end{equation}
where $F$ is an equation of $\cC$ and $P(x,y,z),Q(x,y,z)$ are homogeneous polynomials. 
Such a relation is equivalent to the existence
of a rational map from $\PP^2$ onto $\PP^1$ with the orbifold structure
$(2,3,6)$.
This orbifold structure can be obtained considering 
the global orbifold in the usual sense 
(cf.~\cite{ademruan}) corresponding 
to the action of a cyclic group of order 6 on 
an elliptic curve with non-trivial stabilizers at three points; 
their orders being equal to $2,3$ and $6$ respectively.

The correspondence between the Alexander modules and orbifold 
elliptic pencils is established in two rather different steps. On one hand,
the Alexander module of $\cC$ can be related to the Mordell-Weil group 
of the elliptic threefold:
\begin{equation}\label{threefold-2}
u^2+v^3=F(x,y,1)
\end{equation}
over the field $\CC(x,y)$ 
of rational functions in two variables having $j$-invariant equal to zero.
We have the following (cf. Theorem~\ref{theoremonWF}):

\begin{theorem} Let $\cC$ be an irreducible curve in $\PP^2$ having ordinary nodes 
and cusps as the only singularities. Let $F(x,y,1)=0$ be a (reduced) equation 
of the affine part of $\cC$. Then the $\ZZ$-rank of the Mordell-Weil group of the 
elliptic threefold (\ref{threefold-2})
is equal to the degree of the Alexander polynomial of the curve $\cC$.
\end{theorem}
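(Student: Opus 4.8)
The plan is to identify both sides with the same cohomological quantity attached to the cyclic multiple plane $Z_6$ obtained as the $6$-fold cover of $\PP^2$ branched along $\cC$ together with a line at infinity, exploiting the classical description of the Alexander polynomial via the eigenspace decomposition of $H^1$ of this cover, and the modern description of Mordell-Weil groups of isotrivial elliptic threefolds via Shioda-type arguments. First I would recall that for an irreducible cuspidal curve the Alexander polynomial is a power of $t^2-t+1$, so its degree equals twice the multiplicity of the primitive sixth root of unity $\zeta_6$ as an eigenvalue of the deck transformation acting on $H^1$ of the resolved $6$-cyclic cover; by Libgober's results (\cite{Duke,position}) this multiplicity is the superabundance $s$ of the linear system of curves of degree $d-3-\tfrac d6$ through the cusps, and in particular $\deg\Delta_\cC = 2s$. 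The key point is that this same number $s$ controls the ``interesting'' part of the transcendental cohomology of $Z_6$, namely the piece on which the covering group acts through the character of order exactly $6$.

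Next I would compute the Mordell-Weil group of (\ref{threefold-2}) directly. Writing $E_F$ for the elliptic threefold $u^2+v^3 = F(x,y,1)$ over $K=\CC(x,y)$, a section is a pair $(u(x,y),v(x,y))$ of rational functions with $u^2+v^3=F$; clearing denominators and homogenizing exactly produces a relation $P^2+Q^3+F\cdot(\text{power}) = 0$ of the type (\ref{threefold}), so $\MW(E_F/K)\otimes\QQ$ is the $\QQ$-vector space spanned by such functional relations modulo the trivial ones. The standard move, going back to Shioda's treatment of Fermat-type surfaces and its threefold analogue, is to pass to a smooth projective model $\widetilde{E_F}$ of $E_F$ — which is birational to the quotient of the sixfold cyclic cover $Z_6$ by a suitable action, or rather fibers over $\PP^2$ with generic fiber $E_F$ — and to use the Shioda-Tate exact sequence (in the threefold setting, comparing $\NS$/algebraic classes in middle cohomology) to express $\rk\MW(E_F/K)$ as the rank of a subspace of $H^2$ (equivalently $H^4$) of $\widetilde{E_F}$ that is not accounted for by the zero section, the fiber components over the discriminant locus $\{F=0\}$, and pullbacks from the base. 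Since $E_F$ has $j\equiv 0$, its cohomology is governed by the $\mu_6$-action, and the ``new'' part of $H^\bullet(\widetilde{E_F})$ is precisely the $\zeta_6$-eigenspace of $H^1$ of the branched $6$-cover $Z_6$ — the very space whose dimension is $s$.

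The heart of the argument is therefore the chain of identifications
\[
\deg\Delta_\cC \;=\; 2s \;=\; 2\dim_{\CC}H^1(Z_6)_{\zeta_6} \;=\; \rk\MW(E_F/K),
\]
where the middle equality is the Libgober/Sakuma description of the Alexander polynomial of a cuspidal curve and the last is the Shioda-Tate computation above; one must check that the factor of $2$ matches correctly — on the Alexander side it comes from the degree-$2$ cyclotomic factor $t^2-t+1$, and on the Mordell-Weil side from the fact that the $j=0$ curve has complex multiplication by $\ZZ[\zeta_6]$, so the relevant eigenspaces come in complex-conjugate pairs $\zeta_6,\bar\zeta_6$ and the $\QQ$-rank of $\MW$ doubles the $\CC$-dimension of a single eigenspace. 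The main obstacle, and the step requiring the most care, is the bookkeeping in the threefold Shioda-Tate sequence: one must verify that the contributions of the zero section, of the components of the (generically multiple) fibers lying over the branch curve $\cC$ and over its singular points after resolution, and of classes pulled back from $\PP^2$, exactly exhaust the algebraic part of $H^4(\widetilde{E_F})$ orthogonal to the transcendental piece, so that no extra algebraic cycles inflate the Mordell-Weil rank beyond $2s$, and conversely that the resolution of the singularities of $\cC$ (nodes contribute nothing to the $\zeta_6$-eigenspace, cusps contribute the relevant ideal-sheaf conditions) does not destroy the identification of the eigenspace with the superabundance space $H^1(\PP^2,\mathcal J_{\Sing}(d-3-\tfrac d6))$. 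Controlling the local contributions of nodes versus cusps — i.e. that only the $(2,3,6)$-type singularities feed the order-$6$ character — is exactly where the hypothesis ``nodes and cusps only'' is used, and is what makes the clean equality possible.
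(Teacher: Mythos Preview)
Your identification $\deg\Delta_\cC = 2q = 2\dim_\CC H^{1,0}(Z_6)$ via the Libgober/Sakuma eigenspace picture is exactly what the paper uses on the Alexander side. But your route to $\rk\MW(E_F/K)$ is genuinely different from the paper's, and the part you flag as ``the main obstacle'' is in fact a real gap that the paper avoids entirely.

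The paper does \emph{not} compute $\rk\MW$ via a threefold Shioda--Tate formula or via $\NS(\widetilde{E_F})$. Instead it argues as follows. First, $W_F$ is shown (Lemma~\ref{equationforw}) to be birational to $(V_6\times E_0)/\mu_6$ for the diagonal action, so $\MW(W_F/\CC(x,y))\cong \MW(V_6\times E_0/\CC(V_6))^{\mu_6}$ by Galois descent (Theorem~\ref{irregularityMW}). Over $\CC(V_6)$ the threefold is \emph{split}, so its Mordell--Weil group modulo Chow trace is just $\Mor(V_6,E_0)/\text{constants}$, and by universality of the Albanese map this is $\Hom(\Alb(V_6),E_0)$. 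The key step is then Roan's decomposition theorem for abelian varieties with an automorphism (Theorem~\ref{roanth}): since the deck transformation acts on $H^{1,0}(V_6)$ with a single primitive sixth root of unity as eigenvalue (Lemma~\ref{albaneseeigenvalues}, using the local spectra of cusps), one obtains $\Alb(V_6)\cong E_0^{\,q}$, whence $\Hom(\Alb(V_6),E_0)=\ZZ[\omega_6]^q$. Finally every element of $\End(E_0)$ commutes with complex multiplication, so the $\mu_6$-invariants are the whole group, giving $\rk_\ZZ\MW(W_F)=2q$.

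Your Shioda--Tate proposal could in principle be pushed through, but to extract an \emph{equality} (rather than the upper bound the paper later uses in Theorem~\ref{boundcusps}) you would need to compute $\rho(\widetilde{E_F})$ exactly --- i.e.\ identify precisely which classes in $H^{1,1}(\widetilde{E_F})$ are algebraic --- and then match the trivial lattice on the nose. You have not supplied this computation, and over a surface base it is substantially harder than the Albanese argument: the fibral contributions over the singular locus of $\cC$ and over the line at infinity must be enumerated after an explicit resolution, and one must independently verify that no ``extra'' divisor classes appear. The paper's approach replaces all of this bookkeeping with the single structural fact $\Alb(V_6)\cong E_0^{\,q}$, which is what Roan's theorem buys. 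If you want to salvage your line of argument, the cleanest fix is precisely to pass to the split cover $V_6\times E_0$ first --- at which point you have essentially rediscovered the paper's proof.
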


The Mordell-Weil group here is the group of rational sections of the elliptic threefolds 
(see for instance~\cite{Langneron,Lang} and section~\ref{sec-mordellweil}
for further discussion).
The rank of the Mordell-Weil group of the threefold~\eqref{threefold-2}
was recently studied in~\cite{kloo} for the case $\deg \cC=6$
using different methods (cf. also~\cite{hulek}). These results follow 
immediately from the correspondence between Alexander polynomials 
and Mordell-Weil groups in this paper since the Alexander
polynomials of sextic curves considered in~\cite{kloo} are readily 
available.

On the other
hand, each element of the Mordell-Weil group of the aforementioned 
elliptic threefold defines a functional relation of the type~\eqref{threefold}.
This can be summarized as follows (cf. Theorem~\ref{cor-kulikov}):

\begin{theorem}
For any irreducible plane curve $\cC=\{F=0\}$ whose only singularities are nodes and cusps the
following statements are equivalent:
\begin{enumerate}
 \item
$\cC$ admits a quasi-toric relation of elliptic type $(2,3,6)$,
 \item
$\cC$ admits an infinite number of quasi-toric relations of elliptic type $(2,3,6)$,
 \item
$\Delta_{\cC,\e}(t)$ is not trivial
($\Delta_{\cC,\e}(t)\neq 1$) \footnote{for an irreducible curve,
there is only one choice of $\e$, up to sign, i.e.
$\Delta_{\cC,\e}$ is independent of it;
for other types of quasi-toric relations see section \ref{sec-examples}}.
\end{enumerate}

Moreover, the set of quasi-toric relations of $\cC$
$\{(f,g,h)\in \CC[x,y,z]^3\mid f^2+g^3+h^6F=0\}$ has a group structure and it is 
isomorphic to $\ZZ^{2q}$, where $\Delta_\cC(t)=(t^2-t+1)^q$. Also, $\cC$ admits 
an infinite number of primitive quasi-toric relations, unless $q=1$,
in which case $\cC$ only has one primitive quasi-toric relation.
\end{theorem}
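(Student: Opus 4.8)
The plan is to derive the theorem from the two previously stated theorems (the one identifying $\rk\MW$ with $\deg\Delta_{\cC,\e}$, and the one giving the equivalence of (1)--(3)) together with an explicit analysis of the group law on quasi-toric relations. First I would set up the group structure: given two relations $f_1^2+g_1^3+h_1^6F=0$ and $f_2^2+g_2^3+h_2^6F=0$, interpret each as a rational section $\s_i=(v_i,u_i)$ of the elliptic threefold $u^2+v^3=F(x,y,1)$ via $u_i=f_i/h_i^3$, $v_i=g_i/h_i^2$ (after dehomogenizing, $z=1$), and define the sum of the two relations as the relation attached to $\s_1\oplus\s_2$ in the Mordell--Weil group, after clearing denominators to return to a genuine polynomial identity. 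I would check that this is well defined on the set of triples $(f,g,h)$ up to the obvious scaling action and up to common factors, that it is associative and has the trivial relation as identity, and that inverses come from $u\mapsto -u$. This makes the set of quasi-toric relations (modulo scaling) a group abstractly isomorphic to $\MW$; combining with Theorem~\ref{theoremonWF} and the structure of $\Delta_\cC(t)=(t^2-t+1)^q$, whose reciprocal roots are primitive sixth roots of unity, the torsion of $\MW$ is controlled (no $6$-torsion sections survive for a curve with only nodes and cusps, since a torsion section would force a different local picture), so $\MW\cong\ZZ^{2q}$ and hence the group of relations is $\ZZ^{2q}$.

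The second half, about \emph{primitive} relations, requires unwinding the notion of primitivity: a relation $(f,g,h)$ is primitive when $h$ cannot be reduced, i.e.\ when $\gcd$-type conditions hold and the section is not a nontrivial multiple $n\cdot\s'$ in $\MW$ for $n\ge 2$ of another section with the \emph{same} or smaller polar divisor. I would argue that the polar divisor (the support of $h$) is determined by the section's behaviour over the components of $\cC$ and the base locus, so that multiplication by $n$ in $\MW$ typically increases the height and hence the degree of $h$; the precise statement is that for $q\ge 2$ the lattice $\ZZ^{2q}$ has infinitely many primitive vectors (vectors not of the form $n w$, $n\ge 2$), each giving a primitive quasi-toric relation, whereas for $q=1$ the rank-$2$ lattice $\ZZ^2$ is too small — I would pin down that in the rank-one-pair case the Néron--Tate height pairing forces all primitive relations to be $\pm$ a single generator pair, so up to the symmetries $(f,g,h)\mapsto(\pm f,\pm g,h)$ there is exactly one primitive quasi-toric relation. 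Here it is essential to use that $j\equiv 0$ so that $\mathrm{Aut}$ of the fiber is $\ZZ/6$, which accounts for the factor making the answer $\ZZ^{2q}$ rather than $\ZZ^q$: each "geometric" section contributes a $\ZZ^2$ worth of relations coming from the two square roots / the $\ZZ/6$-action rescaling $(u,v)$ by sixth roots of unity, matched on the polynomial side by the pair $(t^2-t+1)$.

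The main obstacle I expect is the bookkeeping that turns the abstract isomorphism $\text{\{relations\}}\cong\MW$ into the precise statement $\cong\ZZ^{2q}$ with the stated count of primitive relations: one must (a) rule out torsion in $\MW$ for curves with only nodes and cusps — i.e.\ show a nonzero torsion section would produce a singularity of $\cC$ not of nodal or cuspidal type, contradicting the hypothesis — and (b) handle the passage between rational sections and polynomial triples carefully, since clearing denominators after applying the group law can introduce common factors that must be stripped to land on a canonical representative; getting the bijection exactly right (rather than up to the scaling/$\ZZ/6$ ambiguity) is where the factor of $2$ in $\ZZ^{2q}$ is nailed down. Once the dictionary is set up cleanly, statements (1) $\Leftrightarrow$ (2) $\Leftrightarrow$ (3) are immediate: (3) is $q\ge 1$, which by the identification is $\rk\MW\ge 1$, hence infinitely many sections, hence infinitely many relations — and the $q=1$ case is the sole source of the ``unless'' clause, where finitely many (indeed one, up to symmetry) primitive relations occur even though infinitely many relations exist.
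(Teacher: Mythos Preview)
Your overall strategy for (1)$\Leftrightarrow$(2)$\Leftrightarrow$(3) and for the group structure matches the paper's: identify quasi-toric relations with rational sections of $W_F$, hence with elements of $\MW(W_F)$, and invoke the earlier results giving $\MW(W_F)\cong\ZZ[\omega_6]^q$ and $\deg\Delta_\cC=2q$. Your worry about torsion is misplaced: in the paper $\MW$ is by definition the quotient of the group of $\CC(x,y)$-points by torsion and Chow trace, so no separate argument ruling out torsion sections is needed.

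The genuine gap is in your treatment of primitivity. You take ``primitive'' to mean $\ZZ$-primitive in the lattice $\ZZ^{2q}$ (i.e.\ not of the form $n\cdot w$ for an integer $n\ge 2$), and then assert that for $q=1$ the lattice $\ZZ^2$ is ``too small''. But $\ZZ^2$ has infinitely many $\ZZ$-primitive vectors --- for instance $(1,n)$ for every $n$ --- so as written your argument does not distinguish $q=1$ from $q\ge 2$, and the appeal to the N\'eron--Tate height does not repair this. The symmetry group you invoke, $(f,g,h)\mapsto(\pm f,\pm g,h)$, is also too small: it is the full $\mu_6$-action $(u,v)\mapsto(\omega_6^{3k}u,\omega_6^{2k}v)$ on the fibre that is relevant.

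The correct notion, and the one the paper uses, is $\ZZ[\omega_6]$-primitivity. The paper defines an elliptic pencil $V_6\to E_0$ to be primitive when its generic fibre is irreducible, and then proves (via Stein factorisation through the orbifold quotient $\PP^1_{2,3,6}$ and the bound on multiple fibres in a primitive pencil of plane curves) that a pencil is non-primitive if and only if it factors through a non-injective endomorphism of $E_0$. Since $\End(E_0)=\ZZ[\omega_6]$, this says exactly that the corresponding element of $\MW(W_F)=\ZZ[\omega_6]^q$ is divisible by a non-unit of $\ZZ[\omega_6]$. For $q=1$ the $\ZZ[\omega_6]$-primitive elements of $\ZZ[\omega_6]$ are precisely the six units, which give a single quasi-toric relation up to the $\mu_6$-symmetry; for $q\ge 2$ there are infinitely many $\ZZ[\omega_6]$-primitive elements (e.g.\ $(1,\alpha)$ for any $\alpha\in\ZZ[\omega_6]$). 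That is where the dichotomy in the ``moreover'' clause comes from, and it genuinely requires the complex multiplication $\End(E_0)=\ZZ[\omega_6]$, not just $\ZZ$.
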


We also consider here other singularities which will define relations of the form:
\begin{equation}\label{relationquasitoric}
h_1^pF_1+h_2^qF_2+h_3^rF_3=0,
\end{equation}
where $(p,q,r)$ is either $(3,3,3)$ or $(2,4,4)$. Such relations 
\eqref{relationquasitoric} in turn correspond to orbifold rational pencils 
with respective orbifold structures (cf. theorem \ref{cor-kulikov-gen}).

While commonly the existence of irrational pencils on surfaces is obtained 
by an extension of the deFranchis method (from 1-form with vanishing wedge product, 
cf.~\cite{defrancis,sernesi}), rational orbifold pencils are obtained rather differently:
here the pencils are a byproduct of the splitting of 
Albanese varieties of cyclic 
multiple planes into a product of elliptic curves 
which we derive either from Roan's Decomposition Theorem
for abelian varieties with an automorphism (cf.~\cite{Lange-Roan})
or directly, using a Hodge theoretical refinement of the argument 
used in the proof of divisibility theorem in \cite{Duke}.
For example for the cyclic multiple planes branched over a reduced
curve with nodes and cusps as the only singularities, the Albanese variety 
splits as a product of elliptic curves with $j$-invariant equal to zero.
One may contrast this with the case of Jacobians of curves where 
the situation is different (cf.~\cite{serre2}).

In particular, the relation between Alexander polynomials and 
Mordell-Weil groups allows us to give bounds~\eqref{mainbound}
on the degree of Alexander polynomials. These follow
from the bounds on the rank of Mordell-Weil groups obtained
from the connection with the Mordell-Weil groups of certain elliptic surfaces
and from the Shioda-Tate formula (cf.~\cite{shioda}).
However, the correspondence between the Alexander polynomials and the ranks 
of the Mordell-Weil group should be of independent interest (cf.~\cite{kloo}).

The curves with the largest 
known values of $\deg \Delta_\cC(t)$ are given in section~\ref{sec-examples}.
The bound presented here is sharp for sextics, however, the inequality~\eqref{mainbound} 
is apparently far from being sharp in general. Perhaps a better understanding of 
the Mordell-Weil rank of~\eqref{threefold-2} can yield a better estimate.
Also, note that Corollary~\ref{inequality} provides a partial 
answer to~\cite[Problem 2.1]{problems}.

Another application of the results presented in this paper is an alternative
argument to confirm Oka's conjecture of sextic curves having a
non-trivial Alexander polynomial (i.e. that equations of such curves have form
$P^2+Q^3$). The answer to Oka's conjecture was first obtained by 
A.~Degtyarev (cf.~\cite{degtyarev-okas-conjecture-i,degtyarev-okas-conjecture-ii}).

For the sake of clarity we often start our discussions focusing on the case of 
irreducible curves having only nodes and cusps as singularities. 
The results, however, are obtained, as 
was already mentioned, for curves with a wider class of singularities,
which we call $\delta$-essential and $\delta$-partial
(cf.~\ref{deltapartial}). Moreover the results 
are applicable to reducible and non-reduced curves as well.
From the point of view of fundamental groups, non-reduced curves 
correspond to homomorphisms $\e$ that are more general than those
given by the linking number of loops with~$\cC$.

The condition 
of being $\delta$-essential is purely local, meaning that the local 
Alexander polynomial of the link of the singularity considered w.r.t.
the restriction of $\e$ on the local fundamental group 
is divisible by the cyclotomic polynomial of degree~$\delta$.
As we shall see, this is the natural class of curves
leading to the elliptic pencils.

\subsection{Organization of the paper}
\mbox{}

The content of the paper is as follows. In section~\ref{sec-settings} we give
definitions for Alexander polynomials w.r.t. any homomorphism $\e$ (as mentioned 
above), for orbifold surfaces and morphisms, and for quasi-toric relations.
In section~\ref{sec-mordellweil} 
we relate the Alexander polynomial to the Mordell-Weil group of 
the threefold~(\ref{threefold-2})
associated with $F$. In section~\ref{sec-qt-rels}, the $\CC(x,y)$-points 
of the threefolds~\eqref{threefold} 
are presented as quasi-toric relations of $F$, i.e. functional equations of 
the form $f^2+g^3+Fh^6=0$, over the ring $\CC[x,y,z]$. The correspondence 
between the points of~\eqref{threefold} and the quasi-toric relations where 
$F$ fits in, is explained at the end of section~\ref{sec-mordellweil}. 
In section~\ref{sec-delta-curves} we generalize the results to a larger class
of curves. Finally, in section~\ref{sec-examples} a list of explicit examples 
and applications is given as to show curves that fit into quasi-toric relations 
of all rational orbifolds of elliptic type. Also one of these examples provides the largest 
values known to date of the degree of the Alexander polynomial of an irreducible
curve with nodes and cusps.

\subsection{Notation}
\label{ssec-notation}
\mbox{}

$F$ a (possibly reducible or even non-reduced) non-zero homogeneous polynomial in 
$\CC[x,y,z]$ such that $F$ is not a power, that is $G^k=F$ for $G\in \CC[x,y,z]$ implies $k=1$.

$\cC$ the set of zeroes of $F$, hence a reduced curve. 

$V_6$ a non singular model of a cyclic multiple plane $z^n=F(x,y)$

$\bar V_6$ a model of cyclic multiple plane in $\PP^2 \times \PP^1$.

$W_F^{\circ}$ an affine model of the elliptic threefold corresponding to a curve 
$F=0$.

$W_F$ a smooth projective birational model of $W_F^{\circ}$.

$\tilde W_F$ singular projective model of $W_F$ (cf. Proposition~\ref{equationforw}).

$E_0$ the elliptic curve with j-invariant zero.

$\bar E_Q$ a model of $E_0$ in $\PP(2,3,1) \times \PP^1$.

$W$ a split elliptic threefold $V_6 \times E_0$.

$\bar W=\bar V_6 \times_{\PP^1} \bar E_Q$ a birational model 
of split elliptic threefold $V_6 \times E_0$

%

\section{Preliminaries}
\label{sec-settings}

In this section we will review several results on Alexander invariants 
which appear in the literature and extend them to the generality 
required in this paper.

\subsection{Alexander polynomial relative to a surjection of the fundamental group}
\label{defalpol}

We shall consider reducible, not necessarily reduced curves in $\PP^2$.
Let $\cC$ be a plane curve given as the set of zeroes of a homogeneous polynomial $F=F_1^{e_1}\cdots F_r^{e_r}$,
which is not a power (see Notation~\ref{ssec-notation}), which means $\gcd(\e_1,\dots,\e_r)=1$. 
Consider $\cC:=\cC_1 \cup \dots \cup \cC_r$ its decomposition into irreducible components and let 
$(\e_1,\dots,\e_r)$, $\e_i \in \ZZ_+$ be the collection of multiplicities of each irreducible factor $F_i$ in 
the equation of $\cC$. So if $d_i=\deg F_i$ denotes the degree of $\cC_i$, then the 
total degree of $\cC:=\{F=0\}$ is given by $d:=\sum \e_id_i=\deg F$.

Let $\cC_0$ be a line transversal to $\cC$ which we shall view as the line 
at infinity and let $G:=\pi_1(\PP^2\setminus \cC_0\cup \cC)$.
Recall that $H_1(\PP^2\setminus \cC_0\cup \cC)$ is a free 
abelian group with $r$ generators having  canonical identification 
with $\ZZ^r$
(cf.~\cite{charvar}). The isomorphism is given by mapping 
the class of the boundary of a small holomorphic 2-disk transversal to 
the component $\cC_i$ to 
$(0,..,1,...,0) \in \ZZ^r$ (with $1$ appearing as the $i$-th component).
Let $\e$ be the epimorphism 
$\e: G\to H_1(\PP^2\setminus \cC_0\cup \cC) \rightarrow \ZZ$, 
given by $\e(\gamma_i):=\e_i$. Let $\QQ[\ZZ]=\QQ[t,t^{-1}]$ 
denote the group ring over $\QQ$, $K_{\e}=\ker \e$, and 
$K_{\e}'=[K_{\e},K_{\e}]$ be the commutator of $K_{\e}$. By the Hurewicz Theorem
$K_{\e}/K_{\e}'$ can be identified with the homology of infinite cyclic cover 
of $\PP^2\setminus \cC_0\cup \cC$ corresponding to $\e$
and hence can be viewed as a module 
over the group ring $\ZZ[\ZZ]$.
 
Homomorphisms (resp. epimorphism) $\e: \pi_1(\PP^2\setminus \cC_0\cup \cC)\rightarrow \ZZ$ 
such that $\e(\gamma_i) \ge 1$ for all $i$, are in one-to-one correspondence with polynomials, 
(resp. non-power polynomials) considered up to a scalar factor, having $\cC$ as 
the zero set. Indeed if $F_i$ is an irreducible polynomial having $\cC_i$ as its 
set of zeros, then one defines the polynomial corresponding to $\e$ as 
\begin{equation}\label{productepsilon}
F_{\e}(x,y,1)=\Pi_i F_i(x,y,1)^{\e (\gamma_i)}. 
\end{equation}
Vice versa, given a polynomial $F$ having $\cC$ as
its zero set one defines the homomorphism
$\e_F:H_1(\PP^2 \setminus \cC_0\cup \cC) 
\rightarrow \ZZ$ using
\begin{equation}\label{definingepsilon}
\e_F (\gamma_i)={1 \over {2 \pi \sqrt{-1} }}
\int_{\gamma_i} {{dF}\over F}
\end{equation}
and extends it as the composition
$\pi_1(\PP^2 \setminus \cC_0\cup \cC) \rightarrow 
H_1(\PP^2 \setminus \cC_0\cup \cC) \rightarrow \ZZ$. 
This homomorphism is in fact an epimorphism if $F$ is not a power. It will also be denoted by $\e_F$.
 
\begin{dfn}(cf.~\cite{Duke,oka-survey}) 
The Alexander polynomial $\Delta_{\cC,\e}(t)$
of $\cC$ relative to a surjection $\e: G \to \ZZ$ is a generator of the order of the torsion
of the $\QQ[\ZZ]=\QQ[t,t^{-1}]$-module $K_{\e}/K_{\e}'\otimes \QQ$ normalized in such a way that 
it is a polynomial in $t$ satisfying $\Delta_{\cC,\e}(0)=1$.
By the discussion in the previous paragraph, a (not necessarily reduced) equation $F$ of $\cC$ 
defines both the set of zeroes $\cC$ and a surjection $\e_F$. Hence $\Delta_{F}(t)$ will
also denote $\Delta_{\cC,\e_F}(t)$.
\end{dfn}
 
The Alexander polynomial $\Delta_{\cC,\e}$ can be expressed in terms 
of characteristic varieties studied in~\cite{charvar} or in terms of $G'/G''$ 
viewed as a module over 
$\Lambda:=\ZZ[H_1]=\ZZ[t_1^{\pm 1},\dots,t_r^{\pm 1}]$ as follows.
The polynomial $\Delta_{\cC,\e}$ is the order of the torsion of 
$$
(G'/G'' \otimes \QQ) \otimes_{\Lambda}\Lambda/ 
{(t_1-t^{\e(\gamma_1)},\dots,
t_r-t^{\e(\gamma_r)})} 
$$
up to a power of $(t-1)$, viewed as a $\QQ[t,t^{-1}]$-module in the obvious way.
The zeroes of $\Delta_{\cC,\e}$ can also be seen as the intersection of 
the characteristic variety $\Sigma_1(\cC)$ with the 1-dimensional torus of equation 
$L_\e:=\{(t^{\e(\gamma_1)},\dots,t^{\e(\gamma_r)})\}
\subset (\CC^*)^r$ (cf.~\cite[Theorem 2.26]{kike-hiro-survey}).
\footnote{Though most often the Alexander polynomials are considered 
in the case when $\e_i=1$, case $\e_i \ne 1$ was considered for example by 
Oka in~\cite[\S4]{oka-survey} as \emph{$\theta$-Alexander polynomials}.}
 
We shall also need the local version of the polynomials 
$\Delta_{\cC,\e}$ defined similarly.
Let $P$ be a singular point of $\cC$.
The epimorphism $\e: G \rightarrow \ZZ$ induces a homomorphism 
$\e_P$ of the local fundamental group of $\cC$ to $\ZZ$ 
and hence the Alexander polynomial of the link of $P$ 
w.r.t. the homomorphism $\e_P$
\footnote{If the image of $\e_P$ has index $k$ in $\ZZ$, then
the Alexander polynomial $\Delta_{\cC,\e,P}(t)$ is $\Delta(t^k)$ where $\Delta$ is 
the Alexander polynomial relative to the surjection $G \ \rightmap{\e_P}\ \e_P(G)=\ZZ$.
Recall that this represents the order of the 1-dimensional homology of the cyclic 
cover corresponding to $\e$ which is this case has $k$ 
connected components cf.~\cite{Duke,charvar}}. 
In other words, if 
$i:\SSS_P\setminus \cC\hookrightarrow \PP^2\setminus \cC$ is the inclusion
from a sufficiently small sphere around $P$ in the total space and 
$\gamma$ is a meridian around a component of the link, then
\begin{equation}
\label{eq-defep} 
\e_P(\gamma):=\e(i_*(\gamma)).
\end{equation}
These polynomials will be denoted by $\Delta_{\cC,\e,P}(t)$.
We have the following:
 
\begin{prop}
\label{prop-divisibility}
Let $\cC$ be a plane curve and $(\e_1,\dots,\e_r)$ denote the multiplicities of 
its irreducible. Then $\Delta_{\cC,\e}(t)$ divides the product of the local Alexander polynomials:
\begin{equation}\label{firstdiv}
\Delta_{\cC,\e}(t) \vert 
\prod_P \Delta_{\cC,\e,P}(t)
\prod (t^{\e_i}-1)^{k_i}.
\end{equation}
In particular, if the local Alexander polynomials have only roots of 
unity of degree $\delta$ as their roots, then:
$$\Delta_{\cC,\e}(t)=
\prod_{\lambda\vert \delta} \varphi_\lambda(t)^{s_\lambda}\prod (t^{\e_i}-1)^{k_i},$$
where $\varphi_\lambda(t)$ is the cyclotomic polynomial of the $\lambda$-th 
roots of unity. Moreover, if $s_\lambda>0$, then 
\begin{equation}\label{seconddiv}
\lambda \vert d,
\end{equation}
where $d:=\sum d_i\e_i$ is the total degree.
\end{prop}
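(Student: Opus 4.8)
The plan is to deduce the divisibility \eqref{firstdiv} from the method of \cite{Duke}, run with the surjection $\e$ in place of the total linking number homomorphism, and then to obtain the cyclotomic factorization and \eqref{seconddiv} by combining \eqref{firstdiv} with the behaviour of $\cC$ along the line at infinity. Throughout, put $X:=\PP^2\setminus(\cC_0\cup\cC)=\CC^2\setminus\cC_a$ with $\cC_a:=\cC\cap\CC^2$, and let $\widetilde X\to X$ be the infinite cyclic cover attached to $\e$, so that $\Delta_{\cC,\e}(t)$ is, up to a power of $(t-1)$ that will be harmless on the right hand side of \eqref{firstdiv}, the order of the torsion of the $\QQ[t,t^{-1}]$-module $H_1(\widetilde X;\QQ)$.

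\emph{Step 1 (the divisibility \eqref{firstdiv}).} The key finiteness input is that, since $\cC_0$ is a line, $X$ is an affine variety and hence has the homotopy type of a finite $2$-complex, while $\widetilde X=\{(x,y,w)\in\CC^{3}:e^{w}=F_\e(x,y,1)\}$ is a closed complex submanifold of $\CC^{3}$, hence a Stein surface, and so it too has the homotopy type of a $2$-complex; in particular $H_j(\widetilde X;\QQ)=0$ for $j\ge 3$. Following \cite{Duke}, I would take a regular neighbourhood $T$ of $\cC_0\cup\cC$, so that $X$ deformation retracts onto the compact $4$-manifold $X':=\PP^2\setminus T^{\circ}$ with $\partial X'=\partial T$, and use the vanishing above to express $H_1(\widetilde X;\QQ)$ over $\QQ[t,t^{-1}]$ in terms of the $\e$-cyclic covers of the pieces of $\partial T$, glued along tori. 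These pieces are: the link complements of the singular points $P$ of $\cC$, whose cyclic covers (for the restriction of $\e$) have torsion of order $\Delta_{\cC,\e,P}(t)$ by definition; the link complements of the transverse points of $\cC_i\cap\cC_0$ together with the tubular boundaries along the smooth loci $\cC_i^{\circ}$, whose covers have torsion annihilated by $t^{\e_i}-1$; and the tubular boundary along $\cC_0^{\circ}$, whose cover has torsion annihilated by $t^{d}-1$ (the meridian $\gamma_0$ of $\cC_0$ satisfies $\e(\gamma_0)=-\sum d_i\e_i=-d$). Tracking these local modules, together with the free classes and $(t-1)$-torsion coming from the connecting tori, and using $t^{d}-1=\prod_{\mu\mid d}\varphi_\mu(t)$, yields \eqref{firstdiv}.

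\emph{Step 2 (the factorization and \eqref{seconddiv}).} If every root of every $\Delta_{\cC,\e,P}(t)$ is a root of $t^{\delta}-1$, then by \eqref{firstdiv} every root of $\Delta_{\cC,\e}(t)$ is either a primitive $\lambda$-th root of unity with $\lambda\mid\delta$, or a root of some $t^{\e_i}-1$, or a root of $t^{d}-1$; grouping the last two kinds into $\prod_i(t^{\e_i}-1)^{k_i}$ and the rest into $\prod_{\lambda\mid\delta}\varphi_\lambda(t)^{s_\lambda}$ gives the displayed equality. For the final assertion I would use that $\cC_0$ is transverse to $\cC$: then $\cC_a$ is transverse to all large spheres $S_R\subset\CC^2$, so $X$ retracts onto $X\cap B_R$, whose boundary is the complement of the link of $\cC_a$ at infinity, namely the link at the origin of the homogeneous form $F_\e^{\mathrm{top}}$ of degree $d$. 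Hence the monodromy at infinity of $F_\e\colon\CC^2\to\CC$ has order $d$, so the Alexander polynomial at infinity $\Delta_{\cC,\e,\infty}(t)$ has only $d$-th roots of unity as roots, and the finiteness argument of Step 1 (now with $B_R$ in place of $T$) shows $\Delta_{\cC,\e}(t)$ divides $\Delta_{\cC,\e,\infty}(t)$ up to a power of $(t-1)$, as in the divisibility-at-infinity part of \cite{Duke}. Therefore every root of $\Delta_{\cC,\e}(t)$ is a $d$-th root of unity, so $s_\lambda>0$ forces $\lambda\mid d$, which is \eqref{seconddiv}.

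I expect the main obstacle to be the module-theoretic bookkeeping of Step 1: $H_1(\widetilde X;\QQ)$ is only a quotient of the $H_1$ of the $\e$-cyclic cover of $\partial T$, and a surjection of $\QQ[t,t^{-1}]$-modules need not divide orders of torsion submodules, so one must — as in \cite{Duke} — exploit the specific cell structure of $X'$ relative to $\partial X'$ to see that the quotient neither enlarges the torsion nor creates new torsion beyond controlled powers of $(t-1)$, and to check that the only factors not accounted for by the $\varphi_\lambda$ with $\lambda\mid\delta$ are the powers of $t^{\e_i}-1$. Granting this, Step 2 is formal.
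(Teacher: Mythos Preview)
Your approach is essentially the same as the paper's: both obtain \eqref{firstdiv} by passing to a tubular neighborhood, using the surjection onto $\pi_1(\PP^2\setminus\cC_0\cup\cC)$, and splitting the $\e$-cyclic cover of the neighborhood via Mayer--Vietoris into the local singularity pieces and the $\CC^*$-bundles over the smooth loci of the $\cC_i$; both obtain \eqref{seconddiv} from divisibility at infinity, computing the Alexander polynomial of the link of $\cC$ along $\cC_0$.

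Two small differences in implementation are worth noting. First, the paper takes the neighborhood of $\cC$ alone inside $\PP^2\setminus\cC_0$ and invokes the Lefschetz Hyperplane Section Theorem for the surjection, whereas you take $T$ to be a neighborhood of $\cC_0\cup\cC$ and appeal to Stein vanishing; the paper's choice keeps the contribution of $\cC_0$ out of Step~1 entirely, so the right-hand side of \eqref{firstdiv} arises without the extra $(t^d-1)$ factor you introduce and then have to dispose of. Second, for the link at infinity the paper computes the Alexander polynomial explicitly as $(t^{\sum\e_id_i}-1)^{r-1}$ via the Torres relation applied to the Hopf link with multivariable polynomial $(t_1\cdots t_r-1)^{r-1}$, rather than via the monodromy of the homogeneous form; the conclusions are the same. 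Your acknowledged obstacle---the module-theoretic bookkeeping ensuring the surjection does not create spurious torsion---is exactly what both arguments defer to \cite{Duke} for.
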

 
\begin{proof} 
Details of the arguments are similar to those used in the proof of 
the Divisibility Theorem (cf.~\cite{Duke,position} and Lemma~\ref{albaneseeigenvalues} below).
The starting point is the surjection of the fundamental group of a regular neighborhood 
of $\cC$ in $\PP^2\setminus \cC_0$ onto $\pi_1(\PP^2\setminus \cC_0 \cup \cC)$ (which is 
a consequence of the Lefschetz Hyperplane Section Theorem).
On the other hand, one uses the Mayer-Vietoris sequence for the 
$\e$-cyclic cover of this neighborhood in order to split it into a union of cyclic 
covers: those of the local singularities and the neighborhood of the non-singular 
part of $\cC$. This shows that the Alexander polynomial of the neighborhood 
of $\cC$ is equal to the 
product of the Alexander polynomials of singularities and a divisor of second product in 
\eqref{firstdiv}. These divisors come as contributions of $H_0$ and $H_1$ of terms of the 
Mayer-Vietoris sequence corresponding to the intersections of the complements to the links 
of the singularities with the mentioned $\CC^*$-bundle of the non-singular part of $\cC$. 
This yields the divisibility~\eqref{firstdiv}.
 
The second divisibility relation is a generalization of the 
divisibility at infinity (cf.~\cite{Duke}) and follows from the 
calculation of the Alexander polynomial of the link 
at infinity with multiplicities. It is equal to 
$(t^{\sum \e_id_i}-1)^{r-1}$
as a consequence of the Torres relation (cf.~\cite{turaev}) applied 
to the Hopf link with multivariable Alexander polynomial 
$(t_1 \cdot ... \cdot t_r-1)^{r-1}$.
\end{proof}
 
\subsection{Orbifold curves}\label{orbifoldcurves}
Now we recall some basic definitions needed here 
referring for more details to (see~\cite{ACM-multiple-fibers}).
 
\begin{dfn}
\label{def-orb}
An \emph{orbifold} curve $S_{\bar m}$ is (an open or closed) 
 Riemann surface~$S$
with a function $\bar m:S\to\NN$ whose value is~$1$ outside a finite number of points.
A point $P\in S$ for which $\bar m(P)>1$ is called an \emph{orbifold point}.
\end{dfn}
 
One may think of a neighborhood of a point $P\in S_{\bar m}$ with $\bar m(P)=d$ as the quotient of a disk 
(centered at~$P$) by a rotation of angle $\frac{2\pi}{d}$. A loop around $P$ is considered to be 
trivial in $S_{\bar m}$ if its lifting bounds a disk. Following this idea, orbifold fundamental groups
can be defined as follows.
 
\begin{dfn}\label{dfn-group-orb}
For an orbifold $S_{\bar m}$, let $P_1,\dots,P_n$ be the orbifold points, 
$m_j:=\bar m(P_j)>1$. Then, the \emph{orbifold fundamental group} of $S_{\bar m}$ is
\begin{equation}
\label{eq-orb-fg}
\pi_1^{\text{\rm orb}}(S_{\bar m}):=\pi_1(S\setminus\{P_1,\dots,P_n\})/\langle\mu_j^{m_j}=1\rangle,
\end{equation}
where $\mu_j$ is a meridian of $P_j$. We will denote $S_{\bar m}$ simply by $S_{m_1,\dots,m_n}$.
\end{dfn}
 
\begin{remark}
In this paper, we will mostly consider orbifold groups of $\PP^1$ with three
orbifold points. The groups 
$\pi_1^{\text{orb}}(\PP^1_{(p,q,r)})=\langle x,y,z: x^p=y^q=z^r=xyz=1 \rangle$ 
are subgroups of index two of full triangle groups. In particular, they can 
be identified with the orientation-preserving isometries of a plane tiled with 
triangles with angles $\frac{\pi}{p}$, $\frac{\pi}{q}$, and $\frac{\pi}{r}$ 
(cf.~\cite[Corollary 2.5]{milnor-brieskorn-manifolds}).
\end{remark}
 
\begin{dfn}
A dominant algebraic morphism $\varphi:X\to S$ defines an \emph{orbifold morphism} 
$X\to S_{\bar m}$ if for all $P\in S$, the divisor $\varphi^*(P)$ is a 
$\bar m(P)$-multiple.
\end{dfn}
 
One has the following result regarding orbifold morphisms.
 
\begin{prop}[{\cite[Proposition~1.5]{ACM-multiple-fibers}}]
\label{prop-orb}
Let $\rho:X\to S$ define an \emph{orbifold morphism} $X\to S_{\bar m}$. Then $\varphi$ induces a morphism
$\varphi_*:\pi_1(X)\to\pi_1^{\text{\rm orb}}(S_{\bar m})$. Moreover, if the generic fiber is connected, then
$\varphi_*$ is surjective.
\end{prop}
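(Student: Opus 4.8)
The plan is to prove Proposition~\ref{prop-orb} by producing, for each loop $\gamma$ in $X$, a well-defined element of $\pi_1^{\text{\rm orb}}(S_{\bar m})$, and then checking functoriality and (under the connectedness hypothesis) surjectivity. First I would restrict attention to the Zariski-open set $S^{\circ}:=S\setminus\{P_1,\dots,P_n\}$ over which $\varphi$ is free of orbifold points, and set $X^{\circ}:=\varphi^{-1}(S^{\circ})$. The inclusion $X^{\circ}\hookrightarrow X$ induces a map $\pi_1(X^{\circ})\to\pi_1(X)$ which is surjective, since $X\setminus X^{\circ}$ is a proper closed (hence complex-codimension $\ge 1$) subvariety; I would want to arrange things so that this surjection together with the ordinary pushforward $(\varphi|_{X^{\circ}})_*:\pi_1(X^{\circ})\to\pi_1(S^{\circ})$ descends to the quotient $\pi_1^{\text{\rm orb}}(S_{\bar m})=\pi_1(S^{\circ})/\langle\mu_j^{m_j}\rangle$. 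Concretely, the claim reduces to: for each orbifold point $P_j$, the composite $\pi_1(X^{\circ})\to\pi_1(S^{\circ})\to\pi_1^{\text{\rm orb}}(S_{\bar m})$ kills the normal subgroup generated by $\mu_j^{m_j}$, and the resulting map factors through $\pi_1(X)$.

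The key local computation is the one that uses the hypothesis that $\varphi^*(P_j)$ is an $m_j$-multiple divisor. Choosing a small disk $\Delta_j\subset S$ around $P_j$ with local coordinate $s$ vanishing at $P_j$, the divisor $\varphi^*(s)$ has all multiplicities divisible by $m_j$, so along a suitable small analytic arc (or after normalizing a component of $\varphi^{-1}(\Delta_j)$) the pullback of $s$ is, up to a unit, an $m_j$-th power of a local function $u$ on $X$. A meridian $\mu_j$ of $P_j$ in $S^{\circ}$ pulls back under $\varphi$ to a loop whose image in $H_1$, measured by the winding of $u^{m_j}$, is $m_j$ times the winding of $u$; hence any loop in $X^{\circ}$ that maps to $\mu_j$ under $\varphi$ is freely homotopic in $X^{\circ}$ to an $m_j$-th power of a small loop which bounds a disk in $X$ (the $u=0$ locus meets $X$, not just $X^{\circ}$). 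Pushing forward, $\mu_j^{m_j}$ lies in the image of $(\varphi|_{X^{\circ}})_*$ up to conjugation, and moreover in the kernel of $\pi_1(X^{\circ})\to\pi_1(X)$ — this is exactly what is needed for the two factorizations. I would cite~\cite{ACM-multiple-fibers} here since this is their Proposition~1.5, but I would still want to spell out this local model, as it is the crux.

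Given the local analysis, the global argument is a diagram chase. We have a commutative square with surjections $\pi_1(X^{\circ})\surj\pi_1(X)$ and $\pi_1(X^{\circ})\to\pi_1(S^{\circ})\surj\pi_1^{\text{\rm orb}}(S_{\bar m})$; the local computation shows $\ker\big(\pi_1(X^{\circ})\surj\pi_1(X)\big)$ maps into $\langle\!\langle\mu_j^{m_j}\rangle\!\rangle\subset\pi_1(S^{\circ})$, so the composite $\pi_1(X^{\circ})\to\pi_1^{\text{\rm orb}}(S_{\bar m})$ descends to a well-defined $\varphi_*:\pi_1(X)\to\pi_1^{\text{\rm orb}}(S_{\bar m})$. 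For surjectivity when the generic fiber is connected: the image of $(\varphi|_{X^{\circ}})_*$ is a subgroup of $\pi_1(S^{\circ})$ whose cokernel is controlled by the monodromy of the fibration, and connectedness of the generic fiber forces the image to contain a finite-index subgroup that already surjects onto $\pi_1^{\text{\rm orb}}(S_{\bar m})$; more directly, for a generic (connected, hence path-connected) fiber the long exact sequence of the fibration $X^{\circ}\to S^{\circ}$ over the locus of good fibers gives right-exactness $\pi_1(X^{\circ})\to\pi_1(S^{\circ})\to \pi_0(\text{fiber})=1$, so $(\varphi|_{X^{\circ}})_*$ is onto, and hence so is its composition with $\pi_1(S^{\circ})\surj\pi_1^{\text{\rm orb}}(S_{\bar m})$.

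The main obstacle I anticipate is the local multiple-fiber computation: making precise, for a not-necessarily-smooth $X$ and a fiber whose components have various multiplicities all divisible by $m_j$, the claim that a loop mapping to $\mu_j$ becomes an $m_j$-th power of a loop bounding a disk \emph{in $X$}. One has to handle the normalization of the relevant fiber component, track the effect of the units appearing in $\varphi^*(s)=(\text{unit})\cdot u^{m_j}$, and verify that the "capping disk" genuinely lies in $X$ rather than in the smooth locus only — which is what distinguishes the orbifold relation from an ordinary relation in $\pi_1(X^{\circ})$. Everything else (the two surjectivity statements, the functoriality of $\varphi_*$) is then formal.
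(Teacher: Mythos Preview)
The paper does not contain a proof of this proposition: it is quoted verbatim as \cite[Proposition~1.5]{ACM-multiple-fibers} and used as a black box, so there is nothing in the present paper to compare your argument against.

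That said, your approach is the standard and correct one, and is essentially what one finds in the cited reference. One point deserves tightening. In your local analysis you phrase things as ``any loop in $X^{\circ}$ that maps to $\mu_j$ under $\varphi$ is freely homotopic in $X^{\circ}$ to an $m_j$-th power\ldots''; this is the wrong direction and may even be vacuous, since $\mu_j$ itself need not lie in the image of $(\varphi|_{X^{\circ}})_*$. What you actually need (and what you correctly state in the following paragraph) is that $\ker\big(\pi_1(X^{\circ})\to\pi_1(X)\big)$ is normally generated by meridians $\nu$ around the codimension-one components $D$ of $X\setminus X^{\circ}$, and that each such $\nu$ maps to a conjugate of $\mu_j^{a_D}$ where $a_D$ is the multiplicity of $D$ in $\varphi^*(P_j)$; the orbifold hypothesis $m_j\mid a_D$ then kills $\varphi_*(\nu)$ in $\pi_1^{\text{\rm orb}}(S_{\bar m})$. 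Once you rewrite the local step this way, your diagram chase and your surjectivity argument via the homotopy exact sequence over the locus of smooth fibers go through without change.
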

 
Proposition~\ref{prop-orb} will be applied systematically throughout this paper.
We will show a typical example of this. Suppose $F=F_1F_2F_3$ fits in a functional equation of type 
\begin{equation}
\label{eq-qtoric} 
h_1^3F_1+h_2^2F_2+h_3^6F_3=0,
\end{equation} 
where $h_1, h_2$, and $h_3$ are polynomials. 
Note that $F_i$ are not necessarily irreducible or reduced. 
Also note that~\eqref{eq-qtoric} induces a pencil 
map $\varphi:\PP^2\dasharrow \PP^1$ given by $\varphi([x:y:z])=[h_1^3F_1:h_2^2F_2]$.
Consider $\cC:=\{F=0\}$. Since $\varphi|_{\PP^2\setminus \cC}$ has three multiple
fibers (over $P_3=[0:1]$, $P_2=[1:0]$, and $P_6=[1:-1]$) one has an orbifold morphism 
$\varphi_{2,3,6}:\PP^2\setminus \cC\to \PP^1_{2,3,6}$. 
In particular, if the pencil $\varphi$ is primitive
(in the sense that it coincides with its Stein factorization) 
then by Proposition~\ref{prop-orb}, there is an epimorphism 
$$\varphi_{2,3,6}:\pi_1(\PP^2\setminus \cC)\to \pi_1^{\text{\rm orb}}(\PP^1_{2,3,6})= 
\frac{\mu_2\ZZ_2*\mu_3\ZZ_3}{(\mu_2\mu_3)^6},$$ 
where $\mu_i$ are as in Definition \ref{dfn-group-orb} and
$\mu_2\mu_3=\mu_6$ according to~\eqref{eq-orb-fg}. Finally, note that 
$V:=\Char\left(\frac{\mu_2\ZZ_2*\mu_3\ZZ_3}{(\mu_2\mu_3)^6}\right)= \{\omega_6, \omega_6^{-1}\}$, 
and the elements $\{\omega_6^{\pm 1},\omega_6^{\pm 2},\omega_6^{\pm 3}\}$ are roots 
of $\Delta_{\cC,\e}(t)$, where
\begin{equation} 
\label{eq-e-qt} 
\e:H_1(\PP^2\setminus \cC_0 \cup \cC) \to
\frac{\pi_1^{\text{\rm orb}}(\PP^1_{2,3,6})} 
{[\pi_1^{\text{\rm orb}}(\PP^1_{2,3,6}),\pi_1^{\text{\rm orb}}(\PP^1_{2,3,6})]}=\ZZ_6 
\end{equation} 
is induced by $\varphi_{2,3,6}$ on the abelianizations of the groups. 
Therefore one has that $\Delta_{\cC,\e}(t)$ is of the form 
$$(t-1)^{s_1}(t+1)^{s_2}(t^2+t+1)^{s_3}(t^2-t+1)^{s_6}p(t),$$ 
where $p(t)$ has no 6-th roots of unity as zeroes and $s_i$ are
non-negative integers. 
 
Using this technique one can show the following result 
needed in the sequel and which we shall prove for completeness 
(cf. \cite{friedman-morgan-smooth}, Ch 2, Theorem 2.3). 
 
\begin{prop}
\label{prop-bound-pencil}
The number of multiple members in a primitive pencil of plane curves (with no base components) 
is at most two.
\end{prop}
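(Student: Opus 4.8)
The plan is to reduce the statement to a numerical constraint on the Euler characteristics of the fibers of a pencil, via the orbifold-morphism machinery just developed. Let $\varphi:\PP^2\dasharrow\PP^1$ be a primitive pencil of plane curves with no base components, and suppose for contradiction it has three (or more) multiple members $m_1D_1$, $m_2D_2$, $m_3D_3$ with $m_i\ge 2$. Let $U\subset\PP^2$ be the complement of the union $D_1\cup D_2\cup D_3$ of the supports of these multiple fibers. Because the pencil is primitive (coincides with its Stein factorization) and base-point-free on $U$, restriction of $\varphi$ defines a fibration $U\to \PP^1\setminus\{P_1,P_2,P_3\}$; moreover, since the fibers over $P_i$ are $m_i$-multiple, this extends to an orbifold morphism $U\to S_{m_1,m_2,m_3}$ where $S=\PP^1$. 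By Proposition~\ref{prop-orb} the induced map $\varphi_*:\pi_1(U)\to\pi_1^{\text{\rm orb}}(\PP^1_{m_1,m_2,m_3})$ is surjective.

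First I would handle the euclidean and hyperbolic cases together. If $\sum 1/m_i\le 1$ then $\pi_1^{\text{\rm orb}}(\PP^1_{m_1,m_2,m_3})$ is infinite (it acts properly discontinuously by orientation-preserving isometries on the euclidean or hyperbolic plane, by the remark following Definition~\ref{dfn-group-orb}), and in the hyperbolic case it is moreover non-amenable / contains a free subgroup. But $U=\PP^2\setminus(D_1\cup D_2\cup D_3)$ is a smooth quasi-projective \emph{surface}, and I would use the fact that $\pi_1$ of a smooth quasi-projective surface that surjects onto an infinite orbifold group of a curve forces that orbifold to be of non-positive Euler characteristic \emph{and} forces $\varphi$ itself to be (a reparametrization of) the orbifold map — here the contradiction is more elementary: a primitive pencil of plane curves has generic fiber an irreducible curve, and the generic fiber $f$ maps into the \emph{kernel} of $\varphi_*$, but in fact $\pi_1(U)$ is generated by $\pi_1(\text{generic fiber})$ together with a loop in the base, which, together with the relations $\mu_i^{m_i}=1$, cannot surject onto an infinite group unless the base contributes infinitely — and $\PP^1$ minus three points already has that property, so this route needs care.

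Because of that subtlety, the cleaner approach — and the one I would actually carry out — is Euler-characteristic bookkeeping, which also dispatches the remaining "spherical" cases $(\sum 1/m_i>1)$. Let $f$ denote a smooth generic fiber, a plane curve of degree $n=\deg\varphi$ with $\chi(f)=\chi_{\text{gen}}$. Blowing up the base points of the pencil gives a genuine fibration $\pi:X\to\PP^1$, and the multiplicative property of Euler characteristics for fibrations over $\PP^1$ reads
\[
\chi(X)=2\,\chi_{\text{gen}}+\sum_{P\in\PP^1}\bigl(\chi(\pi^{-1}(P))-\chi_{\text{gen}}\bigr).
\]
For a fiber of multiplicity $m$ supported on a connected curve, $\chi(\pi^{-1}(P))-\chi_{\text{gen}}\ge (1-\tfrac1m)\,(-\chi_{\text{gen}})$ after accounting for the branching — more precisely I would invoke the orbifold Euler-characteristic inequality $\chi(f)\le \chi^{\text{orb}}(\PP^1_{m_1,m_2,m_3})\cdot(\text{something})$, i.e. compare $\chi$ of the generic fiber with the orbifold Euler number $2-\sum(1-1/m_i)$ of the base. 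The key point: a primitive pencil on $\PP^2$ has $\chi_{\text{gen}}<0$ except for pencils of lines or conics, which have no multiple members at all, and for $\chi_{\text{gen}}<0$ the inequality coming from the fibration relation combined with three multiple fibers of multiplicities $\ge 2$ violates the obvious bound $\chi(X)\le$ (something controlled by the blow-up of $\PP^2$, namely $\chi(X)=3+\#\text{base points with multiplicity}$, all of which are finite and bounded by Bézout). Assembling these inequalities yields $3\le 2$-type contradiction.

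The main obstacle I anticipate is making the branched-fiber Euler-characteristic estimate precise when the multiple fibers are reducible or non-reduced (the $F_i$ need not be irreducible) — one must be sure that "multiplicity $m_i$ fiber" contributes at least the euclidean-defect $1-1/m_i$ to the total, and that the three separate defects genuinely add. I would isolate this as a lemma: for a relatively minimal elliptic-or-higher-genus fibration $X\to\PP^1$ with a fiber of multiplicity $m$, the local contribution to $\chi(X)-2\chi_{\text{gen}}$ is at least $(1-1/m)(-\chi_{\text{gen}})$ with equality only in the semistable/multiple-smooth case. Granting that lemma, the count closes: three multiple fibers would force $-\chi_{\text{gen}}\bigl(\sum_{i=1}^3(1-1/m_i)\bigr)\le \chi(X)-2\chi_{\text{gen}}$, and since $\sum(1-1/m_i)\ge 3/2$ always (each term $\ge 1/2$), while independently $\chi(X)-2\chi_{\text{gen}}$ is pinned down by the geometry of the pencil on $\PP^2$ to be strictly smaller, we get the desired contradiction, proving that at most two multiple members can occur.
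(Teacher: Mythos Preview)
Your first instinct---use Proposition~\ref{prop-orb} to get a surjection of $\pi_1$ onto the triangle group---is exactly the paper's approach, but you set it up incorrectly by removing the divisors $D_1\cup D_2\cup D_3$. That makes $\pi_1(U)$ large and the argument stalls, as you noticed. The fix is to remove only the \emph{base points} of the pencil: on $X=\PP^2\setminus\{p_1,\dots,p_n\}$ the map $\varphi$ is already a regular morphism to $\PP^1$, the fiber over $P_i$ is still the $m_i$-multiple divisor $m_iD_i$ (minus the base points), so $\varphi$ is an orbifold morphism $X\to\PP^1_{m_1,m_2,m_3}$. Primitivity gives connected generic fiber, hence a surjection
\[
\pi_1\bigl(\PP^2\setminus\{p_1,\dots,p_n\}\bigr)\;\twoheadrightarrow\;\pi_1^{\text{orb}}(\PP^1_{m_1,m_2,m_3}).
\]
But removing finitely many points (real codimension $4$) from $\PP^2$ leaves $\pi_1$ trivial, while the triangle group with all $m_i\ge 2$ is never trivial. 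That is the whole proof.

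Your Euler-characteristic route, by contrast, does not close. After blowing up base points you get $\chi(X)-2\chi_{\text{gen}}=3(n-1)^2$ for a generic pencil of degree~$n$, and three multiple fibers of multiplicity~$2$ contribute only $\tfrac{3}{2}(n^2-3n)$ by your own estimate; the inequality $\tfrac{3}{2}(n^2-3n)>3(n-1)^2$ fails for every $n$. The point is that the obstruction to many multiple fibers is \emph{not} numerical---as Remark~\ref{rem-elliptic} and the remark following the proposition note, fibrations over $\PP^1$ on other surfaces can have arbitrarily many multiple fibers via logarithmic transforms. What singles out $\PP^2$ is its simple connectivity, and that is precisely what the base-points-only argument exploits.
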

 
\begin{proof}
Let us assume that the pencil is generated by two multiple fibers, that is, 
$\varphi:\PP^2\dasharrow \PP^1$, given by $\varphi([x:y:z])=[f^p:g^q]$, where $(p,q)=1$ 
(otherwise, the pencil is not primitive). Assume there is a third multiple member, that is, 
$f^p+g^q+h^r=0$.
 
According to Proposition~\ref{prop-orb}, one obtains an orbifold morphism with connected 
fibers, and thus an epimorphism 
$\varphi_*:\pi_1(\PP^2\setminus \{p_1,...,p_n\})\to \pi_1^{\text{\rm orb}}(\PP^1_{p,q,r})$ 
i.e. both groups are trivial.
The groups $\pi_1^{\text{\rm orb}}(\PP^1_{p,q,r})$ are virtually torsion 
free (cf.~\cite[Theorem 2.7]{milnor-brieskorn-manifolds}) and they are 
subgroups of order two of the Schwartz group, which is infinite
(cf.~\cite[Corollary 2.4]{milnor-brieskorn-manifolds}).
In particular, they are non-trivial, which contradicts the
triviality of $\pi_1(\PP^2\setminus \{p_1,...,p_n\})$.
\end{proof}
 
\begin{remark}
For pencils other than pencils of plane curves,
using logarithmic transforms, one can obtain
elliptic fibrations with any number of multiple fibers
(cf.~\cite{friedman-morgan-smooth,kodaira-structure-i}).
\end{remark}
 
In this paper we are interested in a particular type of orbifold morphisms.
 
\begin{dfn} 
We say a 2-dimensional orbifold $S_{\bar m}$ is a 
\emph{rational orbifold curve of elliptic type} if $e_{\text{top}}(S_{\bar m})=2$ 
(that is $S_{\bar m}$ is a compact Riemann sphere) and 
$$\sum \frac{1}{m_i}=n-2,$$
where $n:=\#\{P\in S \mid \bar m(P)>1\}$ is the number of orbifold points.
\end{dfn}
 
\begin{lemma} 
\label{lemma-elliptic-type}
The possible rational orbifold curves of elliptic type are:
\begin{enumerate}
 \item $(2,3,6),$
 \item $(3,3,3),$
 \item $(2,4,4),$ and
 \item $(2,2,2,2).$
\end{enumerate}
\end{lemma}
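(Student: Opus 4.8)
The plan is to reduce the statement to a purely elementary Diophantine problem and then dispatch it by a short finite case analysis. Since $S_{\bar m}$ is a Riemann sphere, the only data are the multiplicities $m_1,\dots,m_n$ at the $n$ orbifold points, each with $m_i\ge 2$, subject to the single equation $\sum_{i=1}^n \frac{1}{m_i}=n-2$. First I would rewrite this as
$$
\sum_{i=1}^n\Bigl(1-\frac{1}{m_i}\Bigr)=n-\sum_{i=1}^n\frac{1}{m_i}=n-(n-2)=2 .
$$
Because $m_i\ge 2$, each summand satisfies $\tfrac12\le 1-\tfrac1{m_i}<1$, so the number of terms is constrained by $2=\sum(1-\tfrac1{m_i})\ge \tfrac n2$, giving $n\le 4$; and $n\ge 3$ since for $n\le 2$ the left-hand side is $<2$ (indeed $\sum\frac1{m_i}=n-2\le 0$ is impossible with positive terms). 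Thus only $n=3$ and $n=4$ occur.

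For $n=4$ the four terms $1-\tfrac1{m_i}\ge\tfrac12$ add up to exactly $2$, which forces each of them to equal $\tfrac12$, i.e. $m_1=m_2=m_3=m_4=2$; this is case (4), the tuple $(2,2,2,2)$. For $n=3$ the equation becomes $\tfrac1{m_1}+\tfrac1{m_2}+\tfrac1{m_3}=1$ with $2\le m_1\le m_2\le m_3$ (ordering is harmless since the orbifold type is unordered). From $\tfrac3{m_1}\ge 1$ we get $m_1\in\{2,3\}$. If $m_1=3$ then $m_2,m_3\ge 3$ forces $m_2=m_3=3$, giving $(3,3,3)$. If $m_1=2$ then $\tfrac1{m_2}+\tfrac1{m_3}=\tfrac12$ with $3\le m_2\le m_3$ and $\tfrac2{m_2}\ge\tfrac12$, so $m_2\in\{3,4\}$: $m_2=3$ yields $m_3=6$, i.e. $(2,3,6)$, and $m_2=4$ yields $m_3=4$, i.e. $(2,4,4)$. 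Conversely, each of the four tuples clearly satisfies $\sum\tfrac1{m_i}=n-2$, so the list is exactly $(2,3,6)$, $(3,3,3)$, $(2,4,4)$, $(2,2,2,2)$.

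There is no real obstacle here: the argument is entirely elementary, and the only point requiring care is to keep the case split exhaustive — in particular, to verify the bounds $n\ge 3$ and $n\le 4$ before enumerating, and to use the ordering convention so that finitely many sub-cases suffice. I would present the $n=4$ case first (since it is immediate from the uniform lower bound $1-\tfrac1{m_i}\ge\tfrac12$) and then the $n=3$ case, organized by the value of the smallest multiplicity $m_1$.
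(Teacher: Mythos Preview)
Your argument is correct and is the standard elementary classification of solutions to $\sum_{i=1}^n 1/m_i=n-2$ with $m_i\ge 2$; the paper states this lemma without proof, treating it as well known. One cosmetic point: when $m_1=2$ you assert $m_2\ge 3$ without comment, but this is immediate since $m_2=2$ would force $1/m_3=0$.
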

 
\begin{remark}
\label{rem-elliptic}
The reason to call such orbifolds of elliptic type is the following. Consider 
$\hat S_{\bar m}$ the regular covering of order $\ell:=\lcm (\bar m)$ 
(the least common multiple of the orbifold multiplicities) ramified with index
$m_i$ at the $\frac{\ell}{m_i}$ preimages of $P_i$ (the orbifold point of order $m_i$)
of $S_{\bar m}$. In general, if $\sum \frac{1}{m_i}\in \ZZ$, then $\hat S_{\bar m}$ is a
Riemann surface of genus 
$$
1+\frac{\ell}{2}\left(\sum (n-2) - \frac{1}{m_i} \right).
$$
Thus, according to Lemma~\ref{lemma-elliptic-type}, for any rational orbifold curve 
$\hat S_{\bar m}$ of elliptic type the associated covering $\hat S_{\bar m}$ is an 
elliptic curve (that is, a complex compact curve of genus 1).
\end{remark}

The results in this paper require that the least common multiple of the orbifold
indices be $>2$. Therefore type $(4)$ from the list above will be disregarded.

\subsection{Quasi-toric relations}\label{quasitoricsection}

\begin{dfn} 
A \emph{quasi-toric relation of type $(p,q,r)$} is a sextuple
$\cal R^{(p,q,r)}_{{\rm qt}}:=(F_1,F_2,F_3,h_1,h_2,h_3)$ of non-zero homogeneous polynomials in $\CC[x,y,z]$
satisfying the following functional relation
\begin{equation}
\label{eq-qt-rel}
h_1^pF_1+h_2^qF_2+h_3^rF_3=0.
\end{equation}

The \emph{support} of a quasi-toric relation $\cR^{(p,q,r)}_{{\rm qt}}$ as above is the 
zero set $\cC:=\{F_1F_2F_3=0\}$. In this context, we may also refer to $\cC$ as a curve that 
\emph{satisfies (or supports) a quasi-toric relation} of type $(p,q,r)$.

We will say a quasi-toric relation of type $(p,q,r)$ is \emph{of elliptic type} 
if $(p,q,r)$ is of the form $(1)-(3)$ in Lemma~\ref{lemma-elliptic-type}.
\end{dfn}

\begin{remark}
Given a quasi-toric relation $\cR^{(p,q,r)}_{{\rm qt}}=(F_1,F_2,F_3,h_1,h_2,h_3)$ as above, 
note that
\begin{equation}
\label{eq-kappa1}
p \deg h_1 + \deg F_1 = q \deg h_2 + \deg F_2 = r \deg h_3 + \deg F_3 = \kappa
\end{equation}
and hence 
$$
\sum \deg h_i + \left( \frac{\deg F_1}{p} + \frac{\deg F_2}{q} + \frac{\deg F_3}{r}\right) = 
\left( \frac{1}{p} + \frac{1}{q} + \frac{1}{r}\right) \kappa.
$$
Therefore, if $\cR^{(p,q,r)}_{{\rm qt}}$ is of elliptic type, then 
\begin{equation}
\label{eq-kappa2}
\sum \deg h_i + \left( \frac{\deg F_1}{p} + \frac{\deg F_2}{q} + \frac{\deg F_3}{r}\right) = \kappa.
\end{equation}
\end{remark}

\begin{remark} Special classes of curves satisfying quasi-toric relations have already been considered,
namely, the class of \emph{curves of torus type} $(2,3)$, i.e. quasi-toric relations of type 
$(2,3,r)$ of the form $(1,1,F,h_1,h_2,1)$ (cf.~\cite{oka-p-q,pho-torus}) and the class of 
\emph{curves having quasi-toric decompositions} $(p,q,pq)$, i.e. quasi-toric relations of type 
$(p,q,pq)$ of the form $(1,1,F,h_1,h_2,h_3)$ (cf.~\cite{kulikov-albanese}).
Also, functional equations of a similar sort appear in~\cite[\S5.3]{oka-tangential}. 
 
Recall that a \emph{quasi-toric decomposition of $F$} is a collection of homogeneous 
polynomials $f,g,h \in \CC[x,y,z]$ such that the following identity holds: 
\begin{equation} 
\label{eq-qt} 
f^p+g^q+h^{pq}F=0, 
\end{equation}
for two co-prime positive integers $p,q>1$. Analogously, a curve is of torus type $(p,q)$ 
if it admits a quasi-toric decomposition as in~\eqref{eq-qt} where $h=1$.

\end{remark}

To each quasi-toric relation $\cR^{(p,q,r)}_{{\rm qt}}$ of elliptic type
satisfying~\eqref{eq-qt-rel} there corresponds a map from a certain cyclic multiple plane 
branched over the curve supporting $\cR^{(p,q,r)}_{{\rm qt}}$ to the elliptic curve 
$\hat S_{(p,q,r)}$ (see Remark~\ref{rem-elliptic}) as follows. Let $\ell:=\lcm (p,q,r)$  
and $\omega={{\deg F_1} \over p}+{{\deg F_2} \over q}+{{\deg F_3} \over r}$
(note that according to~\eqref{eq-kappa2} one has $\omega=\kappa - \sum \deg h_i \in \ZZ$).
Let $V_\ell$ be the following surface given in the weighted projective space $\PP^3(\omega,1,1,1)$ 
\begin{equation}
\label{eq-qhomo}
V_\ell := \{(u,x,y,z)\in \PP^3(\omega,1,1,1) \mid u^\ell=F_1^{\ell \over p} F_2^{\ell \over q}F_3^{\ell \over r}\}.
\end{equation}
Then let $\hat S_{(p,q,r)}$ be the elliptic curve in $\PP^2$ given by the equation 
$z^\ell=x^{\ell \over p}y^{\ell \over q}(-x-y)^{\ell \over r}$.
To each quasi-toric relation there corresponds the following map
\begin{equation}
\label{eq-map}
\array{rcl}
\PP^3(\omega,1,1,1) \supset V_\ell & \rightarrow & \hat S_{(p,q,r)}\subset \PP^2\\
(u,x,y,z) & \mapsto & (h_1^pF_1,h_2^qF_2,uh_1h_2h_3).
\endarray
\end{equation}
(Note that the map is well defined by~\eqref{eq-kappa1} and~\eqref{eq-kappa2}).

\begin{dfn}
We will say that two quasi-toric relations, $(h_1,h_2,h_3,F_1,F_2,F_3)$ and 
$(\bar h_1,\bar h_2,\bar h_3,\bar F_1,\bar F_2, \bar F_3)$ of the same elliptic type $(e_1,e_2,e_3)$
are \emph{equivalent} iff the corresponding maps~\eqref{eq-map} coincide i.e. there exists
a non-trivial rational function $\lambda \in \CC(x,y,z)^*$ such that 
\begin{equation}
\label{eq-qt-equiv}
\array{l}
\bar h_i^{e_i}\bar F_i = \lambda h_i^{e_i}F_i \ \ \text{ and } \\ \\
\bar h_1\bar h_2\bar h_3 = \lambda h_1h_2h_3.
\endarray
\end{equation}
\end{dfn}

\begin{example}
\label{ex-236}
Note that any quasi-toric relation $(F_1,F_2,F_3,h_1,h_2,h_3)$ of elliptic type 
$(2,3,6)$ is equivalent to one of the form $(1,1,F_3F_2^2F_1^3,\bar h_1,\bar h_2,\bar h_3)$
since $\lambda:=F_1^3F_2^2$, $\bar h_1:=F_1^2F_2h_1$, $\bar h_2:=F_1F_2h_2$, and $\bar h_3:=h_3$ 
satisfy~\eqref{eq-qt-equiv}.

In other words, any $(2,3,6)$ quasi-toric relation is equivalent to one of the form:
$$
\bar h_1^2 + \bar h_2^3 + F_1^3F_2^2F_3 \bar h_3^6 = 0.
$$
This stresses the idea that non-reduced components are indeed required.
\end{example}

\section{Mordell-Weil group of elliptic threefolds  
with fiber having $j=0$}\label{sec-mordellweil}

\subsection{Elliptic pencils, rank of Mordell-Weil group, and 
the degree of Alexander polynomials}
\label{subsec-MW}
In this section, let us fix $F\in \CC[x,y,z]$, an irreducible homogeneous polynomial of 
degree $d=6k$, whose set of zeroes in $\PP^2$ is a curve $\cC$ that has only nodes and cusps 
as singularities (i.e. $\cC$ has either $x^2=y^2$ or $x^2=y^3$ as a local equation around each 
singular point). The homomorphism $\e: \pi_1(\PP^2 \setminus \cC_0 \cup \cC) \to \ZZ$, which is 
given by the total linking number with $\cC$ in the affine plane $\PP^2\setminus \cC_0$, in this
case corresponds to the trivial morphism $\e(\gamma)=1$ for any meridian around $\cC$.

Consider a threefold $W_F$ containing, as a Zariski open subset, 
the affine threefold $W_F^{\circ}$ given in
$\CC^4$ by the following equation:
\begin{equation}
\label{equationWF}
u^2+v^3=F(x,y,1).
\end{equation}
The projection onto the $(x,y)$-plane
exhibits $W_F$ as an elliptic threefold whose fibers
over generic points have $j$-invariant equal to zero 
\footnote{Since we are interested in the Mordell-Weil group, which is a birational invariant of $W_F$, 
the actual choice of $W_F$ is not important. Nevertheless, a particular biregular model 
of $W_F$ will be given (cf.~Lemma \ref{equationforw})}

The main result of this section is a calculation of the Mordell-Weil group of 
the $\CC(x,y)$-points of $W_F$ in terms of the classical Alexander polynomial $\Delta_{\cC}(t)$ 
of $\cC$ (that is, the Alexander polynomial w.r.t. the homomorphism $\e$ described above). 

\begin{theorem}\label{theoremonWF}
The $\ZZ$-rank of the Mordell-Weil group of $W_F$ over $\CC(x,y)$ is equal to the degree 
$\deg (\Delta_{\cC}(t))$ of the Alexander polynomial of $\cC$.
\end{theorem}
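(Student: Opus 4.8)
The plan is to relate the Mordell--Weil group $\MW(W_F/\CC(x,y))$ to the Alexander module $K_\e/K_\e'\otimes\QQ$ of $\cC$ by identifying sections of $W_F$ with appropriate algebraic cycles on a cyclic multiple plane, and then to apply the eigenspace decomposition underlying the Divisibility Theorem of~\cite{Duke}. First I would pass to a smooth projective model $W_F$ of the affine threefold $u^2+v^3=F(x,y,1)$; since the rank of the Mordell--Weil group is a birational invariant, the precise model is irrelevant (as noted in the footnote), though the biregular model of Lemma~\ref{equationforw} will be convenient for bookkeeping of the fibers over the discriminant $\{F=0\}$ and over the line at infinity. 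The generic fiber is the elliptic curve $E_0$ with $j=0$, which has complex multiplication by $\ZZ[\zeta_6]$; this action of $\zeta_6$ on the fibers makes $\MW(W_F/\CC(x,y))\otimes\QQ$ a vector space over $\QQ(\zeta_6)$, and the factor $\varphi_6(t)=t^2-t+1$ appearing in $\Delta_\cC(t)=(t^2-t+1)^q$ is exactly the minimal polynomial of $\zeta_6$. Thus the clean target statement is that the $\QQ(\zeta_6)$-dimension of $\MW\otimes\QQ$ equals $q$, i.e. its $\ZZ$-rank equals $2q=\deg\Delta_\cC$.

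The core of the argument is a chain of isomorphisms. The threefold $W_F$ is birational to the quotient of the sixfold cyclic cover — or better, to a fiber product construction — linking it to the cyclic multiple plane $V_6\colon z^6=F(x,y,1)$: indeed $u^2+v^3=F$ together with the substitution $u=z^3,\ v=z^2$ realizes a rational map from $V_6$ to $W_F$, and conversely a $\CC(x,y)$-section of $W_F$ is a pair of rational functions $(u,v)\in\CC(x,y)^2$ with $u^2+v^3=F$; the quasi-toric reformulation $(f/h^3)^2+(g/h^2)^3=F$, i.e. $f^2+g^3+h^6F=0$, is the bridge to the material of Section~\ref{sec-qt-rels}. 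I would then use a Hodge-theoretic computation: the relevant piece of $H^*$ of $V_6$ (equivalently, of $H^1$ of the infinite cyclic cover of $\PP^2\setminus\cC_0\cup\cC$, the Alexander module) that carries the $\zeta_6$-eigenvalue has dimension governed by the superabundance $s$ of curves of degree $d-3-d/6$ through the cusps, and $\deg\Delta_\cC=2s$. The link between sections and this cohomology is made via the Shioda--Tate/Shioda--Lefschetz type formula for elliptic threefolds, or via the fact that the Albanese of $V_6$ splits into a product of copies of $E_0$ (by Roan's theorem, or directly by the Hodge refinement of the argument in~\cite{Duke}, as indicated in the Introduction), so that $\MW(W_F)\otimes\QQ\cong\Hom_{E_0}(\Alb(V_6),E_0)\otimes\QQ$, whose rank over $\ZZ$ is $2q$.

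Concretely, the key steps in order are: (i) fix the biregular model and describe the reducible/multiple fibers of $W_F\to\PP^2$ over $\cC$ (a cusp of $\cC$ gives a more degenerate fiber than a node, contributing to the trivial part of $\MW$ that one must quotient out); (ii) establish the correspondence between $\CC(x,y)$-sections of $W_F$ and $\zeta_6$-equivariant maps $V_6\to E_0$, hence between $\MW(W_F)$ and $\Hom(\Alb(V_6),E_0)$ compatibly with the $\ZZ[\zeta_6]$-actions; (iii) compute the isotypical component of $\Alb(V_6)$, equivalently of the Alexander module, identifying its dimension with $\deg\Delta_\cC=2q$ via the position-of-singularities/superabundance description recalled in the Introduction; (iv) account for the contributions at infinity — since $\cC_0$ is transversal to $\cC$ and $\e(\gamma)=1$, the link at infinity contributes only a power of $(t-1)$, which does not meet the $\zeta_6$-eigenvalue, so it does not affect the count — and conclude.

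The main obstacle I expect is step (ii)/(iii): making the identification of $\MW(W_F/\CC(x,y))$ with the $\zeta_6$-eigenpiece of the Alexander module precise and, in particular, showing that \emph{every} class in that eigenpiece is realized by an honest rational section (surjectivity), not merely that sections inject into cohomology. This requires controlling the Néron--Severi versus transcendental parts of the relevant cohomology of a smooth model — effectively a Shioda--Tate argument for an elliptic threefold, where one must verify that all the "extra" Hodge classes coming from the $\zeta_6$-eigenspace are algebraic and come from horizontal divisors (sections), using that $E_0$ has CM so that the relevant Hodge structures are of the expected type. The Hodge-theoretic refinement of the Divisibility Theorem argument from~\cite{Duke} is exactly what is needed to pin this down, and integrating it with the Shioda--Tate formula (as the Introduction promises, via~\cite{shioda}) is where the real work lies; the remaining steps are essentially bookkeeping about fibers and about the behavior at infinity.
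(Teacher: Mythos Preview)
Your outline follows essentially the same architecture as the paper: pass to the sixfold cyclic cover $V_6$, use that $\Alb(V_6)\cong E_0^q$ (via Roan's theorem plus the Hodge refinement of the Divisibility argument showing the deck transformation has a single primitive sixth-root eigenvalue on $H^0(\Omega^1)$), and identify $\MW(W_F)$ with $\mu_6$-equivariant maps $V_6\to E_0$, hence with $\Hom(E_0^q,E_0)=\ZZ[\omega_6]^q$.

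Where you diverge is in the anticipated ``main obstacle.'' You expect the hard step to be a Shioda--Tate/Lefschetz-type argument showing that every $\zeta_6$-eigenclass in cohomology comes from an honest section. The paper never passes through cohomology or N\'eron--Severi for this theorem at all. The identification is purely group-theoretic: $W_F$ is birationally the quotient $(V_6\times E_0)/\mu_6$ for the diagonal action (Lemma~\ref{equationforw}), so $\MW(W_F)=\MW(V_6\times E_0/\CC(V_6))^{\mu_6}$ by Galois descent (Theorem~\ref{irregularityMW}); the split Mordell--Weil group is $\Mor(V_6,E_0)$ modulo constants, which by universality of the Albanese map equals $\Hom(\Alb(V_6),E_0)$; and the $\mu_6$-invariance is \emph{automatic} because every element of $\End(E_0)=\ZZ[\omega_6]$ commutes with the complex multiplication. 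No surjectivity question arises, and Shioda--Tate plays no role here---it enters only later, in Theorem~\ref{boundcusps}, to bound the rank, not to compute it. Your step~(i) on degenerate fibers is likewise unnecessary for the equality.
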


Let $V_6(F)$ (or simply $V_6$)
denote a smooth model the 6-fold cyclic cover of $\PP^2$ branched along $F=0$
corresponding to the surjection $\pi_1(\PP^2 \setminus \cC_0 \cup \cC)
\rightarrow \ZZ_6$ which is defined using the composition of the 
homomorphism as above and the reduction modulo $6$.
$V_6(F)$ contains, as an open subset, the affine hypersurface in $\CC^3$ 
given by the equation 
\begin{equation}\label{multipleplane}
z^6=F(x,y,1). 
\end{equation}
We shall assume that $V_6(F)$ supports a 
holomorphic action of the group $\mu_6$ of roots of unity of degree 6, 
extending the action of this group 
affine surface given by $(x,y,z) \rightarrow (x,y,\omega_6z)$ 
where $\omega_6=exp({{2 \pi \sqrt{-1}} \over 6})={{1+\sqrt{-3}}\over 2}$
is selected 
primitive root of unity of degree 6. Such smooth model $V_6$ can be obtained 
for example using an equivariant resolution of singularities of projective 
closure of (\ref{multipleplane}). 
 
Recall that the degree of the Alexander polynomial $\Delta_{\cC}(t)$ of $\cC$ 
is equal to $2q$, where $q$ is the irregularity 
$q:=\dim H^0(V_6,\Omega^1_{V_6})$ of $V_6$ (cf.~\cite{Duke}). 
Let $E_0$ denote the elliptic curve with $j$-invariant equal to zero.
As its biregular model one can take the projective closure of
affine curve $u^2+v^3=1$. 
 
We shall start the proof of Theorem~\ref{theoremonWF} 
by describing the Albanese varieties of six-fold cyclic multiple planes $V_6$. 
Such a description will be based on the following Decomposition Theorem due to S.~Roan 
(see~\cite[Theorem 3.2]{Lange-Roan}).
 
\begin{theorem}\label{roanth}
Let $X$ be an abelian variety of dimension $g$ and $\alpha$ an automorphism of $X$ of order $d\geq 3$. 
Let $\Phi_{\alpha}$ be the collection of the eigenvalues of the automorphism $\rho_{\alpha}$ induced by 
$\alpha$ on the universal cover of $X$ and let $X_{\langle \alpha \rangle}$ 
be the union of fixed points of powers $\alpha^i$ for $1 \le i <d$. 
Assume that $X_{\langle \alpha \rangle}$ is finite and 
$\# \Phi_\alpha=\frac{\varphi(d)}{2}$, where $\varphi$ is the Euler function. Then $\varphi(d)\mid 2g$
and $(\QQ(\xi_d),\Phi_\alpha)$ is a CM-field
(cf.~\cite[Section~3]{Lange-Roan}).
Moreover, there are $k=\frac{2g}{\varphi(d)}$ abelian varieties
$X_1, ... , X_k$ of CM-type $(\QQ(\xi_d),\Phi_\alpha)$ s.t. $X$ factors as
$$
X_1\times ... \times X_k
$$
and $\alpha$ decomposes into a product of automorphisms of $X_i$ induced by 
primitive $d$-th roots of unity.
\end{theorem}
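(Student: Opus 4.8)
The plan is to recast everything in terms of the action of $\alpha$ on the integral lattice $\Lambda:=H_1(X,\ZZ)$ and on the universal cover $\CC^g\cong T_0X$, and then invoke the structure theory of modules over the Dedekind ring $\ZZ[\xi_d]$ together with the CM-type construction of abelian varieties. First I would pin down the eigenvalues. Because $X_{\langle\alpha\rangle}$ is finite, no power $\alpha^i$ with $1\le i<d$ fixes a positive-dimensional abelian subvariety, so $1$ is not an eigenvalue of $\alpha^i$ on $H_1(X,\QQ)$ for those $i$; equivalently every eigenvalue of $\alpha$ on $H_1(X,\CC)$ is a \emph{primitive} $d$-th root of unity. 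Since $\alpha^d=1$ and $t^d-1$ is separable, the minimal polynomial of $\alpha$ on $H_1(X,\QQ)$ is squarefree, hence equal to the $d$-th cyclotomic polynomial $\Phi_d(t)$; as $\deg\Phi_d=\varphi(d)$ this forces $\varphi(d)\mid 2g$ and exhibits $H_1(X,\QQ)$ as a $\QQ(\xi_d)=\QQ[t]/(\Phi_d)$-vector space of dimension $k:=2g/\varphi(d)$. Correspondingly $\Lambda$ is a finitely generated torsion-free module of rank $k$ over the ring of integers $\ZZ[\xi_d]$, with $\alpha$ acting as multiplication by $\xi_d$.

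Next I would read off the CM-type from the hypothesis $\#\Phi_\alpha=\varphi(d)/2$. Identifying eigenvalues of $\rho_\alpha$ on $T_0X$ with embeddings $\QQ(\xi_d)\hookrightarrow\CC$, complex conjugation on $H_1(X,\CC)=T_0X\oplus\overline{T_0X}$ shows that for each conjugate pair $\{\zeta,\bar\zeta\}$ of primitive $d$-th roots the multiplicities on $T_0X$ add up to $k$; since there are $\varphi(d)/2$ such pairs and each contributes at least one element to $\Phi_\alpha$, the equality $\#\Phi_\alpha=\varphi(d)/2$ forces exactly one member of each pair to occur, each with multiplicity exactly $k$. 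Hence $\Phi_\alpha\sqcup\overline{\Phi_\alpha}=\Hom(\QQ(\xi_d),\CC)$, and as $\QQ(\xi_d)$ is a totally imaginary quadratic extension of the totally real field $\QQ(\xi_d+\xi_d^{-1})$ for $d\ge 3$, the pair $(\QQ(\xi_d),\Phi_\alpha)$ is a CM-field with CM-type. Moreover, because multiplication by $\xi_d$ is $\CC$-linear, the complex structure of $X$ on $\Lambda\otimes\RR\cong(\QQ(\xi_d)\otimes_{\QQ}\RR)^{\oplus k}\cong(\CC^{\Phi_\alpha})^{\oplus k}$ is precisely the one attached to the CM-type $\Phi_\alpha$, acting identically on each of the $k$ copies.

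To finish I would split the lattice. As $\ZZ[\xi_d]$ is a Dedekind domain, Steinitz' theorem gives $\Lambda\cong\mathfrak a_1\oplus\dots\oplus\mathfrak a_k$ with the $\mathfrak a_i$ fractional ideals, and each $\mathfrak a_i$ is stable under multiplication by the unit $\xi_d$, so $\alpha$ preserves this splitting. By the previous paragraph each $\mathfrak a_i\otimes\RR\cong\CC^{\Phi_\alpha}$ is stable under the complex structure of $X$, so $X_i:=(\mathfrak a_i\otimes\RR)/\mathfrak a_i$ is a complex subtorus of $X$, of CM-type $(\QQ(\xi_d),\Phi_\alpha)$ by construction, and $X\cong X_1\times\dots\times X_k$ as complex tori. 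Since $X$ is projective, Poincaré's complete reducibility theorem makes each $X_i$ an abelian subvariety, so the factorization holds in the category of abelian varieties; and by construction $\alpha$ restricts on $X_i$ to multiplication by the primitive $d$-th root of unity $\xi_d$, which is the asserted decomposition of $\alpha$.

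I expect the main obstacle to be the compatibility used in the last two paragraphs: that the purely arithmetic decomposition of $\Lambda$ over $\ZZ[\xi_d]$ respects the Hodge-theoretic complex structure of $X$, so that one obtains an honest product of abelian varieties rather than merely an isogeny, and that each factor is genuinely algebraic (dealt with here via $X_i\subset X$ and Poincaré reducibility, though one could instead produce a Riemann form directly from the CM data). By contrast, the eigenvalue bookkeeping in the first two paragraphs is routine.
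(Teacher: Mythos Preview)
The paper does not prove this theorem: it is quoted verbatim from \cite[Theorem~3.2]{Lange-Roan} and used as a black box, so there is no proof in the paper to compare against. That said, your argument is essentially the standard CM-theoretic proof and is correct in outline. The decisive point, which you identify, is that the hypothesis $\#\Phi_\alpha=\varphi(d)/2$ forces $J$ to act as the scalar $\pm i$ on each $\zeta$-eigenspace of $\alpha$ in $\Lambda\otimes\CC$; hence $J$ is completely determined by the $\ZZ[\xi_d]$-module structure, and any $\ZZ[\xi_d]$-submodule of $\Lambda$ is automatically $J$-stable after tensoring with $\RR$. This is exactly what makes the Steinitz splitting compatible with the Hodge structure and yields an honest product rather than merely an isogeny.

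Two small remarks. First, your appeal to Poincar\'e complete reducibility is misplaced: what you need is the more elementary fact that a complex subtorus of an abelian variety is a closed connected complex Lie subgroup, hence algebraic (Chow/GAGA), hence an abelian subvariety; Poincar\'e reducibility is about finding complements, which you do not need since your lattice already splits. Second, to justify that $\Lambda$ is a $\ZZ[\xi_d]$-module (not just $\Lambda\otimes\QQ$), you should note that $\Phi_d(\alpha)$ vanishes on $\Lambda$ because it vanishes after tensoring with $\QQ$ and $\Lambda$ is torsion-free; then $\ZZ[\xi_d]$ being the full ring of integers of $\QQ(\xi_d)$ (a nontrivial classical fact) is what makes it Dedekind and lets Steinitz apply. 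With these clarifications the proof is complete.
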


Recall (cf.~\cite{serre}) that the Albanese variety of a smooth projective variety $X$ is defined as 
\begin{equation}\label{defalb}
\Alb(X)=H^0(X,\Omega^1_X)^*/H_1(X,\ZZ)
\end{equation}
and the Albanese map 
$alb: X \rightarrow \Alb(X)$ is given by
\begin{equation}\label{albanesemap}
P \rightarrow \int_{P_0}^P \omega,
\end{equation} 
where $P_0$ is a base point and the integral in~(\ref{albanesemap}) is viewed as a linear 
function on $H^0(X,\Omega^1_X)$, well defined up to the periods (i.e. the 
integrals over loops representing classes in $H_1(X,\ZZ)$). The map $alb$ is well defined 
up to translation (i.e. a choice of $P_0$). A choice of a positive line bundle on $X$ 
yields a polarization of $\Alb(X)$ making it into an abelian variety. 
In the case of 
the cyclic multiple plane $V_6$ we select a point $P_0$ in the 
locus with maximal ramification index 
of its projection onto $\PP^2$. The map $alb$ is universal in the sense that for any map 
$X \rightarrow A$ into an abelian variety $A$, there exists a factorization:
\begin{equation}
X \rightarrow \Alb(X) \rightarrow A.
\end{equation}
It follows from (\ref{defalb}) that when $X$ carries a biholomorphic action of a group 
$G$ fixing the base point $P_0$, the action of $G$ on $H^0(X,\Omega^1_X)$,
given by $g(\omega)=({g^{-1}})^*\omega$, induces the action 
on $\Alb(X)$. For this action the map $alb$ 
is equivariant:
\begin{equation}
alb(g\cdot P)=g_*^{-1}(alb(P)).
\end{equation}
For example, for $g \in G$ one has $g_*^{-1}alb(P)=
\int_{P_0}^P(g^{-1})^*\omega=\int_{gP_0}^{gP}\omega=alb(g\cdot P)$.

We will need also a local version of the above construction corresponding 
to the mixed Hodge structure associated with 
a germ of plane curve singularity (with assumptions stated below): 
\begin{equation}\label{planesing}
f(x,y)=0. 
\end{equation}
While there are  several constructions 
of a mixed Hodge structure 
associated with a germ of singularity 
(\ref{planesing}) (cf.~\cite{Steen}), we shall 
consider only the case
when the monodromy action on the cohomology of Milnor fiber is 
 semi-simple (e.g. the ordinary cusps, nodes and 
more generally the singularities appearing in tables~\ref{tab:d-ess},
\ref{tab:d-4-ess} and~\ref{tab:d-3-ess}
 in section~\ref{sec-delta-curves}).
In this case one can identify the (co)homology of the Milnor fiber with the
(co)homology of the link of the surface singularity 
\begin{equation}\label{localcover}
z^N=f(x,y), 
\end{equation}
where $N$ is the order of the monodromy of the cohomology of Milnor fiber
(cf. for example~\cite{alexhodge} for a similar discussion).
More precisely we have the following:

\begin{lemma}\label{comparemhs} Let (\ref{planesing}) be a germ 
of a plane curve
(possibly reducible and non-reduced) 
with semi-simple monodromy of order $N$ and the Milnor fiber $F_f$. Let $L_{f,N}$ be link 
of the corresponding surface singularity (\ref{localcover}).
Then there is the isomorphism of the mixed Hodge structures:
\begin{equation}\label{lemmaisomorphism}
H^2(L_{f,N})(1)=H^1(F_f)
\end{equation}
where the mixed Hodge structure on the left is the Tate twist of 
the mixed Hodge structure constructed in 
\cite{durfee1} or  \cite{durfee} 
 and the one on the right is the mixed Hodge structure 
on vanishing cohomology constructed in \cite{van}
\end{lemma}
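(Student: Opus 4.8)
The plan is to exhibit both sides as the cohomology of spaces that are canonically homotopy equivalent (so that the underlying isomorphism of abelian groups is clear) and then to check that the mixed Hodge structures match after the Tate twist. First I would recall the two players. On the right, $H^1(F_f)$ is the first cohomology of the Milnor fiber $\{f(x,y)=\e\}\cap B_\delta$, carrying Steenbrink's limit mixed Hodge structure (the one in~\cite{van}), whose monodromy $T$ is, by hypothesis, semisimple of order $N$. On the left, $L_{f,N}$ is the link of the isolated surface singularity $z^N=f(x,y)$, i.e. the boundary of a Milnor ball, a compact oriented $3$-manifold, with the natural mixed Hodge structure on $H^2(L_{f,N})$ constructed by Durfee--Hain~\cite{durfee1,durfee}.

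The key geometric input is the standard fact that for the cyclic cover $z^N=f(x,y)$, with $N$ equal to the order of the monodromy, the Milnor fiber $\Phi$ of $z^N-f$ is, up to homotopy, the $N$-fold cyclic cover of the $f$-Milnor fiber $F_f$ branched appropriately; more precisely $\Phi$ fibers over the punctured disk with fiber $F_f$ and monodromy $T$, and since $T^N=\mathrm{id}$ one gets a free $\ZZ/N$-action on $\Phi$ with quotient homotopy equivalent to $F_f$. Because $T$ is semisimple, the Wang sequence of the Milnor fibration of $z^N-f$ splits the cohomology of $\Phi$ into $T$-eigenspaces, and the invariant part recovers $H^*(F_f)$ while the relation between $\Phi$ and the link of the \emph{surface} singularity is the usual one: for an isolated surface singularity the Milnor fiber is simply connected and its boundary is the link, so $H^2$ of the link sits in the long exact sequence of the pair $(\Phi,L_{f,N})$, and one reads off that $H^2(L_{f,N})$ is isomorphic (after accounting for the degree shift, i.e. a Tate twist by $1$) to the relevant piece of $H^1(F_f)$. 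Concretely, I would run the long exact sequence of $(\Phi,\partial\Phi=L_{f,N})$ together with Lefschetz duality $H^k(\Phi,\partial\Phi)\cong H_{4-k}(\Phi)$ on the $4$-real-dimensional Milnor fiber, extract the arithmetic genus / middle-dimensional part, and identify the outcome with $H^1(F_f)$; the factor $(1)$ in~\eqref{lemmaisomorphism} is exactly the weight/Hodge-filtration shift produced by this duality in real dimension $4$ versus the $2$-dimensional $F_f$.

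Having the abstract isomorphism of groups, the remaining — and this is the heart of the matter — is the compatibility of mixed Hodge structures. Here I would argue that both MHS are functorial and are computed from the same resolution: take an embedded resolution of $\{f=0\}$ in a neighborhood of the singular point, pull back the $N$-fold cover, and resolve; the exceptional configuration and its combinatorics simultaneously compute Steenbrink's limit MHS on $H^1(F_f)$ (via the nearby-cycle complex and the weight spectral sequence in terms of the dual graph of the special fiber) and the Durfee--Hain MHS on $H^2$ of the link (which by construction is read from the same dual graph / resolution data, being the canonical MHS on the cohomology of a tubular neighborhood boundary). Matching the two weight spectral sequences term by term, and checking the Hodge filtrations agree after the Tate twist, gives the isomorphism of MHS. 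I expect the main obstacle to be precisely this last bookkeeping: making the identification of the two weight filtrations canonical (not merely abstractly isomorphic) and tracking the Tate twist consistently through Lefschetz duality on the $4$-manifold $\Phi$; the semisimplicity hypothesis on the monodromy is what keeps this manageable, since it forces the weight filtration on $H^1(F_f)$ to be the "expected" one (pure of weights $0$ and $2$ with no weight-$1$ part surviving in the relevant eigenspaces), which is what the link side produces. For the cases actually used in the paper (nodes, ordinary cusps, and the singularities in Tables~\ref{tab:d-ess}, \ref{tab:d-4-ess}, \ref{tab:d-3-ess}) one can alternatively verify~\eqref{lemmaisomorphism} by a direct computation, since the Milnor fibers and links are all explicit, and I would include such a check as a sanity verification.
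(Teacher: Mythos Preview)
Your overall strategy---relate $H^2$ of the link to the Milnor fiber cohomology via some intermediate space, then argue MHS compatibility---is reasonable, but the specific geometric setup you propose is confused, and the route is considerably more circuitous than the paper's.

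First, the description of $\Phi$ (the Milnor fiber of $z^N-f$ in $\CC^3$) is garbled. $\Phi$ is a $4$-manifold; it is not a cyclic cover \emph{of} the curve $F_f$ (that would be a Riemann surface). What is true is that $\Phi$ is the $N$-fold cyclic cover of the $4$-ball \emph{branched over} $F_f$. Your subsequent claim that ``$\Phi$ fibers over the punctured disk with fiber $F_f$ and monodromy $T$'' is also off: projecting $\Phi$ to the $z$-coordinate gives Milnor fibers of $f$ over all points except the $N$ points where $z^N=\epsilon$, so the base is a disk with $N$ punctures, not one. Consequently the ``free $\ZZ/N$-action with quotient $F_f$'' does not arise the way you describe. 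Even after fixing this, the long exact sequence of $(\Phi,L_{f,N})$ with Lefschetz duality gives $H^2(L_{f,N})$ as the cokernel of the intersection form on $H^2(\Phi)$; by Thom--Sebastiani $H^2(\Phi)\cong H^1(F_f)^{\oplus(N-1)}$, and extracting exactly one copy of $H^1(F_f)$ from this cokernel, while matching MHS, is a genuine additional computation you have not addressed.

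The paper bypasses all of this by working not with the Milnor fiber of $z^N-f$ on $\CC^3$, but with the function $z$ restricted to the \emph{surface} $V_{N,f}=\{z^N=f\}$. The fiber $z=t$ is literally $\{f=t^N\}$, i.e.\ the Milnor fiber $F_f$ itself, and the monodromy of this fibration is $T^N=I$. The Wang exact sequence for this fibration is a sequence of mixed Hodge structures (by Dimca--Saito), and since $T^N-I=0$ it collapses to give $H^1(F_f)(-1)\cong H^2(V_{N,f}\setminus Z_0)\cong H^2(L_{f,N}^\circ)$ directly, with the Tate twist appearing for free. One then checks $H^2(L_{f,N})\cong H^2(L_{f,N}^\circ)$ from the exact sequence of the pair. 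This avoids Lefschetz duality, Thom--Sebastiani, and any term-by-term comparison of weight spectral sequences: the MHS compatibility is built into the Wang sequence itself.
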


Recall that the construction of the mixed Hodge structure on the 
cohomology of a link of an isolated singularity is based on 
the identification of the latter with the cohomology 
of the punctured regular neighborhood of the exceptional set in a 
resolution for~(\ref{localcover}). 
Alternatively,  
one can use the mixed Hodge structure on the local 
cohomology of~\eqref{localcover} 
supported at the singularity of~\eqref{localcover}.
Dualizing one obtains the mixed Hodge structure on the homology as well.

\begin{proof} 
Consider $z$ as a holomorphic 
function on the germ $V_{N,f}$ of the surface singularity 
$z^N=f(x,y)$ (cf. (\ref{localcover})) and its Milnor fiber $F_{N,f}$
i.e. the subset of $V_{N,f}$
given by the equation $z=t$ for fixed $t$. It has canonical identification 
with the Milnor fiber of $f(x,y)$ over $t^N$. Monodromy of the Milnor 
fiber of $z$ coincides with $T^N$ where $T$ is the monodromy of $f=t$.
Denote by $Z_0$ the subset of $V_{N,f}$ given by $z=0$ and 
consider the Wang exact sequence of the mixed Hodge structures 
(cf. for example 
\cite{dimcasaito} section 1.7)):
\begin{equation}
  H^1(V_{N,f}-Z_0) 
\rightarrow  H^1(F_{N,f}) \buildrel {T^N-I} \over 
\longrightarrow H^1(F_{N,f})(-1) \rightarrow 
H^2(V_{N,f}-Z_0) \rightarrow 0
\end{equation} 
The last term of this sequence is zero since the Milnor fiber has 
the homotopy type of a 1-complex. Since $T^N=I$ and since 
$L_{N,f}^{\circ}=L_{N,f}-Z_0$ has the mixed Hodge structure 
on its cohomology is 
constructed via the identification with the mixed Hodge structure 
on $V_{N,z}-Z_0$ we obtain isomorphism:
\begin{equation}\label{partialisomorphism}
   H^1(F_{N,f})(-1)=H^2(L_{N,f}^{\circ})
\end{equation} 

It provides an identification of the cohomology of punctured neighborhood
$L_{f,N}^{\circ}$ with the cohomology of Milnor fiber
i.e. $$H^2(L_{f,N}^{\circ})=H^1(F)(-1)$$
Moreover,  the exact sequence of pair:
\begin{equation}
H^2(L_{N,f},L_{N,f}^{\circ}) \rightarrow 
H^2(L_{f,N}) \rightarrow H^2(L_{f,N}^{\circ}) \rightarrow H^3
(L_{N,f},L_{N,f}^{\circ})
\end{equation} 
yields the isomorphism 
\begin{equation}
 H^2(L_{f,N})=H^2(L_{f,N}^{\circ})
\end{equation}
This together with (\ref{partialisomorphism}) yields the claim of the lemma. 
\end{proof}

Construction of the mixed Hodge structure on either side of 
(\ref{lemmaisomorphism}) yields  
that the type of this mixed Hodge structure on corresponding {\it homology} 
is 
\begin{equation}
(-1,-1),(-1,0),(0,-1),(0,0)
\end{equation}
In fact with assumption of 
semi-simplicity made in lemma \ref{comparemhs} the mixed Hodge structure 
on the homology of Milnor fiber is pure of type $(-1,0),(0,-1)$.

The equivalence of categories of such mixed Hodge structures without torsion 
and the categories of 1-motifs constructed in~\cite[10.1.3]{deligneIII} allows to 
extract the abelian variety $A$ which is the part of structure of 1-motif.
We shall refer to this abelian variety as the local 
Albanese variety of the singularity~\eqref{planesing}. 
The polarization of the Hodge structure on $\Gr_{-1}^W$,
required for such equivalence is the standard polarization of such graded component 
associated with the Milnor fiber. 

In the case of the cusp $x^2=y^3$
the assumptions of the lemma \ref{comparemhs} are fulfilled.
One can also describe the construction of corresponding local 
Albanese variety as taking the quotient of
$(\Gr^0_F\Gr^W_1)^*$ by the homology lattice of the closed
Riemann surface which is the compactified Milnor
fiber. In particular  
the Hodge structure on the cohomology of its Milnor fiber $x^2=y^3+t$
is the Hodge structure of the elliptic curve $E_0$ with
$j=0$. 

The above discussion yields: 

\begin{cor}
The eigenvalues of the generator 
$z \rightarrow z\cdot \exp({{2 \pi \sqrt{-1}} \over N})$ 
of the group of covering transformations (\ref{localcover})
acting on the cohomology of the singularity link~\eqref{localcover}
coincide with the eigenvalues of the 
action of the monodromy on the cohomology of the Milnor
fiber~\eqref{planesing} (cf. \cite{Duke} or \cite[section 1.3.1]{charvar}).
The eigenvalues of the above generator 
of the group of deck transformations of the germ \ref{localcover})
acting on $\Gr^0_F\Gr^W_1(H^1(L_{N,f}))$ 
have the form $e^{2 \pi \sqrt{-1} \alpha}$ where $\alpha$ runs through 
the elements  the spectrum of the singularity~\eqref{planesing}
which belongs to the interval $(0,1)$ (cf.~\cite{loeser}).
\end{cor}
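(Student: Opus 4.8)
The plan is to read both assertions off Lemma~\ref{comparemhs} once it is supplemented by the classical dictionary between the action of the covering group of a cyclic multiple cover and the monodromy on the Milnor fiber. Throughout, let $\sigma\colon(x,y,z)\mapsto(x,y,\omega_N z)$, $\omega_N=e^{2\pi\sqrt{-1}/N}$, be the chosen generator of the group $\mu_N$ of deck transformations of (\ref{localcover}); it acts on the link $L_{N,f}$ and hence on $H^{*}(L_{N,f})$, preserving the mixed Hodge structure used in Lemma~\ref{comparemhs}.

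For the first assertion I would argue as follows. The map of germs $z\colon V_{N,f}\setminus Z_0\to\CC^{*}$ followed by the $N$-th power map $\CC^{*}\to\CC^{*}$ is the Milnor fibration of $f$, so its fiber $\{z=t\}$ is canonically the Milnor fiber $F_f$ of $f$; the $N$-th power map is Galois with group $\mu_N$, and $\sigma$ covers a generator of this group, i.e. one full loop around $0$ in the base $\CC^{*}$ of the Milnor fibration. Hence the induced automorphism of $H^1(F_f)$ is (the inverse of) the monodromy $T$; this is precisely the classical correspondence between eigenspaces of a cyclic cover and monodromy eigenspaces (cf.~\cite{Duke}, \cite[section 1.3.1]{charvar}), and the identifications used in the proof of Lemma~\ref{comparemhs} make the isomorphism (\ref{lemmaisomorphism}) intertwine $\sigma$ on $H^2(L_{N,f})$ with $T^{\mp1}$ on $H^1(F_f)$. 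Since $T$ has order $N$ and is defined over $\QQ$, its set of eigenvalues on $H^1(F_f)$ is a conjugation-stable set of $N$-th roots of unity, so it agrees with that of $T^{-1}$; transporting it through (\ref{lemmaisomorphism}) and using Poincar\'e duality on the closed $3$-manifold $L_{N,f}$ for the remaining degree, the eigenvalues of $\sigma$ on $H^{*}(L_{N,f})$ coincide with those of $T$ on $H^1(F_f)$.

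For the second assertion I would use that (\ref{lemmaisomorphism}) is an isomorphism of mixed Hodge structures, so it carries the Hodge and weight filtrations of $H^2(L_{N,f})(1)$ onto those of $H^1(F_f)$, and dually for $H^1(L_{N,f})$. Under the running semisimplicity hypothesis, the summand of $H^1(F_f)$ on which $T$ has no eigenvalue $1$ is pure of weight $1$, with Hodge types $(1,0)$ and $(0,1)$ (equivalently $(-1,0),(0,-1)$ on homology, as recalled after Lemma~\ref{comparemhs}); via (\ref{lemmaisomorphism}) this identifies $\Gr^W_1(H^1(L_{N,f}))$ with that summand (up to the Tate twist), and $\Gr^0_F$ of it with $\Gr^0_F$ of that summand. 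By the definition of the spectrum of the singularity (\ref{planesing}) through the Hodge filtration on vanishing cohomology (cf.~\cite{loeser}), $\Gr^0_F$ of the $\lambda$-eigenspace of $T$ on $H^1(F_f)$ is non-zero exactly when $\lambda=e^{-2\pi\sqrt{-1}\alpha}$ for a spectral number $\alpha\in(0,1)$, its dimension being the multiplicity of $\alpha$. Combining this with the identification of $\sigma$ with $T^{-1}$ from the first step, the eigenvalues of $\sigma$ on $\Gr^0_F\Gr^W_1(H^1(L_{N,f}))$ are exactly the $e^{2\pi\sqrt{-1}\alpha}$ with $\alpha$ running over the spectral numbers of (\ref{planesing}) lying in $(0,1)$.

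I expect the delicate point to be the equivariance itself: $\sigma$ does not preserve an individual fiber $\{z=t\}$, so to make the Wang-sequence argument of Lemma~\ref{comparemhs} genuinely $\mu_N$-equivariant one has to reinterpret it as the Wang sequence of the $\mu_N$-equivariant local system $R^1z_{*}\QQ$ on a punctured neighborhood of $\{z=0\}$, on whose fiber $H^1(F_f)$ the generator of $\mu_N$ acts through $T^{-1}$; only then does one cleanly conclude that (\ref{lemmaisomorphism}) intertwines $\sigma$ with the monodromy. The remaining effort is bookkeeping: fixing the orientation convention so that $\sigma$ corresponds to $T^{-1}$ rather than $T$, and tracking the Tate twist in (\ref{lemmaisomorphism}) so that the spectral numbers produced by, say, the ordinary cusp $x^{2}=y^{3}$ come out as $e^{2\pi\sqrt{-1}\alpha}$ with $\alpha\in(0,1)$ and the relevant graded piece is $\Gr^0_F$ rather than $\Gr^1_F$. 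The semisimplicity hypothesis is used precisely to kill any unipotent contribution, so that the link cohomology is, $\mu_N$-equivariantly and Hodge-theoretically, the (Tate-twisted) cohomology of the Milnor fiber.
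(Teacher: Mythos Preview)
Your proposal is correct and follows the same route the paper intends: the corollary is stated in the paper without a separate proof, preceded only by ``The above discussion yields'', so it is meant to be read off directly from Lemma~\ref{comparemhs} together with the classical identification of the deck transformation with the monodromy --- exactly what you do. Your treatment is in fact more careful than the paper's, in that you spell out the equivariance of the Wang-sequence isomorphism and the bookkeeping with $T$ versus $T^{-1}$ and the Tate twist, points the paper passes over in silence.
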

 
For example, 
in the case of the cusp $x^2=y^3$ the monodromy, corresponding to the 
path $e^{2 \pi \sqrt{-1} s}$ ($0 \le s \le 1$) in  
the positive direction given by the complex structure
yields the monodromy of the Milnor fiber $F_{N,f}$ in lemma
\ref{comparemhs} given by 
\begin{equation}
(x,y,z) \rightarrow (xe^{{2 \pi \sqrt{-1} s}\over 2},
ye^{{2 \pi \sqrt{-1} s}\over 3}
ze^{{2 \pi \sqrt{-1} s}\over 6})
\end{equation}

In order to apply Theorem~\ref{roanth} to $\Alb(V_6)$, 
we shall need the following:

\begin{lemma}
\label{albaneseeigenvalues}
Let, as above, $V_6$ be a smooth $\ZZ_6$-equivariant 
model of a cyclic 6-fold covering 
space of $\PP^2$ branched over a curve with only nodes and cusps as singularities.
Let $T$ be a generator of covering group of $V_6$. The automorphism of $\Alb(V_6)$ 
induced by $T$ has only one eigenvalue (which is a primitive root of unity of degree six).
\end{lemma}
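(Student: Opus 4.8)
The plan is to determine the eigenvalues of $T_*$ on $H^1(V_6,\CC)$ (equivalently on $H^{1,0}\oplus H^{0,1}$, which determines the action on $\Alb(V_6)$) and to show that among the primitive sixth roots of unity only one of them, namely $\omega_6 = e^{2\pi\sqrt{-1}/6}$, actually occurs. Since $\Alb(V_6) = H^0(V_6,\Omega^1_{V_6})^*/H_1(V_6,\ZZ)$ and the $T$-action on $\Alb(V_6)$ is induced by the contragredient of the $T$-action on $H^0(V_6,\Omega^1_{V_6})$, it suffices to show that $T$ acts on $H^0(V_6,\Omega^1_{V_6})$ with a single eigenvalue, a primitive sixth root of unity. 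First I would recall, as in the divisibility theorem of \cite{Duke}, the decomposition of $H^1(V_6)$ into eigensheaves $H^1(V_6)_\chi$ over the characters $\chi$ of $\mu_6$, and the identification of the $\chi$-eigenspace of $H^{1,0}$ with an ideal sheaf cohomology group $H^1(\PP^2, \cJ_{\chi}(\ell_\chi))$ (superabundance of certain adjoint-type linear systems passing through the cusps with multiplicities dictated by the local spectra). The point is that the eigenspaces for $\chi = \omega_6^{\pm 1}$ carry the relevant superabundance and the eigenspaces for the other nontrivial characters vanish, so the only eigenvalues of $T_*$ on $H^1(V_6)$ are $\omega_6$ and $\omega_6^{-1}$.

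The key input for the vanishing is the local analysis via Lemma~\ref{comparemhs} and its Corollary: for a node the Milnor fiber cohomology is trivial, and for a cusp $x^2=y^3$ the monodromy eigenvalues are the primitive sixth roots of unity, coming from the spectrum $\{1/6 + 1/2 \cdot\frac{?}{}, \dots\}$ — precisely the spectral numbers $\frac16,\frac13,\frac23,\frac56$, of which $\frac16,\frac56$ lie in $(0,1)$ with the Hodge-graded piece $\Gr^0_F\Gr^W_1$ corresponding to $\frac16$. Thus the only contributions to $H^{1,0}(V_6)$ come in the eigenspaces indexed by a primitive sixth root of unity, and in fact — and this is the crux — only the $\omega_6$-eigenspace (not $\omega_6^{-1}$, nor $\omega_6^{\pm 2}, \omega_6^{\pm 3}$) can be nonzero, because the other characters yield linear systems of negative degree (or empty adjoint conditions) on $\PP^2$ by the degree count $d = 6k$. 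This is where the assumption $6 \mid d$ and the precise normalization of $\omega_6$ enter: the character $\chi = \omega_6^j$ contributes to $H^{1,0}$ only through $H^1(\PP^2,\cJ(\lceil \text{something involving } jd/6 \rceil - 3))$, and for $j \neq 1$ one checks the degree is too small.

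I would then assemble: $T_*$ acts on $\Alb(V_6)$, whose tangent space at $0$ is $H^0(V_6,\Omega^1_{V_6})^*$, with eigenvalues the inverses of those on $H^0(V_6,\Omega^1_{V_6})$; by the above the latter are all equal to $\omega_6$, hence $T_*$ on $\Alb(V_6)$ has the single eigenvalue $\omega_6^{-1}$, a primitive sixth root of unity, and since a complex torus automorphism and its action on the tangent space have the same eigenvalues (counted with multiplicity, over $\CC$), the lemma follows. The main obstacle I expect is the careful bookkeeping of the Hodge-theoretic refinement: one must verify that the $\omega_6^{-1}$-eigenspace of $H^{1,0}$ vanishes (so that $H^1(V_6)$ is not merely supported on primitive sixth roots of unity but on a \emph{single} one at the level of holomorphic forms) — in other words, that the de Rham cohomology in that eigenspace is carried entirely by $H^{0,1}$, the conjugate piece, which is where the Hodge-theoretic sharpening of \cite{Duke} referenced in the introduction is doing real work; handling the non-reduced/local mixed Hodge structure identification from Lemma~\ref{comparemhs} cleanly, rather than just the cohomology groups, is the delicate part.
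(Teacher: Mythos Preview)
Your strategy via the eigenspace decomposition of $H^{1,0}(V_6)$ and ideal-sheaf cohomology is workable, but it is a different route from the paper's. The paper argues topologically: it takes a tubular neighborhood $T(\cC)$ of $\cC$, uses the surjection $H_1((T(\cC)\setminus\cC)_6,\QQ)\to H_1(V_6,\QQ)$ induced by inclusion, and decomposes the left side by Mayer--Vietoris into contributions from small balls $B_P$ around the singular points and from the smooth locus $U$. The point is that these surjections are morphisms of mixed Hodge structures, so any eigenvalue of $T$ on $\Gr^0_F H^1(V_6)$ must already occur on $\Gr^0_F\Gr^W_1 H_1((B_P\setminus\cC)_6)$ for some cusp $P$; since for the cusp the unique spectral number in $(0,1)$ is $\tfrac{5}{6}$, that piece carries a single eigenvalue and the lemma follows. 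No superabundance computation enters. Your approach has the advantage of making the link to the linear system of curves through the cusps explicit, while the paper's localization argument generalizes more directly to the $\delta$-essential singularities treated in Section~\ref{sec-delta-curves}.

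Two details in your sketch need correction. First, the cusp $x^2=y^3$ has Milnor number $2$, hence exactly two spectral numbers, $\tfrac{5}{6}$ and $\tfrac{7}{6}$; your list $\tfrac{1}{6},\tfrac{1}{3},\tfrac{2}{3},\tfrac{5}{6}$ is wrong, and the unique element in $(0,1)$ contributing to $\Gr^0_F\Gr^W_1$ is $\tfrac{5}{6}$, not $\tfrac{1}{6}$. Second, the vanishing of the remaining eigenspaces of $H^{1,0}(V_6)$ is not because ``the degree is too small'': for characters $\omega_6^{j}$ with $j/6$ below the log-canonical threshold $\tfrac{5}{6}$ of the cusp (and always for nodes), the adjoint-type ideal is all of $\cO_{\PP^2}$, so the relevant group is $H^1(\PP^2,\cO_{\PP^2}(jk-3))$, which vanishes by the standard cohomology of projective space; only for the character with $j/6\ge\tfrac{5}{6}$ does the ideal become nontrivial and the superabundance potentially nonzero. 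With these fixes your argument goes through.
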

 
\begin{proof} It follows from~\cite{Duke}
that the Alexander module of a cyclic 
multiple plane branched over a curve with nodes and cusps, up to 
summands $\QQ[t,t^{-1}]/(t-1)$, is isomorphic to a direct sum  
\begin{equation}\label{alexandersum}
[\QQ[t,t^{-1}]/(t^2-t+1)]^s.
\end{equation}
More precisely, the proof of the Divisibility Theorem~\cite{Duke} shows that 
the Alexander module is a quotient of the direct sum of 
the Alexander modules of all singularities or equivalently the 
direct sum of the homology 
of Milnor fibers of singularities with module structure given by 
the action of the monodromy (in the present case of the cusp $x^2=y^3$
the local Alexander module is just one summand in~\eqref{alexandersum}).

Since the divisibility result is stated in~\cite{Duke}
with assumption of irreducibility of the ramification locus $\cC$ 
of~\eqref{multipleplane} we shall review the argument.
Denote by $T(\cC)$ a tubular neighborhood of $\cC$
in $\PP^2$ and consider the surjection 
\begin{equation}\label{surjectivityone}
\pi_1(T(\cC)\setminus \cC) \rightarrow \pi_1(\PP^2\setminus \cC)
\end{equation}
induced by embedding (the surjectivity is a consequence 
of the surjectivity, 
for $D \subset T(\cC)$, of the map 
$\pi_1(D \setminus D \cap \cC) \rightarrow \pi_1(\PP^2 \setminus \cC)$,
in turn, following from the weak Lefschetz theorem).
The surjection (\ref{surjectivityone}) induces the surjection 
of the homology of 6-fold cyclic coverings: 
\begin{equation}
H_1((T(\cC)\setminus \cC)_6,\ZZ) \rightarrow H_1((\PP^2\setminus \cC)_6,\ZZ).
\end{equation}
On the other hand, the covering space $(T(\cC)\setminus \cC)_6$ decomposes as 
\begin{equation}
(T(\cC)\setminus \cC)_6=\bigcup_P (B_P\setminus \cC)_6 \cup (U\setminus \cC)_6,
\end{equation}
where $B_i$ are small regular neighborhoods 
of all singular points $P$ of $\cC$, $U$ is the regular 
neighborhood of the smooth locus of $\cC$ and the subscript 
designates the 6-fold cyclic cover. The corresponding Mayer Vietoris
sequence yields a surjection: 
\begin{equation}\label{mayervietoris}
\oplus_P H_1((B_P\setminus \cC)_6,\QQ) \oplus H_1((U\setminus \cC)_6,\QQ)
\rightarrow H_1((T(\cC)\setminus \cC)_6,\QQ).
\end{equation}
One can view the cohomology
of the 6-fold cover of $(B_P\setminus \cC)_6$ 
as the cohomology of the punctured neighborhood of 
the part of exceptional curve in resolution of 6-fold cover of $B_P$ 
branched over $B_P\cap \cC$ outside of proper preimage of $\cC$. 
For any cusp $P$ the deck transformation acting on
$\Gr^0_F\Gr^W_1(H_1((B_P\setminus \cC)_6))$ has as eigenvalue the same primitive
root of unity of degree six (corresponding to the element in the spectrum 
of $x^2=y^3$ in the interval $(0,1)$;
in the case of the action on cohomology this part contains 
only $5 \over 6$).

Each of the other spaces $(U\setminus \cC)_6, (T(\cC)\setminus \cC)_6$ 
appearing in~\eqref{mayervietoris}
can be viewed as a punctured neighborhood of a quasi-projective variety and 
as such also supports the canonical mixed Hodge structure
(cf.~\cite{durfee})
Moreover, the sequence~\eqref{mayervietoris} is a sequence 
of mixed Hodge structures. 
It is shown in~\cite{Duke} that the map 
\begin{equation}\label{inclusion}
H_1((T(\cC)\setminus \cC)_6,\QQ) \rightarrow H_1(V_6,\QQ),
\end{equation}
is surjective and that the composition of~\eqref{mayervietoris}
and~\eqref{inclusion} takes $H_1((U\setminus \cC)_6,\QQ)$ to zero.

Both sequences~\eqref{mayervietoris}
and~\eqref{inclusion} are equivariant with respect to the deck transformations.
The sequence~\eqref{inclusion} yields 
that any eigenvalues of $T$ acting on $\Gr^0_FH^1(V_6)$
must be an eigenvalue of $T$ acting on 
$\Gr^0_F\Gr_1^WH^1((T(\cC)\setminus \cC)_6,\QQ)$ which is different from 1
i.e. is the eigenvalue of monodromy acting on Milnor fiber of 
cusp and corresponding to the part of spectrum in $(0,1)$.

This implies that the action of the 
deck transformation on $\Alb(V_6)$ is the multiplication by the same
root of unity of degree $6$ as well (exponent of the only element 
of the spectrum belonging to $(0,1)$).
Also note that an $i$-th power of this automorphism ($1 \le i <6$)
has zero as the only fixed point, since the existence of a fixed point 
for such $i$ would yield an eigenspace of the monodromy corresponding 
to an eigenvalue which is not a primitive root of degree~6.
\end{proof}

Theorem~\ref{roanth}, when applied to $X=\Alb(V_6)$ with
$g=q=h^{1,0}(V_6)$, and $d=6$,
shows (since the condition on the fixed point sets follows from the explicit
form of the action on $H^0(V_6,\Omega^1_{V_6})$) that
$k=q$ and that each component
$X_1,...,X_q$ is the elliptic curve with an automorphism of order six 
i.e. the curve $E_0$ with $j=0$. 

Hence we obtain the following:

\begin{prop}\label{mapstoE}
Let $V_6$ be a 6-fold cyclic multiple plane with branching 
curve having only nodes and cusps as singularities with irregularity $q$. 
Then 
$$\Alb(V_6)=E_0^q.$$
In particular for the set of morphisms taking $P_0$ to the zero of $E_0$ one has:
$$\Mor(V_6,E_0)=\Hom(E_0^q,E_0).$$ 
\end{prop}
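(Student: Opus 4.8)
The plan is to combine the preceding analysis of the $\ZZ_6$-action on $\Alb(V_6)$ with Roan's Decomposition Theorem (Theorem~\ref{roanth}) applied to the pair $(X,\alpha)=(\Alb(V_6),T_*)$. First I would record the input data: Lemma~\ref{albaneseeigenvalues} tells us that the automorphism $\alpha:=T_*$ of $X:=\Alb(V_6)$ has a single eigenvalue on the tangent space at the origin, namely a fixed primitive sixth root of unity $\omega_6$ (the one attached to the spectral number of the cusp lying in $(0,1)$), and moreover that each power $\alpha^i$ with $1\le i<6$ has only the origin as a fixed point, so that $X_{\langle\alpha\rangle}$ is finite. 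Since $X$ has dimension $g=q=h^{1,0}(V_6)$, and the collection $\Phi_\alpha$ of eigenvalues of $\rho_\alpha$ on the universal cover consists of $q$ copies of $\omega_6$, we need to check the numerical hypothesis $\#\Phi_\alpha=\varphi(6)/2=1$. This requires interpreting $\Phi_\alpha$ as a \emph{set} of eigenvalues occurring in a half-period decomposition — which is exactly the CM-type normalization in Roan's theorem — so that the single distinct value $\omega_6$ gives $\#\Phi_\alpha=1$ as demanded. I would spell out this bookkeeping carefully, since it is the one place where a reader could object that "$q$ copies of $\omega_6$" naively has $\#\Phi_\alpha=1$ only after the right definition is unwound.

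With the hypotheses of Theorem~\ref{roanth} verified, the theorem yields $\varphi(6)\mid 2g$, i.e. $2\mid 2q$ (vacuous), and a factorization
$$
\Alb(V_6)=X_1\times\cdots\times X_k,\qquad k=\frac{2g}{\varphi(6)}=\frac{2q}{2}=q,
$$
where each $X_j$ is an abelian variety of CM-type $(\QQ(\xi_6),\Phi_\alpha)$ with $\Phi_\alpha=\{\omega_6\}$, and $\alpha$ decomposes as a product of automorphisms of the $X_j$ each given by a primitive sixth root of unity. Each $X_j$ is then an abelian variety of dimension $\varphi(6)/2=1$, i.e.\ an elliptic curve, carrying an automorphism of order $6$; but the only elliptic curve admitting an order-$6$ automorphism fixing the origin is the one with $j$-invariant $0$, namely $E_0$ (with affine model $u^2+v^3=1$, as fixed in the excerpt). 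Hence $\Alb(V_6)\cong E_0^{\,q}$, which is the first assertion.

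For the second assertion, I would invoke the universal property of the Albanese map: any morphism $V_6\to E_0$ sending the chosen base point $P_0$ (a point of maximal ramification, as fixed above) to $0\in E_0$ factors uniquely through $alb:V_6\to\Alb(V_6)$ followed by a homomorphism $\Alb(V_6)\to E_0$ of abelian varieties (a morphism of abelian varieties fixing the origin is automatically a homomorphism). Conversely, composing $alb$ with any homomorphism $E_0^{\,q}\to E_0$ gives such a morphism, and these two constructions are mutually inverse. Therefore $\Mor(V_6,E_0)=\Hom(\Alb(V_6),E_0)=\Hom(E_0^{\,q},E_0)$, as claimed. The main obstacle I anticipate is purely the verification that the numerical conditions $\#\Phi_\alpha=\varphi(d)/2$ and the finiteness of $X_{\langle\alpha\rangle}$ are met with the correct conventions — once those are in place, Roan's theorem and the universal property of $\Alb$ do the rest essentially formally.
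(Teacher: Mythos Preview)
Your proposal is correct and follows essentially the same approach as the paper: apply Roan's Decomposition Theorem to $(\Alb(V_6),T_*)$ with the hypotheses supplied by Lemma~\ref{albaneseeigenvalues}, identify each one-dimensional factor as the elliptic curve with an order-six automorphism (hence $E_0$), and deduce the statement about $\Mor(V_6,E_0)$ from the universal property of the Albanese map. The paper's own argument is the short paragraph immediately preceding the proposition and is more terse, but the logical structure is identical.
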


Next we shall reformulate this proposition in terms of the Mordell-Weil group of 
split elliptic threefold $W=V_6 \times E_0$ viewed as elliptic
curve defined over~$\CC(V_6)$ (and which can be viewed as a cover of $W_F$ 
cf. section \ref{pencilsonmultiple}).

Recall that given an extension $K/k$ and an abelian variety $A$ over $K$, one has an abelian variety 
$B$ over $k$ (called the \emph{Chow trace}) and homomorphism $\tau: B \rightarrow A$ defined over $k$ 
such that for any extension $E/k$ disjoint from $K$ and abelian variety $C$ over $E$ and 
$\alpha: C \rightarrow A$ over $KE$ exist $\alpha': C \rightarrow B$ such that $\alpha=\tau \circ \alpha'$
(cf.~\cite[p.~97]{Langneron}).

In particular, to $W$ over $\CC(V_6)$,
one can associate Chow trace which is the elliptic curve $B$ over $\CC$ 
such that 
quotient of the group of $\CC(V_6)$-points of $W$ by the 
subgroup of $\CC$-points of $B$ is a finitely generated abelian group.
(Mordell-Weil Theorem cf.~\cite{Manin} and~\cite[Theorem 1]{Langneron}).
The Chow trace $B$ of $W$ is $E_0$ and the 
group $\tau B_{\CC}$ is the subgroup in $\CC(V_6)$ 
of points the corresponding 
to constant maps $V_6 \rightarrow E_0$. We shall denote by $\MW(W)$
the quotient of the group of $\CC(V_6)$ points by the subgroup 
of torsion point and the points of Chow trace.
As already mentioned, $\MW(W)$ is a finitely generated abelian group. 
Proposition~\ref{mapstoE} can be reformulated in terms of the Mordell-Weil group as follows:

\begin{cor}\label{cor-mw}
The Mordell-Weil group of $\CC(V_6)$-points of $V_6 \times E_0$ is 
a free $\ZZ[\omega_6]$ module having rank $q$
(where $q$ is the irregularity of $V_6$).
\end{cor}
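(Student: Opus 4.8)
The corollary is a restatement of Proposition~\ref{mapstoE} in the language of Mordell--Weil groups, so the plan is to carry out this translation with care. First I would identify the group of $\CC(V_6)$-points of the constant elliptic threefold $W=V_6\times E_0$ (regarded as an elliptic curve over the function field $\CC(V_6)$ via the first projection) with the group $\Mor(V_6,E_0)$ of morphisms $V_6\to E_0$, equipped with the group law induced pointwise from the group structure of $E_0$. Indeed a $\CC(V_6)$-point is a rational section $V_6\dashrightarrow W$, equivalently a rational map from $V_6$ to $E_0$; since $V_6$ is smooth projective and $E_0$ is an abelian variety, every such rational map is in fact an everywhere-defined morphism.

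Next, using the universal property of the Albanese variety recalled above together with the rigidity of abelian varieties, every morphism $f\colon V_6\to E_0$ factors uniquely as $f=\bar f\circ alb$ with $\bar f\colon \Alb(V_6)\to E_0$ a group homomorphism followed by a translation; choosing the base point $P_0$ so that $alb(P_0)=0$, this yields a splitting
\[
\Mor(V_6,E_0)\;\cong\;\Hom(\Alb(V_6),E_0)\;\oplus\;E_0,
\]
in which the second summand is the group of constant morphisms. By Proposition~\ref{mapstoE} we have $\Alb(V_6)=E_0^q$, hence $\Hom(\Alb(V_6),E_0)=\Hom(E_0^q,E_0)\cong\End(E_0)^q$ via restriction to the $q$ factors. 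Since $E_0$ has $j$-invariant $0$ it has complex multiplication and $\End(E_0)=\ZZ[\omega_6]$; the $\ZZ[\omega_6]$-module structure on the Mordell--Weil group is the one induced by post-composing sections with the order-six automorphism $z\mapsto\omega_6 z$ of the fiber $E_0$, and under $\End(E_0)\cong\ZZ[\omega_6]$ this is multiplication by $\omega_6$. With respect to it, $\Hom(E_0^q,E_0)\cong\ZZ[\omega_6]^q$ is a free $\ZZ[\omega_6]$-module of rank $q$.

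Finally I would observe that passing from $\Mor(V_6,E_0)$ to $\MW(W)$, i.e.\ quotienting by the torsion subgroup and by the subgroup of points coming from the Chow trace, deletes exactly the $E_0$-summand above: the Chow trace of $W$ is $E_0$ and the $\CC(V_6)$-points it contributes are precisely the constant morphisms; moreover any torsion section is a morphism $V_6\to E_0$ with image in some finite subgroup $E_0[n]$, hence constant, so the torsion is already contained in the Chow-trace subgroup (in particular $\Hom(E_0^q,E_0)$ is torsion free). Therefore $\MW(W)=\Hom(E_0^q,E_0)=\ZZ[\omega_6]^q$. Since all the geometric input is concentrated in Proposition~\ref{mapstoE}, the only delicate points are the first step --- justifying that a rational section of the constant threefold is a genuine morphism to $E_0$ --- and checking that no nonconstant section becomes trivial, and no constant section survives, in $\MW(W)$.
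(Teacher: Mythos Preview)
Your proof is correct and follows essentially the same route as the paper: identify sections of $V_6\times E_0\to V_6$ with morphisms $V_6\to E_0$, quotient by the Chow trace (constant maps) to obtain $\Hom(\Alb(V_6),E_0)$, and then invoke Proposition~\ref{mapstoE} together with $\End(E_0)=\ZZ[\omega_6]$. The paper's argument is a terse two-line sketch of exactly this; you have simply supplied the details the paper omits, in particular the extension of rational sections to morphisms, the splitting $\Mor(V_6,E_0)\cong\Hom(\Alb(V_6),E_0)\oplus E_0$, and the observation that torsion sections are constant and hence already absorbed by the Chow-trace quotient.
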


\begin{proof} Non zero elements of this Mordell Weil group are represented
by the classes of non-constant sections of $V_6 \times E_0 \rightarrow V_6$.
Those corresponds to the maps $V_6 \rightarrow E_0$ up to translation.
Hence the corollary follows from Propositions \ref{mapstoE}.

\end{proof}

Next we shall return to the threefold $W_F$ which is a smooth birational
model of  
(\ref{equationWF}). We shall view it as an elliptic curve
over field $K=\CC(x,y)$. It splits over the field 
$K(F^{1 \over 6})=\CC(V_6)$.

\subsection{Elliptic pencils on multiple planes and $\PP^2$-points
of elliptic threefolds}\label{pencilsonmultiple}
We want to have an explicit correspondence between the elliptic pencils on $V_6$ and 
$\PP^2$ points of $W_F$ 
\footnote{Following classical terminology, by elliptic (resp. rational)
pencil we mean a morphism onto an elliptic (resp. rational) curve. An orbifold 
morphism onto a rational orbifold curve of elliptic type
induces an elliptic pencil cf. Remark~\ref{rem-elliptic}}. 
This is used in Theorem~\ref{irregularityMW} 
to obtain the relation between the Mordell Weil group of $\PP^2$-points 
of $W_F$ 
and Mordell Weil group of $\CC(V_6)$-points of split threefold 
$W=V_6 \times E_0$
and is based 
on Lemma~\ref{equationforw} below. Before we state it we shall 
introduce several notations.
Compactifications $W_F$ of the threefold  (\ref{equationWF}) 
have several useful biregular models 
in weighted projective spaces and their products which we shall derive.
We can view $E_0$ as the curve given by equation 
$u^3+v^2=w^6$ in weighted projective plane $\PP(2,3,1)$.
Let $\bar E_Q$ be surface in $\PP(2,3,1) \times \PP^1$ given by 
\begin{equation}\label{EQeq}
A^6(u^3+v^2)=w^6B^6.
\end{equation}
Projections to the factors we shall denote 
$pr_{\PP(2,3,1)}^{\bar E_Q}$ and $pr_{\PP^1}^{\bar E_Q}$ resp.
Clearly, the map $(u,v,w,A,B) \rightarrow (A^2u,A^3v,Bw,A,B)$ takes 
$\bar E_Q \subset \PP(2,3,1) \times \PP^1$ to the surface in $\PP(2,3,1) \times \PP^1$ 
given by $u^2+v^3=w^6$ (no dependence on $(A,B)$),
which is isomorphic to $E_0 \times \PP^1$. The action of $\mu_6$ on $\bar E_Q$
corresponding to the standard action on $E_0$ and trivial action on $\PP^1$
is given by $(A,B) \rightarrow (A,\omega_6B)$.

We denote by $\bar V_6$ the biregular model of the cyclic cover of $\PP^2$ branched 
over $F=0$ and $z=0$, which is the surface in $\PP^2 \times \PP^1$ given by 
\begin{equation}\label{V6eq}
\bar V_6:=\{([x:y:z],[M:N])\in \PP^2 \times \PP^1 \mid z^{6k}M^6=N^6F(x,y,z)\}.
\end{equation}
 
The action of the deck transformation is given by $(M,N)
\rightarrow (M,\omega_6N)$. 
The projection on the first (resp. the second) factor will be denoted 
$pr_{\PP^2}^{\bar V_6}$ (resp.~$pr_{\PP^1}^{\bar V_6}$).

We shall consider the threefold
\begin{equation}\label{equationbarw} 
\bar W=\bar V_6 \times_{\PP^1} \bar E_Q 
\end{equation} 
with the fibered product taken relative to the maps
$pr_{\PP^1}^{\bar V_6}$ and $pr_{\PP^1}^{\bar E_Q}$ respectively
with coordinates of respective copies of $\PP^1$ identified using 
the relation:
\begin{equation}\label{identification}
{N \over M}={A \over B}.
\end{equation}
The birational equivalence between $\bar E_Q$ and $E_0 \times \PP^1$ yields the 
birational isomorphism: 
\begin{equation}
\bar W \rightarrow \bar V_6 \times E_0,
\end{equation}
i.e. $\bar W$ is a projective model of $W$.
\begin{lemma}\label{equationforw}
The threefold $\tilde W_F$ in $\PP(2,3,1,1,1)$ given by
\begin{equation}\label{formulaequationforw}
(u^3+v^2)z^{6(k-1)}=F(x,y,z)
\end{equation}
is birationally equivalent to $W/\mu_{6}$ with the
diagonal action of $\mu_6$. It contains the hypersurface
(\ref{equationWF}) 
as an open set i.e. is a model of $W_F$. 
\end{lemma}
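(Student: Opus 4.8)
The plan is to compute the quotient $W/\mu_6$ explicitly in coordinates and then match it against the hypersurface~\eqref{formulaequationforw}. First I would start from the projective model $\bar W = \bar V_6 \times_{\PP^1} \bar E_Q$ with the diagonal $\mu_6$-action: on $\bar V_6$ the deck transformation acts by $(M,N)\mapsto (M,\omega_6 N)$ (and trivially on $[x:y:z]$), and on $\bar E_Q$ it acts by $(A,B)\mapsto(A,\omega_6 B)$ (trivially on $(u,v,w)$). Under the identification~\eqref{identification}, $N/M = A/B$, so the diagonal action is consistent on the fibered product and one only needs to find a complete system of $\mu_6$-invariant rational functions. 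The ratios $x,y,z$ (on $\PP^2$) are already invariant, and from $\bar E_Q$ the functions $u,v,w$ are invariant; the one remaining direction — the $\PP^1$-coordinate $t := N/M = A/B$ — transforms by $t\mapsto \omega_6 t$, hence $t^6$ is invariant, and by~\eqref{V6eq} and~\eqref{EQeq}, $t^6 = N^6/M^6 = z^{6k}/F(x,y,z)$ on $\bar V_6$ and $t^6 = A^6/B^6$ with $A^6(u^3+v^2)=w^6B^6$, i.e. $t^6 = w^6/(u^3+v^2)$ on $\bar E_Q$. Equating these two expressions for the invariant $t^6$ gives the relation
\begin{equation*}
\frac{z^{6k}}{F(x,y,z)} = \frac{w^6}{u^3+v^2},
\end{equation*}
which after clearing denominators and rescaling the fiber coordinate (absorbing $w$, or equivalently setting $w=z$ up to a suitable weighted rescaling of $u,v$ by powers of $z$) becomes $(u^3+v^2)z^{6(k-1)} = F(x,y,z)$. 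This is precisely~\eqref{formulaequationforw}, sitting in $\PP(2,3,1,1,1)$ with weights $2,3$ on $u,v$ and $1$ on $x,y,z$.

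Next I would verify that this computation actually identifies $\tilde W_F$ birationally with $W/\mu_6$ and not merely with some quotient of it: the field of $\mu_6$-invariant rational functions on $\bar W$ is generated over $\CC$ by $x/z, y/z$ (pulled back from $\PP^2$), by $u,v$ suitably normalized, and by the single invariant $t^6$, and the displayed relation shows this invariant field is exactly the function field of $\tilde W_F$. Since $\bar W$ is birational to $\bar V_6\times E_0 = W$ via the map collapsing $\bar E_Q$ to $E_0\times\PP^1$ (this birational equivalence is already recorded in the excerpt just before the Lemma), we conclude $\tilde W_F \sim_{\mathrm{bir}} W/\mu_6$ with the diagonal action, as claimed.

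Finally, to see that $\tilde W_F$ is a model of $W_F$ — i.e. contains the affine threefold~\eqref{equationWF} as a Zariski-open set — I would dehomogenize~\eqref{formulaequationforw} by setting $z=1$, which yields $u^3+v^2 = F(x,y,1)$; this differs from $u^2+v^3=F(x,y,1)$ only by the harmless interchange of the roles of $u$ and $v$ (equivalently, a relabeling of the two special weighted coordinates), so the affine chart $z\neq 0$ of $\tilde W_F$ is isomorphic to $W_F^\circ$. The main obstacle, and the step deserving the most care, is the passage from the formal invariant-function computation to an honest birational isomorphism of varieties: one must check that no extra identifications or missing components are introduced when clearing denominators in $z^{6k}/F = w^6/(u^3+v^2)$ — in particular, that the locus $F=0$ (where $t^6$ blows up) and the locus $z=0$ are handled correctly by the compactifications $\bar V_6$ and $\bar E_Q$, which is exactly why these specific projective models in $\PP^2\times\PP^1$ and $\PP(2,3,1)\times\PP^1$ were set up beforehand. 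Since the statement only claims birational equivalence, it suffices to work on a dense open set where $F\neq 0$ and $z\neq 0$, on which the argument above is a genuine isomorphism of the coordinate rings of invariants.
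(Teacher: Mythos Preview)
Your proof is correct and follows essentially the same route as the paper's: both eliminate the $\PP^1$-coordinate from the two defining equations of the fibered product $\bar W$ --- you by equating the two expressions for the invariant $t^6 = (N/M)^6 = (A/B)^6$, the paper by observing that the projection $\bar W \to \PP^2 \times \PP(2,3,1)$ has image the locus where the $2\times 2$ determinant of the linear system in $M^6, N^6$ vanishes --- and both then note that the resulting hypersurface and~\eqref{formulaequationforw} share the affine chart $W_F^\circ$. The only cosmetic difference is that the paper first lands on the intermediate equation $z^{6k}(u^3+v^2)=w^6F(x,y,z)$ in $\PP^2\times\PP(2,3,1)$ before passing to $\PP(2,3,1,1,1)$, whereas you absorb $w$ in one step; the underlying algebra is identical.
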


\begin{proof} 
The equations of $\bar W$ in $\PP^2 \times \PP^1 \times \PP(2,3,1) \times \PP^1$ 
are (\ref{EQeq}), (\ref{V6eq}) and (\ref{identification}).
Hence $\bar W$ is biregular to complete intersection in 
$\PP^2 \times \PP(2,3,1) \times \PP^1$ given by
\begin{equation}\label{completeint}
z^{6k}M^6=N^6F(x,y,z) \ \ \ N^6(u^3+v^2)=w^6M^6.
\end{equation}
Projection of this complete intersection on $\PP^2 \times \PP(3,2,1)$ has 
as its image the set of points $(u,v,w,x,y,z)$ for which the determinant 
of the system (\ref{completeint}) in $M^6,N^6$
is zero i.e. is the hypersurface given by:
\begin{equation}\label{intermideq}
z^{6k}(u^3+v^2)=w^6F(x,y,z).
\end{equation}
Clearly this projection is a cyclic $\mu_6$-covering of the
hypersurface (\ref{intermideq}).
Alternatively it is the quotient of (\ref{completeint})
by the action of $\mu_6$ given by $(M,N)\rightarrow (M,\omega_6N)$. 
Moreover it shows that (\ref{intermideq}) is the quotient of $\bar W$ by 
the {\it diagonal} (as follows from (\ref{identification})) action on $\mu_6$.
Finally both, the hypersurface in the statement of Lemma~\ref{equationforw}
and (\ref{intermideq}) have $W_F$ as Zariski open subset which 
yields the statement.
\end{proof}

\bigskip

Next we compare the $\CC(V_6)$-points of $E_0$ and $\PP^2$ points of elliptic 
threefold $W_F$. For $V_6$, which is a non-singular model of $\bar V_6$, 
the $\CC$-split elliptic threefold $W=V_6 \times E_0$, as above, is 
the elliptic curve over $\CC(V_6)$ obtained by field extension $\CC(V_6)/\CC$.
$\CC(V_6)$-points of $W$ correspond to rational maps $V_6 \rightarrow E_0$
which also can be viewed as sections of the projection $W \rightarrow V_6$ 
by associating with a map its graph in $W$ and vice versa. The corresponding 
Mordell-Weil group was calculated in Corollary~\ref{cor-mw}. 

The group $\mu_6$ acts (diagonally) on $W$ and hence also on $\MW(W)$.
We denote the invariant subgroup as $\MW(W)^{\mu_6}$. 
To a $\mu_6$-invariant element $V_6 \rightarrow W$ of $\MW(W)$
corresponds $\mu_6$-invariant $V_6$-point $\phi: V_6 \rightarrow E_0$ in the 
sense that $\phi(\gamma (v))=\gamma \phi(v), (\gamma \in \mu_6)$.
Its graph $\Gamma_{\phi} \subset V_6 \times E_0$ is $\mu_6$-invariant and taking 
the $\mu_6$-quotient yields the map: 
\begin{equation}
\PP^2=V_6/{\mu_6}=\Gamma/\mu_6 \rightarrow W/\mu_6=W_F.
\end{equation}
Hence we obtain a $\PP^2$-point of $W_F$.
Vice versa a section $\PP^2 \rightarrow W_F$ lifts to a birational map
of cyclic covers i.e. the map $V_6 \rightarrow W=V_6 \times E_0$
(which follows from comparison of the complements 
to the branching loci of both coverings).
This yields an equivariant (i.e. commuting with $\mu_6$-action) 
 elliptic pencil.

\begin{theorem}\label{irregularityMW}
The correspondence $\phi \mapsto \Gamma_{\phi}/\mu_6$ induces an isomorphism 
$\MW(W)^{\mu_6} \rightarrow \MW(W_F)$. In particular $\rk \MW(W_F)=2q(V_6)$.
\end{theorem}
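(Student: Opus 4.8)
The plan is to promote the set-theoretic correspondence sketched before the statement — $\mu_6$-invariant maps $\phi: V_6 \to E_0$ descend to $\PP^2$-points of $W_F$, and conversely $\PP^2$-points lift — into a bona fide group isomorphism $\MW(W)^{\mu_6} \to \MW(W_F)$, and then to compute the rank of the left-hand side using Corollary~\ref{cor-mw}. First I would check that $\phi \mapsto \Gamma_\phi/\mu_6$ is a group homomorphism. The addition on $\MW(W)$ is the fibrewise addition coming from the elliptic curve structure of $E_0$ over $\CC(V_6)$; since $\mu_6$ acts on $W = V_6 \times E_0$ diagonally and in particular by automorphisms of $E_0$ that fix the origin (these are the automorphisms of order $6$ of $E_0$), the $\mu_6$-action is by group automorphisms of $\MW(W)$, so $\MW(W)^{\mu_6}$ is a subgroup. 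Descending a $\mu_6$-equivariant section of $W \to V_6$ to a section of $W_F = W/\mu_6 \to V_6/\mu_6 = \PP^2$ is compatible with fibrewise addition because the quotient map $W \to W_F$ is a morphism of elliptic fibrations over the (birational) identity $V_6/\mu_6 = \PP^2$ away from the branch locus; hence the map respects $+$. It also kills torsion and Chow-trace points on the nose (constant maps go to constant maps), so it descends to a well-defined homomorphism on the Mordell–Weil quotients.

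Next I would establish injectivity and surjectivity. For injectivity: if $\Gamma_\phi/\mu_6$ is the zero section of $W_F$ (i.e. torsion or Chow trace), then pulling back along $V_6 \to \PP^2$ — which is a dominant, generically finite (degree $6$) map — forces $\phi$ to be torsion or a constant map already on $V_6$, because the pullback of a section is computed fibre-by-fibre over the generic point and $\CC(V_6)/\CC(x,y)$ is a field extension; so $[\phi] = 0$ in $\MW(W)$, and being $\mu_6$-invariant it is $0$ in $\MW(W)^{\mu_6}$. For surjectivity I would invoke the lifting argument already indicated: a section $s: \PP^2 \to W_F$ restricts over $\PP^2 \setminus (\text{branch locus})$ and pulls back to a section of $W \to V_6$ over the corresponding open set, hence extends (valuative criterion / Néron model) to a genuine $\CC(V_6)$-point $\phi$ of $W$; this $\phi$ is automatically $\mu_6$-equivariant because $s$ descends from it, and its image under our map is $[s]$. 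Care must be taken that the lift is unique up to the $\mu_6$-action, which is why the image lands in $\MW(W)^{\mu_6}$ and not just $\MW(W)$; this uniqueness comes from the fact that $\CC(V_6) = \CC(x,y)(F^{1/6})$ is Galois over $\CC(x,y)$ with group $\mu_6$, and the two coverings have matching complements of their branch loci.

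Finally, for the rank count: by Corollary~\ref{cor-mw}, $\MW(W)$ is a free $\ZZ[\omega_6]$-module of rank $q = q(V_6)$, so as an abelian group it is free of rank $2q$. The $\mu_6$-action on $\MW(W)$ is precisely multiplication by $\omega_6$ in the $\ZZ[\omega_6]$-module structure — this is exactly the content of Lemma~\ref{albaneseeigenvalues} and the ensuing application of Roan's Theorem~\ref{roanth}, which identify the action of the deck transformation $T$ on $\Alb(V_6) = E_0^q$ with the diagonal automorphism of order six, i.e. multiplication by the chosen primitive sixth root of unity on each $E_0$-factor, hence on $\Hom(E_0^q, E_0)$ by the same root. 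Therefore $\MW(W)^{\mu_6}$ is the submodule of $\ZZ[\omega_6]^q$ fixed by multiplication by $\omega_6$, which is $0$. That would give rank $0$, which is wrong — so in fact the relevant action on $\MW(W_F)$ must be through a \emph{different} character: the point of the quotient by the \emph{diagonal} $\mu_6$ is that $\mu_6$ acts on $E_0$ as well, twisting the identification, so that the invariants we want are sections $\phi$ with $\phi(\gamma v) = \gamma \phi(v)$, i.e. $T$-eigenvectors for the eigenvalue $\omega_6$ rather than $1$. Since $\MW(W) \otimes \QQ = (\ZZ[\omega_6]^q)\otimes\QQ = \QQ(\omega_6)^q$ as a module over the group ring $\QQ[\mu_6]$ with $T$ acting by $\omega_6$, \emph{every} element is such an eigenvector, so $\MW(W)^{\mu_6}$ (in this twisted sense) equals all of $\MW(W)$, of rank $2q$. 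Hence $\rk \MW(W_F) = 2q(V_6)$. The main obstacle I anticipate is making the descent/lift correspondence precise across the branch loci — showing that sections of the two elliptic threefolds correspond bijectively and group-compatibly once one removes codimension-one branch divisors and then re-extends via Néron models — and, relatedly, pinning down exactly which $\mu_6$-character governs the equivariance so that the eigenvalue bookkeeping yields $2q$ rather than $0$; everything else is formal once Corollary~\ref{cor-mw} and Lemma~\ref{albaneseeigenvalues} are in hand.
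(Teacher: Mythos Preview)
Your proposal is correct and follows essentially the same approach as the paper: the descent/lift correspondence described just before the statement is upgraded to a group isomorphism, and the rank is then read off from Corollary~\ref{cor-mw} together with the identification of the $\mu_6$-action. The paper dispatches the rank computation more directly than your self-correcting detour: the diagonal $\mu_6$-action on $\phi\in\Hom(E_0^q,E_0)$ sends $\phi$ to $\omega_6\circ\phi\circ\omega_6^{-1}$, and since $\End(E_0)=\ZZ[\omega_6]$ is commutative this action is trivial, so $\MW(W)^{\mu_6}=\MW(W)$ has $\ZZ$-rank $2q$ immediately.
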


\begin{proof}
It is enough to check that for two equivariant maps $\phi_1,\phi_2$
in the same coset of the Chow trace the map $V \rightarrow E_0$ given by 
$v \rightarrow \phi_1(v)-\phi_2(v)$ is constant with image $0 \in E_0$. 
Indeed this 
is a map to a point which due to equivariance should be the $\mu_6$-fixed point 
of $E_0$ i.e. zero.
 
To see the second part, since $\End(E_0)=\ZZ[\omega_6]$, 
we infer from Proposition \ref{mapstoE} or Corollary \ref{cor-mw},
that 
\begin{equation} 
Mor(V_6,E_0)=Hom(E_0^q,E_0)=\ZZ[\omega_6]^q 
\end{equation} 
The action of the group $\mu_6$ on $\Alb(V_6)$ is via multiplication
by $\omega_6$ (i.e. as is in the case of local Albanese of the
cusp). Since all elements in $\End(E_0)$ commute with
complex multiplication the action of $\mu_6$ on $\End(E_0)$
is trivial and one obtains $2q$ as the $\ZZ$ rank of $\MW(W_F)$.
 
\end{proof}

\begin{proof} (of Theorem~\ref{theoremonWF}). It follows 
immediately from
Theorem~\ref{irregularityMW} and the well-known relation between 
the degree of the Alexander polynomial and the irregularity of cyclic multiple planes 
(cf.~\cite{Duke}).
\end{proof}

\begin{cor}(cf.~\cite{kloo})
For a degree 6 curve with 6,7,8, and 9 cusps, the $\ZZ$-ranks of $\MW(W_F)$
are equal to 0,2,4, and 6 respectively.
\end{cor}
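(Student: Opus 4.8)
The plan is to reduce the statement to Theorem~\ref{theoremonWF} and then to the classical computation of the Alexander polynomial of a cuspidal sextic in terms of the superabundance of conics through its cusps. Let $\cC$ be an irreducible sextic whose only singularities are $c$ cusps (and possibly nodes), $c\in\{6,7,8,9\}$, and let $W_F$ be the associated elliptic threefold. By Theorem~\ref{theoremonWF}, $\rk\MW(W_F)=\deg\Delta_{\cC}(t)$, so it suffices to compute $\deg\Delta_{\cC}(t)$. As recalled before Theorem~\ref{roanth} (cf.~\cite{Duke}), $\deg\Delta_{\cC}(t)=2q(V_6)$, where $q(V_6)$ is the irregularity of the sextic cyclic multiple plane, and by \cite{Duke} this irregularity equals the superabundance $s:=h^1(\PP^2,\cJ_Z(2))$ of the linear system of conics through the set $Z$ of the $c$ cusps (the relevant degree $2$ being $d-3-\frac{d}{6}$ with $d=6$, and $\cJ_Z$ the ideal sheaf of the reduced subscheme $Z$).

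The next step is a dimension count. From the short exact sequence $0\to\cJ_Z(2)\to\cO_{\PP^2}(2)\to\cO_Z\to 0$, together with $h^0(\cO_{\PP^2}(2))=6$, $h^0(\cO_Z)=c$ and $H^1(\PP^2,\cO_{\PP^2}(2))=0$, one obtains $s=h^0(\cJ_Z(2))-6+c$. Thus everything reduces to deciding whether some conic passes through all $c$ cusps. This is where a B\'ezout estimate enters: if an irreducible conic $Q$ passed through every cusp of $\cC$, then $Q\neq\cC$ and, since at each cusp $p$ one has $(\cC\cdot Q)_p\ge\mathrm{mult}_p(\cC)=2$, B\'ezout gives $12=\cC\cdot Q\ge 2c$, i.e. $c\le 6$; and a reducible conic decomposes into two lines, each meeting $\cC$ in at most $6$ points and hence passing through at most $3$ cusps, so again $c\le 6$. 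Consequently, for $c\in\{7,8,9\}$ no conic passes through the cusps, hence $h^0(\cJ_Z(2))=0$, so $s=c-6$ and $\rk\MW(W_F)=\deg\Delta_{\cC}(t)=2(c-6)=2,4,6$ respectively.

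It remains to treat $c=6$, and I expect this to be the only genuine subtlety, since it is the single case not forced by the B\'ezout bound: here the answer depends on the mutual position of the six cusps. For the curves under consideration, no conic passes through the six cusps — equivalently $\Delta_{\cC}(t)=1$, which is the situation studied in \cite{kloo} — whence $h^0(\cJ_Z(2))=0$, $s=0$, and $\rk\MW(W_F)=0$. By contrast, for Zariski's six-cuspidal sextic, whose cusps do lie on a (necessarily unique) conic, one would instead get $h^0(\cJ_Z(2))=1$, $s=1$, and rank $2$; so to read the corollary as an unconditional equality one restricts to the appropriate family, or simply quotes the Alexander polynomials of the sextics of \cite{kloo}, namely $1$, $t^2-t+1$, $(t^2-t+1)^2$, $(t^2-t+1)^3$ for $6,7,8,9$ cusps, and invokes Theorem~\ref{theoremonWF}.
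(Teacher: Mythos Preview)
Your argument is correct and, like the paper, rests on Theorem~\ref{theoremonWF} to identify $\rk\MW(W_F)$ with $\deg\Delta_{\cC}$. The paper, however, gives no further argument for the corollary: it simply quotes the Alexander polynomials of the relevant sextics as ``readily available'' (see the remark in the Introduction about \cite{kloo}) and reads off the ranks. You instead \emph{derive} these polynomials from the superabundance description $s=h^1(\cJ_Z(2))$ via the exact sequence $0\to\cJ_Z(2)\to\cO_{\PP^2}(2)\to\cO_Z\to 0$ and a B\'ezout bound ruling out conics through $\ge 7$ cusps. This is a genuinely different, more self-contained route; it has the advantage of explaining \emph{why} the ranks are $2(c-6)$ for $c\ge 7$ rather than appealing to an external computation.

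You also correctly flag the one point the paper glosses over: for $c=6$ the statement is not literally true for every six-cuspidal sextic, since Zariski's torus-type sextic (cusps on a conic) has $\Delta_{\cC}=t^2-t+1$ and hence rank $2$. The corollary must be read for the non-torus-type member of the Zariski pair (equivalently, the curves treated in \cite{kloo}), exactly as you say.
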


\subsection{A bound of the rank of Mordell-Weil group of an elliptic threefold}
\begin{theorem}\label{boundcusps}
If $d=6k$ is the degree of a homogeneous polynomial $F\in \CC[x,y,z]$,
for which the corresponding curve 
$\cC:=\{F=0\}$ has only nodes and cusps as singularities then
the $\ZZ$-rank of the Mordell-Weil group of $W_F$ satisfies:
$$\rk \MW(W_F) \le \frac{5}{3}d-2.$$ 
\end{theorem}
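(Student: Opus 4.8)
The plan is to reduce the bound on $\rk \MW(W_F)$ to a bound on the Mordell--Weil rank of an associated \emph{elliptic surface}, and then apply the Shioda--Tate formula. By Theorem~\ref{theoremonWF} (equivalently, by Theorem~\ref{irregularityMW}), the quantity $\rk \MW(W_F) = 2q(V_6)$, so it suffices to bound $q = q(V_6)$, the irregularity of the six-fold cyclic multiple plane branched over $\cC$. First I would pass from the threefold situation to a surface one: a generic pencil of lines in $\PP^2$ pulls back, after resolving, to a fibration of $V_6$ over $\PP^1$ whose generic fibre is a smooth curve $C$ — the six-fold cyclic cover of $\PP^1$ branched at the $d = 6k$ points where a generic line meets $\cC$. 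Equivalently, and more to the point, restricting the split elliptic threefold $W = V_6 \times E_0$ (or $W_F$ itself) to such a generic line gives an elliptic surface $S \to \PP^1$ with constant $j$-invariant $0$, whose fibre over the generic point of $\PP^1$ is exactly the elliptic curve $u^2 + v^3 = F(\text{line})$ over $\CC(t)$. Since $\MW(W_F)$ injects (after tensoring with $\QQ$) into the Mordell--Weil group of the generic-line elliptic surface — sections of the threefold restrict to sections of the surface, and a section trivial on a generic line is trivial — we get $\rk \MW(W_F) \le \rk \MW(S)$.

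Next I would bound $\rk \MW(S)$ by the Shioda--Tate formula (cf.~\cite{shioda}): for an elliptic surface $S \to \PP^1$ with section,
$$
\rk \MW(S) = \rho(S) - 2 - \sum_{v}(m_v - 1),
$$
where $\rho$ is the Picard number and $m_v$ the number of components of the fibre over $v$. In particular $\rk \MW(S) \le h^{1,1}(S) - 2 - \sum_v(m_v-1) \le h^{1,1}(S) - 2$. So the task becomes: compute (or bound from above) $h^{1,1}$ of a smooth model of the elliptic surface attached to $F$ restricted to a generic line, where $F$ has degree $d = 6k$ and $\cC$ has $\kappa$ cusps and some nodes. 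The surface $S$ is birational to $\{u^2 + v^3 = F(\ell(t))\}$, a Weierstrass fibration of the form $y^2 = x^3 + A(t)$ with $\deg A = d = 6k$; its Euler characteristic and Hodge numbers are governed by the minimal Weierstrass model. For such a rational/elliptic/general-type elliptic surface one has $e(S) = 12 n$ and $h^{1,1}(S) = 10n - 2 p_g$ type formulas, where $n$ is determined by $d$; the key point is that all the singular fibres of $y^2 = x^3 + (\text{degree } 6k)$ are of additive type II, IV, I$_0^*$, IV$^*$, II$^*$ (since $j \equiv 0$), and the generic line meets each cusp of $\cC$ transversally so the contribution to $\sum(m_v-1)$ from a cusp-fibre is positive but bounded below. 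The arithmetic will give $\rk \MW(S) \le \frac{5}{3}d - 2$ after collecting the fibre contributions, which is where the $\tfrac53$ comes from: each unit of $d$ contributes $10/6 = 5/3$ to the ``$10n$'' term of $h^{1,1}$, and the $-2$ survives from Shioda--Tate.

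The main obstacle — and the step I would spend the most care on — is the bookkeeping in the Shioda--Tate estimate: one must verify that the generic line really can be chosen so that (i) it avoids the nodes of $\cC$ entirely or meets them transversally, (ii) it meets each cusp transversally (so that the local equation of the branch divisor at the meeting point is $t$, not $t^2$ or worse), and (iii) the resulting Weierstrass fibration is already minimal, or else account for the blow-downs needed to make it so. One also has to be careful that what we want is an \emph{upper} bound, so it is legitimate to discard the $\sum_v(m_v - 1)$ term, replacing $\rho(S)$ by $h^{1,1}(S)$; the only genuinely quantitative input is then $h^{1,1}$ of the minimal elliptic surface with $j$-invariant $0$ and discriminant degree tied to $d = 6k$, which is a standard computation via $\chi(\mathcal{O}_S)$ and Noether's formula. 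Assembling these, together with $\rk\MW(W_F) = 2q(V_6) \le \rk\MW(S) \le \tfrac53 d - 2$, completes the argument.
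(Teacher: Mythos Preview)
Your approach is essentially the paper's: restrict the elliptic threefold to a generic line $\ell \subset \PP^2$, use injectivity $\MW(W_F) \hookrightarrow \MW(\pi^{-1}(\ell))$, and bound the latter by $h^{1,1}-2$ via Shioda--Tate and Noether's formula, obtaining $10k-2 = \tfrac{5}{3}d-2$.

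One clarification: the bookkeeping you flag as ``the main obstacle'' is simpler than you fear. A \emph{generic} line $\ell$ avoids the finite singular locus of $\cC$ entirely, so it meets $\cC$ transversally in $d=6k$ distinct smooth points. Hence the elliptic surface $u^2+v^3 = F|_\ell$ has exactly $6k$ singular fibres, each of Kodaira type~II (an irreducible cuspidal cubic, $m_v=1$), so $\sum_v(m_v-1)=0$ and the Weierstrass model is already minimal. This gives $e_{\text{top}}=2\cdot 6k=12k$, $\chi(\cO_S)=k$, and $h^{1,1}=10k$ directly; there is no need to track how $\ell$ meets cusps or nodes, nor to worry about additive fibre types beyond~II. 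The detour through $2q(V_6)$ in your first paragraph is also unnecessary here, since the paper argues directly with $\MW(W_F)$.
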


\begin{proof} 
Let $\ell \subset \PP^2(x,y,z)$ be a generic line in the base of the elliptic threefold 
$\pi: W_F \rightarrow \PP^2$. 
Then $\MW(W_F) \rightarrow \MW(\pi^{-1}(\ell))$ is injective 
(cf.~\cite{kloo}) and a bound on 
$\rk\MW(\pi^{-1}(\ell))$ therefore yields a bound on rank of $\MW(W_F)$.

On the other hand:
$$h^{1,1}(\pi^{-1}(\ell)) \ge rk NS(\pi^{-1}(\ell)) \ge rk \MW(\pi^{-1}(\ell)).$$

The surface $\pi^{-1}(\ell)$ is a hypersurface in the weighted
projective space $\PP^3(3k, 2k,1,1)$, which is a quotient 
of the surface in $\PP^3$ given by the equation:
$${\cal W}: \ \ P^{2}=Q^{3}+F\vert_{\ell}$$ 
with the action of the group 
$$G=\ZZ_{3k} \oplus \ZZ_{2k} \ \ \ (P,Q,a,b) \rightarrow (\omega_{3k}^i P,
\omega_{2k}^j Q, a,b)$$
($a,b$ are the coordinates of $\ell$).
Since the fixed points are outside of $\cal W$ the surface $\pi^{-1}(\ell)$ 
is non singular. 

In particular the elliptic surface 
$\pi^{-1}(\ell)$ gives an elliptic fibration with $6k=\deg F$ each degenerate fiber
being isomorphic to a cubic curve with one cusp. Hence 
$$e_{\text{top}}(\pi^{-1}(\ell))=12k$$ i.e. by Noether formula: 
$$\chi(\pi^{-1}(\ell))=k={d \over 6}$$
Hence, using the calculation of the Hodge diamond of an elliptic 
surface (cf.~\cite[\S6.9]{shioda}) and Tate-Shioda formula 
(cf.~\cite[Cor. 6.13]{shioda}) we derive: 
\begin{equation}
\rk \MW(\pi^{-1}(\ell))=\rho(\pi^{-1}(\ell))-2\le h^{1,1}-2=10\chi-2=10k-2,
\end{equation}
which gives the claim.
\end{proof}
 
This immediately yields:

\begin{cor}\label{inequality}
The degree of the Alexander polynomial of an irreducible curve $\cC$ of degree $d=6k$, 
whose singularities are only nodes and cusps satisfies:
$$\deg \Delta_\cC \le {5 \over 3}d-2.$$
\end{cor}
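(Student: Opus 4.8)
The plan is to obtain the inequality as a purely formal consequence of the two results just established in this section, with no new geometric input. First I would dispose of the case $6\nmid d$: by Proposition~\ref{prop-divisibility} the local Alexander polynomials of nodes and cusps have only primitive sixth roots of unity among their roots, so for an irreducible curve $\cC$ with only these singularities $\Delta_\cC(t)=(t^2-t+1)^s$ for some $s\ge 0$, and $s>0$ forces $6\mid d$. Hence when $6\nmid d$ one has $\deg\Delta_\cC=0$ and the bound holds vacuously, which reduces the statement to the case $d=6k$ — precisely the hypothesis of Theorems~\ref{theoremonWF} and~\ref{boundcusps}, so there is no loss in assuming it from the start.

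With $d=6k$, the argument is a one-step concatenation of the two theorems. Theorem~\ref{theoremonWF} identifies $\deg\Delta_\cC$ with $\rk\MW(W_F)$, the $\ZZ$-rank of the Mordell--Weil group over $\CC(x,y)$ of the elliptic threefold $u^2+v^3=F(x,y,1)$, while Theorem~\ref{boundcusps} bounds that rank by $\frac{5}{3}d-2$; composing gives $\deg\Delta_\cC=\rk\MW(W_F)\le\frac{5}{3}d-2$, which is the claim. The only arithmetic I would check explicitly is that the bound of Theorem~\ref{boundcusps} really has exactly this shape: restricting $\pi\colon W_F\to\PP^2$ to a generic line $\ell$ yields a smooth elliptic surface $\pi^{-1}(\ell)$ with $6k$ cuspidal (type II) fibres, so $e_{\text{top}}(\pi^{-1}(\ell))=12k$ and $\chi=k$ by Noether's formula; since all degenerate fibres are irreducible, the Shioda--Tate formula gives $\rk\MW(\pi^{-1}(\ell))=\rho-2\le h^{1,1}-2=10\chi-2=10k-2=\frac{5}{3}d-2$, and the injectivity $\MW(W_F)\hookrightarrow\MW(\pi^{-1}(\ell))$ carries this over to the threefold.

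I therefore do not expect any genuine obstacle at the level of the corollary itself: all the difficulty has already been absorbed into Theorems~\ref{theoremonWF} and~\ref{boundcusps}. If one wanted to improve the statement, the real bottleneck is the crude estimate $\rk\MW(\pi^{-1}(\ell))\le h^{1,1}-2$ used inside Theorem~\ref{boundcusps}, since the constant $\frac{5}{3}$ comes straight out of $h^{1,1}=10\chi$; sharpening it would require controlling the N\'eron--Severi rank of $\pi^{-1}(\ell)$ beyond what the (here trivial) reducible-fibre corrections of Shioda--Tate provide, which is presumably why the bound is not expected to be sharp in general.
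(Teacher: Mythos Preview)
Your proof is correct and follows exactly the paper's approach: the corollary is stated as an immediate consequence of Theorems~\ref{theoremonWF} and~\ref{boundcusps}, and you concatenate them just as the paper does. Your additional discussion of the case $6\nmid d$ and your recap of the Shioda--Tate computation inside Theorem~\ref{boundcusps} are accurate but not needed, since the corollary already assumes $d=6k$ and the bound on $\rk\MW(W_F)$ is already in hand.
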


\section{Quasi-toric relations corresponding to cuspidal curves}
\label{sec-qt-rels}
\subsection{Quasi-toric relations and $\PP^2$-points of elliptic threefolds}

In this section we shall present an explicit relation between the quasi-toric 
relations introduced in section~\ref{quasitoricsection} 
and the elements of the Mordell Weil group of~$W_F$. Such a relation 
is expected since it was shown earlier that quasi-toric relations 
correspond to elliptic pencils on cyclic multiple planes and 
orbifold pencils (cf. section~\ref{quasitoricsection}). 
On the other hand in the past section such pencils were 
related to the Mordell-Weil groups.

Let us consider map of (\ref{formulaequationforw}) onto $\PP^2$ induced by 
the projection centered at $x=y=z=0$
\begin{equation}
\label{eq-proj}
\begin{matrix}
[u,v,x,y,z]\in & \PP(2,3,1,1,1)\setminus \{[u,v,0,0,0]\}\\
\downarrow & \downarrow\\
[x,y,z] & \PP^2
\end{matrix}
\end{equation}
Note that a rational section of this
projection is given by 
\begin{equation}
s(x,y,z)=[f(x,y,z),g(x,y,z),x\tilde h(x,y,z),y\tilde h(x,y,z),z\tilde h(x,y,z)]
\end{equation}
where
$2(\deg \tilde h+1)=\deg f$ and $3(\deg \tilde h+1)=\deg g$, satisfying 
$$(z\tilde h)^{6(d-1)}\left( {f^3}+{g^2}\right)=(\tilde h^d)^6F$$ and hence
$$z^{6(d-1)}\left(f^3+g^2\right)=\tilde h^6 F.$$
Since $F$ can be chosen not to be divisible by $z$, by the unique 
factorization property of $\CC[x,y,z]$, 
$h:=\frac{\tilde h}{z^{6(d-1)}}\in \CC[x,y,z]$, and hence
$$f^3+g^2=h^6 F$$
is a quasi-toric relation of $F$.

Conversely, given any quasi-toric relation $f^3+g^2= h^6F$, for certain 
$f,g,h\in \CC[x,y,z]$ such that $2(\deg h+d)=\deg f$ and $3(\deg h+d)=\deg g$, 
the map
$s(x,y,z)=[f(x,y,z),g(x,y,z),xz^{d-1}h(x,y,z),yz^{d-1}h(x,y,z),z^dh(x,y,z)]$
results in a section of the projection~(\ref{eq-proj}).

As a consequence, we obtain the following:

\begin{prop}
\label{prop-number-qt}
The degree of the Alexander polynomial is equal to the number of 
equivalence classes of 
quasi-toric 
relations which correspond to independent elements (over $\ZZ[\omega_6]$) of the 
Mordell-Weil group of $W_F$ given by equation~(\ref{formulaequationforw}).
\end{prop}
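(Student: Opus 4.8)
The statement asserts a numerical equality: $\deg\Delta_\cC$ equals the number of equivalence classes of quasi-toric relations that form a $\ZZ[\omega_6]$-independent set inside $\MW(W_F)$. The natural approach is to assemble the pieces already established: Theorem~\ref{theoremonWF} (equivalently Theorem~\ref{irregularityMW}) gives $\rk_\ZZ\MW(W_F)=2q=\deg\Delta_\cC$, where $\MW(W_F)$ is a module over $\End(E_0)=\ZZ[\omega_6]$ of $\ZZ[\omega_6]$-rank $q$; and the section/quasi-toric dictionary just spelled out before the proposition identifies $\CC(x,y)$-points of $W_F$ (equivalently, rational sections of the projection~\eqref{eq-proj}) with quasi-toric relations $f^3+g^2=h^6F$, modulo the appropriate normalization.

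First I would make the dictionary precise at the level of groups. A rational section of~\eqref{eq-proj} is, up to the torsion-and-Chow-trace identifications defining $\MW(W_F)$, the same thing as an element of $\MW(W_F)$; the displayed computation shows every such section produces a quasi-toric triple $(f,g,h)$ with $f^3+g^2=h^6F$, and conversely. Two sections coincide in $\MW(W_F)$ exactly when the associated quasi-toric relations induce the same orbifold map~\eqref{eq-map} to $\hat S_{(2,3,6)}$, i.e. differ by a rational function $\lambda$ as in~\eqref{eq-qt-equiv}; this is precisely the equivalence of quasi-toric relations from the definition in section~\ref{quasitoricsection}. So equivalence classes of $(2,3,6)$ quasi-toric relations of $F$ are in bijection with (non-torsion, non-Chow-trace) elements of $\MW(W_F)$ — more carefully, with $\MW(W_F)$ itself once one quotients by the trivial relation.

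Next I would translate "$\ZZ[\omega_6]$-independent set of quasi-toric relations" into "$\ZZ[\omega_6]$-basis of $\MW(W_F)$ as a $\ZZ[\omega_6]$-module." By Corollary~\ref{cor-mw} and Theorem~\ref{irregularityMW}, $\MW(W_F)\cong\MW(W)^{\mu_6}$ is a free $\ZZ[\omega_6]$-module of rank $q$ (freeness following since it sits as the $\mu_6$-invariants inside the free module $\MW(W)\cong\ZZ[\omega_6]^q$, on which $\mu_6$ acts through $\End(E_0)$; the invariant submodule of a free module over a PID $\ZZ[\omega_6]$ is free). A maximal $\ZZ[\omega_6]$-independent subset therefore has exactly $q$ elements. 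Under the bijection of the previous step these correspond to $q$ equivalence classes of quasi-toric relations that are independent over $\ZZ[\omega_6]$. Since $\deg\Delta_\cC=2q$... wait — here I must be careful: the proposition says the \emph{degree} equals the \emph{number} of such classes, so the intended counting must be of $\ZZ$-independent classes, or the number $q=\tfrac12\deg\Delta_\cC$ paired with the $\ZZ[\omega_6]$-structure; I would state the correspondence so that the $2q$ real generators (equivalently the $q$ complex ones with their $\omega_6$-multiples, giving $2q$ $\ZZ$-independent relations) match $\deg\Delta_\cC=2q$ exactly, reconciling the bookkeeping with Theorem~\ref{theoremonWF}.

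**Main obstacle.** The delicate point is not the algebra of free $\ZZ[\omega_6]$-modules but verifying that the section $\leftrightarrow$ quasi-toric-relation correspondence is compatible with the group structures and the equivalence relations on both sides — i.e. that adding sections in $\MW(W_F)$ corresponds to a well-defined operation on equivalence classes of quasi-toric relations, and that "independent over $\ZZ[\omega_6]$" has the same meaning on both sides. Making the degree/count match ($2q$ versus $q$) forces a precise convention: whether one counts $\ZZ$-independent or $\ZZ[\omega_6]$-independent classes. I expect the cleanest route is to phrase everything through $\MW(W_F)$, invoke $\rk_\ZZ\MW(W_F)=\deg\Delta_\cC$ from Theorem~\ref{theoremonWF}, and then simply record that a $\ZZ$-basis of $\MW(W_F)/\text{torsion}$ — which has $2q=\deg\Delta_\cC$ elements — transports to the asserted family of quasi-toric relations via the explicit formulas $s(x,y,z)=[f,g,xz^{d-1}h,yz^{d-1}h,z^dh]$ displayed above; uniqueness of factorization in $\CC[x,y,z]$ (already used to clear the $z$-powers) guarantees the triple $(f,g,h)$ is recovered unambiguously up to equivalence.
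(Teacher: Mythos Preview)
Your approach is essentially the paper's own: the proposition is stated there with no separate proof, only the sentence ``As a consequence, we obtain the following,'' immediately after the explicit dictionary between rational sections of the projection~\eqref{eq-proj} and quasi-toric relations $f^3+g^2=h^6F$; the numerical content then comes straight from Theorem~\ref{theoremonWF} (equivalently Theorem~\ref{irregularityMW}), exactly as you outline.

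Your flagged ``main obstacle'' about $2q$ versus $q$ is a genuine ambiguity in the proposition's wording rather than a gap in your reasoning. The paper itself resolves it a few lines later in Corollary~\ref{cor-gen-primitive}, where the convention is made explicit: there are $q$ generators $\sigma_1,\dots,\sigma_q$ with $\MW(W_F)=\ZZ[\omega_6]\sigma_1\oplus\cdots\oplus\ZZ[\omega_6]\sigma_q$ and $\deg\Delta_\cC=2q$. So the intended reading of the proposition is that a maximal $\ZZ[\omega_6]$-independent family of quasi-toric relations has $q$ elements, and the degree $2q$ of the Alexander polynomial equals twice that number (equivalently, the $\ZZ$-rank of the module they generate). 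Your instinct to phrase everything through $\rk_\ZZ\MW(W_F)=\deg\Delta_\cC$ and then transport a $\ZZ$-basis is the cleanest way to make the count unambiguous.
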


Also, using Theorem~\ref{irregularityMW} and projection~(\ref{eq-proj}) one can give formulas 
for the additive structure in $\MW(W_F)$. By Corollary~\ref{cor-mw}, 
it is enough just to give the action of $\ZZ[\omega_6]$ on the sections in $\MW(W_F)$ 
and the addition.
Consider sections $\gs_1:=(\frac{f_2}{h_1^2},\frac{f_3}{h_1^3},x,y,z)$ and 
$\gs_2:=(\frac{g_2}{h_2^2},\frac{g_3}{h_2^3},x,y,z)$ of $W_F^{\circ}$ 
and assume for 
simplicity that $h_1=h_2=1$. Then one has the following.

\begin{prop}
Under the above conditions,
\begin{equation}\label{eq-action}
\omega_6 \gs_1=\left(\omega_6 f_2,- f_3,x,y,z\right),
\end{equation}
also, if $\gs_1\neq \gs_2$
\begin{equation}\label{eq-sum}
\begin{matrix}
\gs_1+\gs_2=
\left(\frac{g_2f_2^2+g_2^2f_2+2g_3f_3-2F}{(f_2-g_2)^2},
\frac{3f_2g_2(f_3g_2-g_3f_2)+(f_3-g_3)(g_3f_3-3F)}{(f_2-g_2)^3},x,y,z\right).
\end{matrix}
\end{equation}
Otherwise
\begin{equation}\label{eq-double}
\begin{matrix}
2\gs_1=
\left(-f_2\frac{9f_2^3+8f_3^2}{4f_3^2},
-\frac{27f_2^6+36f_2^3f_3^2+8f_3^4}{8f_3^3},x,y,z\right).
\end{matrix} 
\end{equation} 
\end{prop}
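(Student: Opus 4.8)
The plan is to verify the two formulas \eqref{eq-sum} and \eqref{eq-double} directly from the classical group law on the Weierstrass cubic, adapted to the normalization $u^2+v^3=F$ used here. First I would recall that for the affine elliptic curve $W_F^\circ$ over $K=\CC(x,y)$, a section written as $\gs=(u,v,x,y,z)$ satisfies $u^2 = v^3 + (-F)$ \emph{wait}: our equation is $u^2+v^3=F$, so I would rewrite it as $u^2 = -v^3 + F$, or better, after the substitution $v\mapsto -v$, as $u^2 = v^3 + F$. This is not quite standard Weierstrass form ($y^2 = x^3 + ax + b$), but it is the form $y^2 = x^3 + b$ with $b=F$, which is exactly the $j=0$ curve; the standard chord-and-tangent construction still applies verbatim since all one needs is that the curve is a smooth plane cubic with the point at infinity as the identity. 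So the task reduces to specializing the textbook addition formulas for $y^2 = x^3 + b$ to our two sections.

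Next I would carry out the chord step. For $\gs_1=(f_2,f_3,x,y,z)$ and $\gs_2=(g_2,g_3,x,y,z)$ with $\gs_1\neq\gs_2$ (and $f_2\neq g_2$ so the line through them is not vertical), the slope is $\lambda = (f_3-g_3)/(f_2-g_2)$ in the coordinates where $u$ plays the role of $y$ and $v$ the role of $x$; here I am using that in the present paper's convention the first coordinate $u$ is the one that gets squared. Substituting $u=\lambda(v-f_2)+f_3$ into $u^2=v^3+F$ gives a cubic in $v$ whose three roots are $f_2$, $g_2$, and the $v$-coordinate of $-(\gs_1+\gs_2)$; by Vieta, that third root is $\lambda^2 - f_2 - g_2$. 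The $u$-coordinate of $\gs_1+\gs_2$ is then obtained by negating (the inverse on $y^2=x^3+b$ is $(v,u)\mapsto(v,-u)$) and substituting back. Expanding $\lambda^2-f_2-g_2 = \frac{(f_3-g_3)^2}{(f_2-g_2)^2} - (f_2+g_2)$ and using $f_3^2 = f_2^3+F$, $g_3^2=g_2^3+F$ to eliminate the squared terms in the numerator, together with the factorization $f_2^3-g_2^3 = (f_2-g_2)(f_2^2+f_2g_2+g_2^2)$, I expect the stated expressions to drop out after collecting terms over the common denominators $(f_2-g_2)^2$ and $(f_2-g_2)^3$. The key algebraic identities to keep in hand throughout are precisely $f_3^2-f_2^3 = g_3^2-g_2^3 = F$; every appearance of $F$ in the final formulas traces back to one of these substitutions.

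For the tangent step \eqref{eq-double}, the procedure is identical except that the slope is computed by implicit differentiation of $u^2=v^3+F$ at $\gs_1$: $2u\,du = 3v^2\,dv$, so $\lambda = \frac{3f_2^2}{2f_3}$. Then the $v$-coordinate of $2\gs_1$ is $\lambda^2 - 2f_2 = \frac{9f_2^4}{4f_3^2} - 2f_2 = -f_2\frac{8f_3^2 - 9f_2^3}{4f_3^2}$, and here one uses $f_3^2 = f_2^3 + F$ only if one wants to re-express things, but in fact the stated formula $-f_2\frac{9f_2^3+8f_3^2}{4f_3^2}$ suggests writing $8f_3^2 - 9f_2^3 = 8f_3^2 - 9f_2^3$; comparing with the paper's sign I would double-check the orientation convention, since the paper's equation is $u^2+v^3=F$ rather than $u^2-v^3=F$, which flips a sign in $\lambda^2 - 2f_2$ versus $\lambda^2+2f_2$. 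This sign-bookkeeping between the two possible conventions $u^2+v^3=\pm F$ is the one genuinely error-prone point; once it is pinned down, the $u$-coordinate of $2\gs_1$ follows by the same negate-and-substitute step, and the quartic numerator $27f_2^6+36f_2^3f_3^2+8f_3^4$ appears as the expansion of $(\lambda^3 - \ldots)$ type expression.

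Finally I would record the $\ZZ[\omega_6]$-action \eqref{eq-action}: since $\omega_6 = \frac{1+\sqrt{-3}}{2}$ acts on $E_0\colon u^2=v^3+1$ (in the flipped sign convention) by an order-six automorphism, and such an automorphism must act on the cotangent line $\Gr^0_F$ by multiplication by $\omega_6$ (this is exactly the eigenvalue computed in Lemma~\ref{albaneseeigenvalues}), the induced map on coordinates is $(u,v)\mapsto(\zeta_3^{\pm 1} u, -v)$ up to the choice of primitive cube root, which is what \eqref{eq-action} asserts with the normalization $\omega_6 f_2$ in the first slot and $-f_3$ in the second; I would verify it preserves the defining equation $u^2+v^3=F$ directly, namely $(\omega_6 f_2)^2 + (-f_3)^3 = \omega_6^2 f_2^2 - f_3^3$, and check that this equals $F$ after using $\omega_6^2 = \omega_6 - 1$ and... here again a sign/cube-root consistency check is needed, but it is routine. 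The main obstacle, then, is not any deep input — everything is the classical group law plus Vieta — but rather maintaining a consistent sign convention across the ``$+$'' in $u^2+v^3=F$ versus the standard Weierstrass ``$-$'', and matching the specific primitive sixth root chosen in the paper.
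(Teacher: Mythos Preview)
Your approach is exactly the paper's: the paper's proof consists of the single sentence ``The formulas can be easily obtained from the well-known formulas of the group law in $E_0$ (cf.~[Silverman, Chapter III.3])'', together with the remark that $\omega_6(u,v)=(\omega_6 u,-v)$ gives \eqref{eq-action}. You are simply unpacking what that citation entails --- the chord-and-tangent construction on $y^2=x^3+b$, Vieta on the resulting cubic, and the substitutions $f_3^2-f_2^3=g_3^2-g_2^3=F$ --- and you have correctly located the only genuine hazard, namely the bookkeeping between the paper's convention $u^2+v^3=F$ (equivalently $u^3+v^2=F$ in the weighted-projective model) and the standard Weierstrass sign.
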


\begin{proof}
Since $\omega_6 (u,v)=(\omega_6 u,-v)$, one
obtains~(\ref{eq-action}). The formulas can be easily obtained from the well-known formulas
of the group law in $E_0$ (cf.~\cite[Chapter III.3]{silverman}).
\end{proof}

Next we shall describe a geometric property of generators or the Mordell Weil 
group viewed as elliptic pencils.

\subsection{Primitive quasi-toric relations and orbifold maps}

An alternative way to see that every quasi-toric relation of $F$ contributes to the degree of
its Alexander polynomial comes from the theory of orbifold surfaces and orbifold morphisms 
reviewed in  section ~\ref{orbifoldcurves}. 

As mentioned before, the elements of the Mordell-Weil 
group $\MW(W_F)$ are represented by 
$\mu_6$-equivariant 
surjective maps $V_6 \rightarrow E_0$ (which we called elliptic pencils). 
To end this section, we will relate 
those which have irreducible generic members to generators of the 
Mordell-Weil group.

\begin{dfn}
We call an elliptic pencil $V_6 \rightarrow E_0$ \emph{primitive} if its generic fiber is irreducible.
\end{dfn}

One has the following result.

\begin{prop}\label{prop-primitive}
An elliptic pencil $f:V_6 \rightarrow E_0$ is non-primitive if and only if there exists an elliptic 
non-injective homomorphism $\sigma \in \Hom(E_0,E_0)$ such that $f=\sigma \circ \varphi$.
\end{prop}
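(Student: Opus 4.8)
The plan is to exploit the Stein factorization of the elliptic pencil $f:V_6\to E_0$ together with the structure of $E_0$ as an elliptic curve, which forces the intermediate curve in the factorization to be elliptic and isogenous to $E_0$. First I would recall that $f$ is by definition non-primitive precisely when its generic fiber is reducible, equivalently when $f$ does not coincide with its Stein factorization. So write the Stein factorization of $f$ as $V_6\xrightarrow{\;g\;}C\xrightarrow{\;\sigma\;}E_0$, where $g$ has connected fibers and $\sigma$ is a finite morphism of degree $\ge 2$ between smooth projective curves. The heart of the matter is to identify $C$: since $\sigma:C\to E_0$ is a finite cover of an elliptic curve, Riemann--Hurwitz gives $2g(C)-2=(\deg\sigma)(2g(E_0)-2)+\deg R=\deg R\ge 0$ for the ramification divisor $R$; if I can show $\sigma$ is unramified then $g(C)=1$, so $C$ is an elliptic curve and $\sigma$ is an isogeny, i.e. a non-injective element of $\mathrm{Hom}(C,E_0)$ after choosing base points — and then, choosing the origin of $C$ so that $\sigma$ is a homomorphism and noting $g$ itself is then an elliptic pencil, we get $f=\sigma\circ g$ with $\sigma$ non-injective, as desired. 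The converse direction is immediate: if $f=\sigma\circ\varphi$ with $\sigma$ non-injective (hence of degree $\ge 2$), then the generic fiber of $f$ is the disjoint union of $\deg\sigma\ge 2$ fibers of $\varphi$, hence reducible, so $f$ is non-primitive.

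The step I expect to be the main obstacle is establishing that $\sigma:C\to E_0$ is unramified (equivalently that $g(C)=1$ rather than $g(C)>1$). Here I would argue using the Albanese. By Proposition~\ref{mapstoE} we have $\Alb(V_6)=E_0^q$, and the composition $V_6\xrightarrow{g}C\hookrightarrow \Alb(C)$ factors through $\Alb(V_6)=E_0^q$ by universality of the Albanese; moreover the original $f$ also factors through $\Alb(V_6)$. If $g(C)\ge 2$, then $\Alb(C)=\mathrm{Jac}(C)$ is a simple-or-not abelian variety of dimension $\ge 2$ receiving a surjection (up to isogeny) from $E_0^q$, forcing $\mathrm{Jac}(C)$ to be isogenous to a product of copies of $E_0$; but then $C$ would be a curve of genus $\ge 2$ whose Jacobian is isogenous to $E_0^{g(C)}$, and composing with the quotient map $C\to E_0$ coming from one of the factors would contradict the connectedness of the generic fiber of $g$ unless that quotient map has degree one — pushing this through cleanly is the delicate point. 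A cleaner route, which I would prefer, is to observe that $f:V_6\to E_0$ being a pencil means the fibers of $f$ are (up to the finitely many multiple fibers) numerically proportional, so they define a single class in $H^2(V_6)$; the Stein map $g:V_6\to C$ has the same fibers reorganized, so its general fiber class is $\tfrac{1}{\deg\sigma}$ times that of $f$ in $H^2(V_6,\QQ)$, and the self-intersection of a fiber is zero in both cases — this gives no contradiction directly, so instead I would bound $g(C)$ via the fact that $V_6\to C$ composed with $\Alb(V_6)\twoheadrightarrow \Alb(C)$ shows $H^0(C,\Omega^1_C)$ embeds $\mu_6$-equivariantly into $H^0(V_6,\Omega^1_{V_6})$, on which $\mu_6$ acts by the single character $\omega_6$ (Lemma~\ref{albaneseeigenvalues}); hence $\mu_6$ acts on $H^0(C,\Omega^1_C)$ by $\omega_6$ as well, and the quotient $C/\mu_6=\PP^1$ (since $V_6/\mu_6=\PP^2$ maps to it) together with the character computation pins down $g(C)$ through the eigenspace decomposition of $H^0(C,\Omega^1_C)$ under $\mu_6$.

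In fact the simplest correct argument is the following, and this is what I would write up. The pencil $f$ factors as $V_6\xrightarrow{g}C\xrightarrow{\sigma}E_0$ with $g$ having connected fibers; $g$ is then itself a morphism to the curve $C$, and I claim $C$ has genus $1$ and hence $\sigma$ is an isogeny. Indeed, $f$ is $\mu_6$-equivariant for the standard action on $E_0$; by the universal property of Stein factorization the $\mu_6$-action descends to an action on $C$ making both $g$ and $\sigma$ equivariant, and $C/\mu_6$ is dominated by $V_6/\mu_6=\PP^2$, so $C/\mu_6\cong\PP^1$. Since $\sigma:C\to E_0$ is $\mu_6$-equivariant and dominant and the $\mu_6$-action on $E_0$ has order $6$ with quotient $\PP^1$, the action on $C$ also has order $6$; computing $e_{\text{top}}$ of the quotient map $C\to C/\mu_6=\PP^1$ — using that the ramification points of $C\to\PP^1$ lie over the ramification points of $E_0\to\PP^1$ (namely the three orbifold points of orders $2,3,6$), because $\sigma$ carries the $\mu_6$-fixed-point data of $C$ into that of $E_0$ — gives via Riemann--Hurwitz that $C$ is the cyclic cover of $\PP^1$ of the corresponding orbifold type $(2,3,6)$, i.e. $g(C)=1$ by Remark~\ref{rem-elliptic}. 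Then $\sigma:C\to E_0$ is a nonconstant map of elliptic curves, hence after translating so that $\sigma$ respects origins it is an isogeny, and $\deg\sigma\ge 2$ since $f$ was non-primitive, so $\sigma$ is non-injective and $f=\sigma\circ g$ with $g$ an elliptic pencil. This completes both directions.
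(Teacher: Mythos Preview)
Your overall strategy --- take a Stein factorization and use the $\mu_6$-equivariance to pin down the intermediate curve --- is exactly the paper's, and your converse direction is fine. The forward direction, however, has a genuine gap in the last paragraph. You take the Stein factorization $V_6\xrightarrow{g}C\xrightarrow{\sigma}E_0$ upstairs, descend the $\mu_6$-action to $C$, and then try to compute $g(C)$ by Riemann--Hurwitz for $C\to C/\mu_6=\PP^1$. The statement ``the ramification points of $C\to\PP^1$ lie over the ramification points of $E_0\to\PP^1$'' is correct (stabilizers only grow under $\sigma$), but it does not bound the ramification of $C\to C/\mu_6$: the induced map $\bar\sigma:C/\mu_6\to E_0/\mu_6$ has degree $\deg\sigma\ge 2$, so over each of the three orbifold points of $E_0/\mu_6$ there may be several orbifold points of $C/\mu_6$, each contributing its own ramification. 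Nothing you have written excludes, say, two points of orbifold order $3$ in $C/\mu_6$, which would push $g(C)$ above $1$. So the sentence ``gives via Riemann--Hurwitz that $C$ is the cyclic cover of $\PP^1$ of type $(2,3,6)$'' is unjustified as it stands.

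The paper closes this gap by taking the Stein factorization \emph{downstairs}: it forms $\tilde f:V_6/\mu_6\to E_0/\mu_6=\PP^1_{2,3,6}$ and factors it as $\PP^2\xrightarrow{\tilde\varphi}S\xrightarrow{\tilde\sigma}\PP^1_{2,3,6}$ with $\tilde\varphi$ primitive. Here $S=\PP^1$ because $\PP^2$ is rational, and the orbifold points of $S$ of orders $2$ and $3$ are genuine \emph{multiple} fibers of the plane-curve pencil $\tilde\varphi$ (the order-$6$ point is non-reduced but not multiple, since the branch curve sits inside it). Now Proposition~\ref{prop-bound-pencil} is the key extra input you are missing: a primitive pencil of plane curves has at most two multiple members, so $S$ has exactly the orbifold type $(2,3,6)$, and its associated degree-$6$ elliptic cover is $E_0$. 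Lifting back gives the desired factorization through $E_0$. If you want to keep your ``upstairs'' version, you need an analogue of this multiplicity bound --- for instance, combine $g(C)\ge 1$ (from $C\to E_0$) with the observation that the orbifold points of $C/\mu_6$ are exactly the multiple or branch-supporting fibers of the induced primitive pencil $\PP^2\to C/\mu_6$, and then invoke Proposition~\ref{prop-bound-pencil} there; or push your eigenvalue idea via Lemma~\ref{albaneseeigenvalues} to show the $\omega_6$-eigenspace of $H^0(C,\Omega^1_C)$ is all of $H^0(C,\Omega^1_C)$ and has dimension $1$. Either way, some additional input beyond ``ramification lies over ramification'' is required.
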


\begin{proof}
It is enough to check that the Stein factorization of a non-primitive pencil
$f:V_6 \rightarrow E_0$ factorizes through an elliptic curve $f=\sigma \circ \varphi$,
where $\sigma\in \Hom(E_0,E_0)$ and $\varphi$ is a primitive elliptic pencil. 

In order to see this, one can use the cyclic order-six action $\mu_6$ and obtain a pencil
$\tilde f:V_6/\mu_6 \to E_0/\mu_6=\PP^1_{2,3,6}$, where $\PP^1_{2,3,6}$ is $\PP^1$
with an orbifold structure 2, 3, 6. Using the Stein factorization on $\tilde f$
one obtains $V_6/\mu_6\ \rightmap{\tilde \varphi}\ S\ \rightmap{\tilde \sigma}\ \PP^1_{2,3,6}$,
where $\tilde f=\tilde \sigma \circ \tilde \varphi$. Since $V_6/\mu_6$ is a rational 
surface, one obtains that $S=\PP^1$ with an orbifold structure given by at least three
orbifold points of orders 2,3, and 6. The orbifold points of order 2 and 3 correspond to double
and triple fibers, whereas the order 6 point corresponds to a non-reduced (but not multiple) 
fiber of type $h^6F$ where $F$ is the branching locus of the 6-fold cover of $\PP^2$.
By Proposition~\ref{prop-bound-pencil}, pencils of curves cannot have more than two 
multiple members, hence $S=\PP^1_{2,3,6}$. The 6-fold cover $E_0$ of $S$ ramified with orders 
2, 3, and 6 on the three orbifold points of $S$ allows for the existence of a factorization of 
$f$, say $V_6\ \rightmap{\varphi}\ E_0\ \rightmap{\sigma}\ E_0$ induced by $\tilde \varphi$ 
and 
$\tilde \sigma$. Since $\tilde \varphi$ is primitive, the map $\varphi$ is also primitive.
\end{proof}

\begin{remark}
Note that the elliptic pencil obtained by $2\gs_1$ should be non-primitive, since it 
factors through $E_0\to E_0$, given by the degree 4 map $x\mapsto 2x$ (see Proposition~\ref{prop-primitive}).
In fact, $2\gs_1$ produces the following quasi-toric relation
$$
k_2^3+k_3^2+64f_3^6F=0,
$$
where $k_2:=f_2(9f_2^3+20f_3^2)$, $k_3:=(27f_2^6+36f_2^3f_3^2+8f_3^4)$ (from (\ref{eq-double})), 
and $\alpha k_2^3+ \beta k_3^2=H_{(\alpha,\beta)}(f_2^3,f_3^2)$ is non-irreducible since
$H_{(\alpha,\beta)}(x,y)=
1728(\beta-\alpha)x^2y^2+(576\alpha-512\beta)xy^3+729(\beta-\alpha)x^4+64\beta y^4+1944(\beta-\alpha)x^3y$
which decomposes into a product of four factors of type $(y-\lambda(\alpha,\beta)x)$, since 
$H_{(\alpha,\beta)}(x,y)$ is a homogeneous polynomial of degree four in $x,y$.

Note that $k_2^3$ and $k_3^2$ are the union of four members (counted with multiplicity) of the pencil 
$\alpha f_2^3+\beta f_3^2$. Also, note that $\alpha k_2^3+\beta k_3^2$ is
a non-primitive pencil whose generic member also 
consists of four members of $\alpha f_2^3+\beta f_3^2$.

In particular, the 4:1 map $\tilde \sigma:\PP^1 \rightarrow \PP^1$, given by
$$
\tilde \sigma([x:y:z]):=[x(9x-20y)^3:(27x^2-36xy+8y^2)^2]
$$
is such that $\tilde f=\tilde \sigma \circ \tilde \varphi$ is the Stein factorization of $\tilde f$, 
where $\tilde \varphi([x:y:z])=[f_2^3(x,y,z):-f_3^2(x,y,z)]$ and
$\tilde f([x:y:z])=[k_2^3(x,y,z):k_3^2(x,y,z)]$.
\end{remark}

Summarizing the previous results, one obtains.

\begin{cor}
\label{cor-gen-primitive}
Let $2q$ be the degree of the Alexander polynomial of $F$, then there exist $q$ primitive quasi-toric 
relations $\gs_1,\dots,\gs_q$ of $F$ such that 
$\MW(W_F)=\ZZ[\omega_6]\gs_1 \oplus \dots \oplus \ZZ[\omega_6]\gs_q$,
where the action of $\ZZ[\omega_6]$ on $\gs_i$ is described 
in~(\ref{eq-action}), (\ref{eq-sum}), and (\ref{eq-double}).
\end{cor}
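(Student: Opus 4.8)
The plan is to assemble the statement from the structural results already established in this section. By Proposition~\ref{prop-number-qt} and Corollary~\ref{cor-mw}, the Mordell--Weil group $\MW(W_F)$ is a free $\ZZ[\omega_6]$-module of rank $q$, where $\deg \Delta_\cC = 2q$; and by the explicit correspondence in section~\ref{sec-qt-rels} between sections of the projection~(\ref{eq-proj}) and quasi-toric relations $f^3+g^2=h^6F$, each element of $\MW(W_F)$ is represented by (the equivalence class of) a quasi-toric relation of $F$, with the $\ZZ[\omega_6]$-action and the addition given by~(\ref{eq-action}),~(\ref{eq-sum}), and~(\ref{eq-double}). So the only genuinely new content is that one may choose a $\ZZ[\omega_6]$-basis $\gs_1,\dots,\gs_q$ of $\MW(W_F)$ consisting of \emph{primitive} quasi-toric relations, i.e.\ of elliptic pencils $V_6\to E_0$ with irreducible generic fiber.

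First I would fix any $\ZZ[\omega_6]$-basis $\gs_1',\dots,\gs_q'$ of $\MW(W_F)$, which exists by Corollary~\ref{cor-mw}. Each $\gs_i'$ corresponds to a $\mu_6$-equivariant surjective map $f_i\colon V_6\to E_0$. If $f_i$ is already primitive we keep it; otherwise, by Proposition~\ref{prop-primitive}, there is a non-injective $\sigma_i\in\Hom(E_0,E_0)$ and a primitive elliptic pencil $\varphi_i\colon V_6\to E_0$ with $f_i=\sigma_i\circ\varphi_i$. The key point is that $\varphi_i$, viewed as an element of $\MW(W_F)$, again lies in the free $\ZZ[\omega_6]$-module and satisfies $\gs_i'=\sigma_i\cdot\varphi_i$, where $\sigma_i$ is a nonzero element of $\End(E_0)=\ZZ[\omega_6]$ (using $\End(E_0)=\ZZ[\omega_6]$ as in the proof of Theorem~\ref{irregularityMW}). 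Thus $\varphi_i$ is a \emph{divisor} of the original basis element in the module $\MW(W_F)$, and in particular the $\ZZ[\omega_6]$-submodule it generates contains $\gs_i'$. Iterating the replacement $\gs_i'\leadsto\varphi_i$ terminates, because each step strictly decreases the (norm of the) $\ZZ[\omega_6]$-index of $\langle\gs_i'\rangle$ inside its saturation, and there are only finitely many $\ZZ[\omega_6]$-submodules between $\langle\gs_i'\rangle$ and its saturation; the outcome is a primitive representative $\gs_i$ whose $\ZZ[\omega_6]$-span contains $\gs_i'$.

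It remains to check that the resulting $\gs_1,\dots,\gs_q$ still form a $\ZZ[\omega_6]$-basis. Since $\ZZ[\omega_6]$ is a PID (indeed a Euclidean domain), a finite set of elements of a free module of rank $q$ whose span contains a known basis, and which has cardinality $q$, is itself a basis: the span of the $\gs_i$ contains the span of the $\gs_i'$, which is all of $\MW(W_F)$, so the $\gs_i$ generate; and $q$ generators of a free rank-$q$ module over a PID are automatically linearly independent. Hence $\MW(W_F)=\ZZ[\omega_6]\gs_1\oplus\dots\oplus\ZZ[\omega_6]\gs_q$ with each $\gs_i$ primitive, and the action of $\ZZ[\omega_6]$ together with the addition is given by the stated formulas, as these were derived for arbitrary sections of~(\ref{eq-proj}). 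The main obstacle I anticipate is the termination/finiteness bookkeeping in the saturation argument — making precise that passing from a non-primitive pencil to the primitive $\varphi$ supplied by Proposition~\ref{prop-primitive} genuinely moves one closer to a primitive vector of the lattice $\MW(W_F)$ and cannot cycle — but this is a routine property of finitely generated modules over the PID $\ZZ[\omega_6]$ once the divisibility relation $\gs_i'=\sigma_i\cdot\varphi_i$ is in hand.
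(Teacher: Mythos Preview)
Your argument is correct and uses the same ingredients as the paper (Proposition~\ref{prop-number-qt}/Corollary~\ref{cor-mw} for the free $\ZZ[\omega_6]$-module structure, and Proposition~\ref{prop-primitive} for the factorization $\gs_i'=\sigma_i\cdot\varphi_i$), but you take a slightly more roundabout route. The paper does not perform any replacement or iteration at all: it observes that a basis element of a free module over a PID can never be a non-unit multiple of another module element, so if some $\gs_i$ in the chosen basis were non-primitive, the primitive $\tilde\gs_i$ supplied by Proposition~\ref{prop-primitive} (which lies in $\MW(W_F)$) would fail to be a $\ZZ[\omega_6]$-combination of the $\gs_j$'s, a contradiction. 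In other words, \emph{every} $\ZZ[\omega_6]$-basis of $\MW(W_F)$ already consists of primitive pencils.

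Two minor remarks on your write-up: first, the termination worry is moot, since the proof of Proposition~\ref{prop-primitive} already delivers $\varphi_i$ primitive (via Stein factorization), so a single replacement suffices; second, your final step---that $q$ generators of a free rank-$q$ module over the PID $\ZZ[\omega_6]$ are automatically a basis---is exactly what makes the paper's contradiction work, so the two arguments are really the same algebraic fact seen from opposite directions.
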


\begin{proof}
By Proposition~\ref{prop-number-qt} there exist $q$ quasi-toric relations $\gs_1,\dots,\gs_q$ of 
$F$ such that
$\MW(W_F)=\ZZ[\omega_6]\gs_1 \oplus \dots \oplus \ZZ[\omega_6]\gs_q$. The only question left to be proved is 
whether or not the quasi-toric relations $\gs_i$ can be chosen to be primitive. 
By Proposition~\ref{prop-primitive}, if $\gs_i$ was not primitive, then $\gs_i=\sigma \tilde \gs_i$ for a certain
$\sigma\in \ZZ[\omega_6]=\Hom(E_0,E_0)$ and $\tilde \gs_i$ primitive. Since $\sigma$ cannot be a unit,
then $\tilde \gs_i\notin \ZZ[\omega_6] \gs_1 \oplus \dots \oplus \ZZ[\omega_6]\gs_q$, which contradicts 
our assumption.
\end{proof}

\begin{theorem}
\label{cor-kulikov}
For any irreducible plane curve $\cC=\{F=0\}$ whose only singularities are nodes and cusps the
following statements are equivalent:
\begin{enumerate}
 \item\label{cor-kulikov-1} 
$\cC$ admits a quasi-toric relation of elliptic type $(2,3,6)$,
 \item\label{cor-kulikov-2} 
$\cC$ admits an infinite number of quasi-toric relations of elliptic type $(2,3,6)$, 
 \item\label{cor-kulikov-3} 
$\Delta_\cC(t)$ is not trivial ($\Delta_\cC(t)\neq 1$).
\end{enumerate}

Moreover, the set of quasi-toric relations of $\cC$
$\{(f,g,h)\in \CC[x,y,z]^3\mid f^2+g^3+h^6F=0\}$ has a group structure and it is 
isomorphic to $\ZZ^{2q}$, where $\Delta_\cC(t)=(t^2-t+1)^q$. Also, $\cC$ admits 
an infinite number of primitive quasi-toric relations, unless $q=1$,
in which case $\cC$ only has one primitive quasi-toric relation.
\end{theorem}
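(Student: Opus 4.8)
The plan is to organize the proof around the equivalences and then the structural statement. For the equivalences, the engine is the chain of correspondences built up in the preceding sections: by Theorem~\ref{theoremonWF} the $\ZZ$-rank of $\MW(W_F)$ equals $\deg\Delta_\cC(t)$, and by Proposition~\ref{prop-number-qt} together with the explicit section/quasi-toric-relation dictionary of~(\ref{eq-proj}), nonzero elements of $\MW(W_F)$ correspond to (equivalence classes of) quasi-toric relations of type $(2,3,6)$ for $\cC$. Since $\MW(W_F)$ is finitely generated and torsion-free as a $\ZZ[\omega_6]$-module (Corollary~\ref{cor-mw}), it is either trivial or infinite. Thus: (\ref{cor-kulikov-3}) $\Leftrightarrow$ $\deg\Delta_\cC>0$ $\Leftrightarrow$ $\rk\MW(W_F)>0$ $\Leftrightarrow$ $\MW(W_F)\neq 0$ $\Leftrightarrow$ there exists a nonzero element, i.e.\ a quasi-toric relation of type $(2,3,6)$, which gives (\ref{cor-kulikov-1}); and if one such relation exists then $\MW(W_F)$ is infinite, yielding infinitely many such relations, i.e.\ (\ref{cor-kulikov-2}). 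The implications (\ref{cor-kulikov-2}) $\Rightarrow$ (\ref{cor-kulikov-1}) $\Rightarrow$ (\ref{cor-kulikov-3}) are then immediate (the last because the presence of any nonzero quasi-toric relation forces $\MW(W_F)\neq 0$, hence $\Delta_\cC\neq 1$).

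For the moreover part, first I would identify the set $\{(f,g,h)\mid f^2+g^3+h^6F=0\}$ with $\MW(W_F)$ itself, not merely with equivalence classes: the subtlety is that a quasi-toric relation and its scaling by a constant give the same section of~(\ref{eq-proj}), and one must check that the group operation on sections (given explicitly by~(\ref{eq-sum}), (\ref{eq-double}), and the $\ZZ[\omega_6]$-action~(\ref{eq-action})) transports to an honest group law on triples $(f,g,h)$. This is where I expect to pick a normalization — essentially choosing representatives so that the addition formulas produce polynomials rather than rational functions after clearing denominators — and then the group is $\MW(W_F)=\ZZ[\omega_6]^q\cong\ZZ^{2q}$ by Corollary~\ref{cor-mw} and Theorem~\ref{irregularityMW}, where $q$ is read off from $\Delta_\cC(t)=(t^2-t+1)^q$ via the divisibility result (Proposition~\ref{prop-divisibility}) since the only local Alexander polynomials of nodes and cusps contribute roots that are primitive $6$th roots of unity.

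Finally, the statement about primitive relations follows from Corollary~\ref{cor-gen-primitive} together with Proposition~\ref{prop-primitive}. If $q\geq 2$, Corollary~\ref{cor-gen-primitive} produces $q$ primitive generators $\gs_1,\dots,\gs_q$, and then any element of the form $\gs_1+n\gs_2$ with $\sigma\in\ZZ[\omega_6]$ nonzero provides infinitely many; more carefully, one argues that infinitely many elements of $\MW(W_F)$ are primitive because being non-primitive means being divisible by a non-unit of $\ZZ[\omega_6]$ (Proposition~\ref{prop-primitive}), and the non-primitive elements lie in the union over non-unit $\sigma$ of $\sigma\,\MW(W_F)$, which cannot exhaust an infinite free $\ZZ[\omega_6]$-module of rank $\geq 1$ (e.g.\ a generator of a rank-one summand is primitive, and so are its images under all units, but also one exhibits a genuinely infinite family). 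When $q=1$, $\MW(W_F)=\ZZ[\omega_6]\gs_1$, and every nonzero element is $\sigma\gs_1$; such an element is primitive precisely when $\sigma$ is a unit, and the six units of $\ZZ[\omega_6]$ all give the \emph{same} quasi-toric relation up to the equivalence~(\ref{eq-qt-equiv}) and constant scaling (the $\omega_6$-action in~(\ref{eq-action}) only alters signs and roots of unity absorbed into $h$), so there is exactly one primitive quasi-toric relation. The main obstacle I anticipate is the bookkeeping in the second paragraph: verifying that the birational group law on $W_F$ descends to a well-defined, denominator-free group structure on the literal set of triples $(f,g,h)$, and pinning down the equivalence relation precisely enough to get the clean count ``one primitive relation when $q=1$'' rather than ``one up to units.''
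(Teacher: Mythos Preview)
Your proposal is correct and follows essentially the same route as the paper: the equivalences via Proposition~\ref{prop-number-qt} and the $\ZZ[\omega_6]$-module structure of $\MW(W_F)$, and the ``moreover'' part via Proposition~\ref{prop-primitive} and Corollary~\ref{cor-gen-primitive}. The paper's own proof is extremely terse (three sentences citing these same results), so your version is in fact a more fleshed-out rendition of the same argument; the bookkeeping concerns you flag about identifying literal triples with $\MW(W_F)$ and about counting primitive relations ``up to units'' are real subtleties that the paper leaves implicit as well.
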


\begin{proof}
For the first part, \eqref{cor-kulikov-1} $\Leftrightarrow$ \eqref{cor-kulikov-2} 
is an immediate consequence of the group structure of the set of quasi-toric relations,
namely, once a quasi-toric relation $\gs$ is given, the set $\ZZ[\omega_6] \gs$ 
provides an infinite number of such relations.
Also~\eqref{cor-kulikov-1} $\Leftrightarrow$ \eqref{cor-kulikov-3} is a consequence
of Proposition~\ref{prop-number-qt}.
 
The \emph{moreover} part is a consequence of Proposition~\ref{prop-primitive}.
\end{proof}

Theorem~\ref{cor-kulikov} is related to~\cite{kulikov-albanese}. Also this result has been recently
noticed by Kawashima-Yoshizaki in~\cite[Proposition~3]{kaw-torus-type}. In this paper, a quasi-toric relation
$s$ is considered and then the series $\gs_n:=(\omega_6+1)^n s$ is used (it is an easy exercise).
The resulting quasi-toric relations $\gs_n$ are not primitive and the generic member of the pencil associated
with $\gs_n$ is the product of $3^n$ members of the original pencil associated with $s$,
which incidentally is the degree of $E_0\to E_0$, $x\mapsto (\omega_6+1)^nx$.

\section{Alexander polynomials of $\delta$-curves}
\label{sec-delta-curves}

In this section we extend the results of previous sections to 
singularities more general than nodes and cusps. 

Let us consider the general situation described in section~\ref{defalpol}, that is,
let us fix $F\in \CC[x,y,z]$, a homogeneous polynomial of degree $d$ which is not a power, whose set of zeroes 
in $\PP^2$ is the curve $\cC$. By~\eqref{definingepsilon} this is equivalent to fixing a projective plane curve
$\cC=\cC_1\cup ... \cup \cC_r$ and a list of multiplicities $(\e_1,...,\e_r)$ (such that $\gcd(e_i)=1$) or a 
surjection $\e_F:\pi_1(\PP^2\setminus \cC_0\cup \cC)\to \ZZ$ (where $\cC_0$ is a line at infinity transversal to 
$\cC$) such that $\e_F(\gamma_i)=\e_i$, $\gamma_i$ a meridian around $\cC_i$. Recall that $\e_i$ corresponds to 
the multiplicity of each irreducible component of $F$.

\subsection{Definition of $\delta$-curves and classification of 
$\delta$-essential singularities with respect to $\e$ with 
$\delta=3,4,6$} 
 
\begin{dfn}\label{deltapartial} 
Let $(\cC,P)$ be a germ of a singular point of $\cC$. We call 
$(\cC,P)$ a \emph{$\delta$-essential singularity}
(resp. \emph{$\delta$-co-prime singularity}) w.r.t. $\e$ 
if and only if the roots of $\Delta_{\cC,\e,P}(t)$ are all 
$\delta$-roots of unity (resp. no root of $\Delta_{\cC,\e,P}(t)$ 
is a $\delta$-root of unity except for $t=1$). 
 
We say that a curve $\cC \subset \CC^2$ \emph{has only $\delta$-essential singularities} 
if there exists an epimorphism
$\e: H_1(\CC^2\setminus \cC)\to \ZZ$ 
such that $(\cC,P)$ is a $\delta$-essential singularity w.r.t. $\e_P$ for all 
$P\in \Sing(\cC)$ (see~\eqref{eq-defep} to recall the construction of $\e_P$). 
 
A curve $\cC$ is called \emph{$\delta$-partial} w.r.t.
a homomorphism $\e$ if
any singularity $P$ of $\cC$, is either $\delta$-essential or $\delta$-co-prime. 
 
We also call a curve \emph{$\delta$-total} w.r.t.
to a homomorphism $\e$ 
 if it is $\delta$-partial and all the roots of the global Alexander 
polynomial $\Delta_{\cC,\e}(t)$ w.r.t. $\e$ are roots of unity
of degree $\delta$ (not necessarily primitive).
\end{dfn}

\begin{remark}
\label{rem-6ess}
The curves whose only singularities are (reduced) nodes and cusps 
necessarily have $6$-essential singularities. 
As another example one can consider
$\cC=\cC_0\cup \cC_1$, where $\cC_1$ is the 
tricuspidal quartic and $\cC_0$ is its bitangent and the
epimorphism $\e$ mapping the meridian of $\cC_0$ to 
$2$ and the meridian of $\cC_1$ to $1$ (i.e. the homomorphism of type
$(2,1)$. The local 
Alexander polynomial of $\cC$ at a tacnode w.r.t. $\e$
is given by $(t^3+1)$, whereas at a cusp it is simply $t^2-t+1$.
Therefore all singularities of $\cC$ w.r.t. 
homomorphism of type $(2,1)$. are 6-essential.
In particular, by Proposition~\ref{prop-divisibility}, $\cC$ is a
$6$-total curve.

Also note that tacnodes with respect to the homomorphism of type
$(1,1)$ are 4-essential singularities as well as nodes with respect 
to the homomorphism of type~$(1,2)$.

As for 3-essential singularities, 
one has $\AAA_5$-singularities w.r.t. $\e$ of type 
$(1,1)$ and nodes w.r.t. $\e$ of type~$(1,3)$. 
\footnote{here and below we use standard ADE-notations for germs
of simple plane curve singularities. In particular, the germs of
$\AAA_n$-singularities (resp. $\DD_n$-singularities) 
are locally equivalent to $x^{n+1}+y^2$ (resp. $x^2y+y^{n-1}$).}. 
\end{remark}

\begin{prop}
\label{prop-essential-local-global}
A curve with only $\delta$-essential singularities is $\delta'$-total
for some~$\delta'|\delta$.
\end{prop}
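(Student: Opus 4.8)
The plan is to deduce this from Proposition~\ref{prop-divisibility}, which already tells us exactly what the global Alexander polynomial of a curve looks like once we control the local Alexander polynomials at all singular points. So the first step is to unpack the hypothesis: by definition of a curve with only $\delta$-essential singularities, there is an epimorphism $\e:H_1(\CC^2\setminus\cC)\to\ZZ$ such that for every $P\in\Sing(\cC)$, all roots of the local polynomial $\Delta_{\cC,\e,P}(t)$ are $\delta$-th roots of unity. In particular, every such root is a $\lambda$-th root of unity for some $\lambda\mid\delta$, so the product $\prod_P\Delta_{\cC,\e,P}(t)$ has all its roots among the $\delta$-th roots of unity, and we may write it (up to the standard normalization) as $\prod_{\lambda\mid\delta}\varphi_\lambda(t)^{a_\lambda}$ for suitable $a_\lambda\ge 0$.

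Next I would feed this into the divisibility relation~\eqref{firstdiv}: $\Delta_{\cC,\e}(t)$ divides $\prod_P\Delta_{\cC,\e,P}(t)\cdot\prod_i(t^{\e_i}-1)^{k_i}$. The factors $(t^{\e_i}-1)$ contribute only roots of unity (of various degrees dividing $\e_i$), so every root of $\Delta_{\cC,\e}(t)$ is a root of unity; write $\Delta_{\cC,\e}(t)=\prod_{m}\varphi_m(t)^{s_m}$ where $m$ ranges over the degrees of cyclotomic factors actually appearing with $s_m>0$. Now I invoke the ``moreover'' clause of Proposition~\ref{prop-divisibility}: once the local Alexander polynomials have only $\delta$-th roots of unity as roots, that proposition concludes $\Delta_{\cC,\e}(t)=\prod_{\lambda\mid\delta}\varphi_\lambda(t)^{s_\lambda}\prod_i(t^{\e_i}-1)^{k_i}$ and that $s_\lambda>0$ forces $\lambda\mid d$. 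The point I still must extract is: which single $\delta'\mid\delta$ governs all the surviving factors. Set $\delta':=\lcm\{m : s_m>0\}$; then $\delta'\mid\delta$ since each such $m$ divides $\delta$ (the factors $(t^{\e_i}-1)$ only contribute roots of unity whose orders divide $\e_i\mid d$, but more to the point their orders are already among divisors of $\delta$ once we've absorbed them as in Proposition~\ref{prop-divisibility}). By construction every root of $\Delta_{\cC,\e}(t)$ is then a $\delta'$-th root of unity, so by Definition~\ref{deltapartial} the curve is $\delta'$-total with respect to~$\e$, provided we also check $\cC$ is $\delta'$-partial — but a $\delta$-essential singularity is a fortiori $\delta'$-essential or $\delta'$-co-prime for any $\delta'$, since its local roots are $\delta$-th roots of unity; each such root either is or is not a $\delta'$-th root of unity, and (using the prime-power structure of cyclotomic polynomials) one checks the dichotomy holds singularity by singularity.

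The step I expect to be the main obstacle is the last one: verifying cleanly that each $\delta$-essential singularity is automatically $\delta'$-partial (i.e.\ each is $\delta'$-essential or $\delta'$-co-prime) rather than ``mixed,'' and pinning down $\delta'$ so that it simultaneously works for every singular point and for the global polynomial. The subtlety is that $\Delta_{\cC,\e,P}(t)$ could have roots of several orders all dividing $\delta$, and one needs the fact that for a fixed singularity germ the set of orders appearing is itself constrained — here one uses the explicit classification from the tables~\ref{tab:d-ess}, \ref{tab:d-4-ess}, \ref{tab:d-3-ess} (or, more robustly, the observation that being $\delta$-essential for $\delta=6$ forces $\delta'\in\{3,6\}$, and for $\delta=4$ forces $\delta'\in\{2,4\}$, etc., so one can simply track which cyclotomic factors survive after cancellation against the $(t^{\e_i}-1)$ contributions). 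Once that bookkeeping is done, the conclusion that $\cC$ is $\delta'$-total for this $\delta'\mid\delta$ is immediate from the definitions.
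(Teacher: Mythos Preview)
Your approach --- invoke Proposition~\ref{prop-divisibility} --- is exactly what the paper does, and the paper's proof is a single line. The difficulty you manufacture comes entirely from your choice of $\delta'$: you set $\delta':=\lcm\{m:s_m>0\}$ and then must verify that every singularity is $\delta'$-essential or $\delta'$-co-prime. But the simplest choice is $\delta'=\delta$ itself. With this choice the $\delta$-partial condition is automatic: every singularity is $\delta$-essential by hypothesis, which is already one of the two options in Definition~\ref{deltapartial}. And Proposition~\ref{prop-divisibility} (its ``in particular'' clause) gives that the global $\Delta_{\cC,\e}$ has only $\delta$-th roots of unity as roots. Hence $\cC$ is $\delta$-total, and $\delta\mid\delta$.

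Your attempted shortcut --- ``a $\delta$-essential singularity is a fortiori $\delta'$-essential or $\delta'$-co-prime for any $\delta'$'' --- is false as stated. Take the $\AAA_5$ singularity with $\e=(1,1)$ from Table~\ref{tab:d-ess}: its local Alexander polynomial has both a primitive third root and a primitive sixth root of unity, so it is $6$-essential but for $\delta'=3$ it is neither $3$-essential (there is a root of order~$6$) nor $3$-co-prime (there is a root of order~$3$). Your fallback of checking ``singularity by singularity'' via the tables is not a proof of the general statement, and in any case is entirely unnecessary once you take $\delta'=\delta$. The ``main obstacle'' you identify is not an obstacle at all; it simply does not arise.
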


\begin{proof}
The result is an immediate consequence of Proposition~\ref{prop-divisibility}.
\end{proof}

Note that the converse of Proposition~\ref{prop-essential-local-global} is 
not true (see section~\ref{sec-examples}).

Also, Proposition~\ref{prop-essential-local-global} can be sharpened 
using~\cite{degtyarev-divisibility}, so that not
all singularities are required to be $\delta$-essential.
 
\begin{remark}
\label{rem-total-partial}
Note that there are examples of $\delta$-partial curves that are not $\delta$-total,
for instance a union of two cuspidal cubics intersecting each other at three smooth
points with multiplicity of intersection 3, that is, a sextic $\cC$ with singularities
$2\AAA_2+3\AAA_5$
\footnote{i.e. the set of singularities consists of 2 points of type
$\AAA_2$ and three points of type $\AAA_5$. Similar notations will be used in the rest
of the paper.}
has Alexander polynomial $\Delta_\cC(t)=(t^2-t+1)^2(t^2+t+1)$
(cf.~\cite{oka-alexander}). Therefore $\cC$ is 3-partial but not 3-total. This curve
will be studied in more detail in Example~\ref{ex-2a2-3a5}.
\end{remark}

\begin{prop} 
\label{prop-delta-alexander}
Let $(\cC,P)$ be a germ of curve singularity and let 
$\pi: \tilde \CC^2 \rightarrow \CC^2$ be a birational 
morphism such that the support of the total pull-back
$\pi^*(\cC)$ is a normal crossing divisor on 
$\tilde \CC^2$. Let $m_{i,j}$ be the multiplicity of $\pi^*{f_j}$ 
where $f_j$ is the local equation of the $j$-th branch of $\cC$ at 
$P$ along the exceptional curve $E_i$ of $\pi$. Let $\e_j$ 
be the value of $\e$ on the meridian corresponding to $f_j$.
Then the local Alexander polynomial of $\cC$ at $P$ w.r.t. $\e$
is given by:
\begin{equation}
\label{eq-alexander-delta}
\Delta_{\cC,\e,P}=
(t-1)\Prod_i(1-t^{\sum_j m_{i,j}\e_j})^{-\chi(E_i^0)}
\end{equation}
where $E_i^0=E_i\setminus \bigcup_{k\mid k\ne i} E_k$ and $\chi$ is 
the topological Euler characteristic.
\end{prop}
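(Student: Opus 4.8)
The plan is to derive the formula \eqref{eq-alexander-delta} from the $A'Campo$-type description of the monodromy zeta function of a plane curve singularity, read off a resolution, together with the identification of the local Alexander polynomial of $(\cC,P)$ w.r.t. $\e$ as (essentially) the characteristic polynomial of the monodromy on $H_1$ of the appropriate cyclic cover of the Milnor fiber. Concretely, let $\pi:\tilde\CC^2\to\CC^2$ be the embedded resolution in the statement, with exceptional components $E_i$, and let $m_{i,j}$ be the multiplicity of $\pi^*f_j$ along $E_i$ (where $f=\prod_j f_j^{a_j}$ is the factorization of the local equation into branches, but here I record the pull-back of each reduced branch $f_j$ separately and weight by $\e_j$). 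The function $g:=F_\e$ locally looks like $\prod_j f_j^{\e_j}$, so $\pi^*g$ has multiplicity $N_i:=\sum_j m_{i,j}\e_j$ along $E_i$. First I would invoke the fact that the Milnor fiber of the germ $g$, and hence the infinite cyclic cover associated with $\e_P$, has the homotopy type of a finite $1$-complex, so its only interesting homology is $H_1$, and $\Delta_{\cC,\e,P}(t)$ is — up to the normalization $\Delta(0)=1$ and a factor $(t-1)$ coming from $H_0$ — the characteristic polynomial of the monodromy acting there.

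Next I would apply A'Campo's formula for the zeta function of the monodromy of $g$ in terms of the resolution $\pi$:
\begin{equation}
\zeta_g(t)=\prod_i (1-t^{N_i})^{-\chi(E_i^0)},
\end{equation}
where $E_i^0=E_i\setminus\bigcup_{k\ne i}E_k$ is the open part of $E_i$ and the product is over exceptional divisors only (components of the strict transform of $\cC$ contribute $\chi=0$ to the relevant open strata once one also removes the strict transform, or more precisely their open part is an affine line minus points and is handled by the same bookkeeping as in A'Campo's original statement; I would check that the normalization chosen here, with the leading $(t-1)$ and the restriction of the product to the $E_i$, matches exactly). Since $\zeta_g(t)=\prod_q \det(I-tT\mid H_q)^{(-1)^{q+1}}$ and only $q=0,1$ contribute, with $H_0$ giving $(1-t)$, one gets
\begin{equation}
\det(I-tT\mid H_1(F_g))=(1-t)\prod_i(1-t^{N_i})^{-\chi(E_i^0)}.
\end{equation}
Rewriting $\det(I-tT\mid H_1)$ as the reciprocal polynomial and renormalizing to have value $1$ at $t=0$ produces precisely the right-hand side of \eqref{eq-alexander-delta}, after absorbing signs; the factor $(t-1)$ in the statement is exactly this $H_0$-contribution (kept, in the convention of this paper, as part of $\Delta_{\cC,\e,P}$, cf. the $(t^{\e_i}-1)$ factors appearing in Proposition~\ref{prop-divisibility}).

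The one genuine subtlety — and what I expect to be the main obstacle — is the translation between the \emph{monodromy} of the function $g=F_\e$ and the \emph{local Alexander polynomial} $\Delta_{\cC,\e,P}$ as defined in section~\ref{defalpol} via the homomorphism $\e_P$ of the local fundamental group to $\ZZ$. When the branches carry distinct multiplicities $\e_j$ (the non-reduced / general-$\e$ case), $\e_P$ is not the total linking number, and one must be careful that the infinite cyclic cover of $\SSS_P\setminus\cC$ determined by $\e_P$ coincides with the Milnor fibration of $g$; this is the statement that $\e_P(\gamma_j)=\e_j$ and that the Milnor fiber of $g$ is the fiber of the map $\SSS_P\setminus\cC\to S^1$ given by $g/|g|$. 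Granting this (it is essentially the observation made in section~\ref{defalpol} that non-reduced equations correspond exactly to such $\e$), the rest is the standard A'Campo computation. I would also double-check the edge cases where some $E_i^0$ has positive Euler characteristic — i.e. $E_i$ meets no other component, which cannot happen for a nontrivial exceptional configuration obtained by blow-ups — so that all exponents $-\chi(E_i^0)$ are $\ge 0$ and the right-hand side is genuinely a polynomial, consistent with $\Delta_{\cC,\e,P}$ being one.
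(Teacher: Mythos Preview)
Your proposal is correct and follows essentially the same route as the paper: identify the infinite cyclic cover of the local complement determined by $\e_P$ with the Milnor fiber of the non-reduced germ $g=\prod_j f_j^{\e_j}$, and then apply A'Campo's formula to read off the zeta function (hence $\Delta_{\cC,\e,P}$ up to the $(t-1)$ factor from $H_0$) from the resolution data $N_i=\sum_j m_{i,j}\e_j$ and $\chi(E_i^0)$. The paper's own proof is a two-line version of exactly this argument, so your additional bookkeeping (zeta-function conventions, the $(t-1)$ factor, polynomiality) is elaboration rather than a different method.
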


\begin{proof} Indeed, the infinite cyclic cover of the complement 
to the zero set of the germ of $\cC$ at $P$ corresponding to the 
homomorphism $\e$ can be identified with the 
Milnor fiber of non-reduced singularity:
\begin{equation}
f_1^{\e_1} \cdot ....\cdot f_r^{\e_r}.
\end{equation}
Now the claim follows from A'Campo's Formula (cf.~\cite{acampo}).
\end{proof}

\begin{remark} 
Proposition~\ref{prop-delta-alexander} allows one to compile a complete list 
of $\delta$-essential singularities with $\delta \le k$.
\end{remark}

In Table~\ref{tab:d-ess} a list of the possible 6-essential singularities
is given. The first column shows the number of local branches of the 
singularity. The second column contains the reduced type of the singularity.
The third column shows a list of all possible multiplicities 
for the branches. It is a consequence of   
the divisibility conditions on the multiplicities of each branch
imposed by 
Proposition~\ref{prop-delta-alexander} which follows from the requirement 
to have a 6-essential singularity: see the proof below. 

The forth column gives the list of multiplicities $(s_1,s_2,s_3,s_6)$ of the 
irreducible factors of the Alexander polynomial in~$\QQ[t]$
$$
\Delta_{P,\e}(t)=(t-1)^{s_1}(t+1)^{s_2}(t^2+t+1)^{s_3}(t^2-t+1)^{s_6}.
$$

Finally the fifth column shows that the set of logarithms 
belonging to the interval $(0,1)$ of the eigenvalues
of (the semi-simple part of) the monodromy acting on $\Gr^0_F\Gr^W_1$
of the cohomology of the Milnor fiber. 

\begin{center}
\begin{table}[h]
\caption{6-essential singularities} 
\centering 
{\small{
\begin{tabular}{|c|c|c|c|c|} 
\hline 
$r$ & $\begin{matrix}\text{reduced} \\ \text{singularity type}\end{matrix}$
& possible types of $\e$ & $\begin{matrix}\text{Alexander} \\ \text{polynomials}\\ (s_1,s_2,s_3,s_6)\end{matrix}$
& $\begin{matrix}\text{weight 1 part}\\ \text{of spectrum} \\ \text{in } (0,1)\end{matrix}$
\\ 
\hline \hline
$1$ & $\AAA_2\equiv y^2-x^3$ 
& $(1)$ & $(0,0,0,1)$ 
& \raisebox{-.25ex}{$\frac{5}{6}$}
\\ [0.5ex]
\hline

\multirow{3}{*}{$2$} 
& $\AAA_1 \equiv y^2-x^2$ 
& $(1,1)$ & $(1,0,0,0)$
& \raisebox{-.25ex}{$\emptyset$}
\\ [0.5ex]
\cline{2-5}
& $\AAA_3\equiv y^2-x^4$ 
& $(2,1)$ & $(2,0,0,1)$
& \raisebox{-.25ex}{$\frac{5}{6}$}
\\ [.5ex]
\cline{2-5}
& $\AAA_5\equiv y^2-x^6$ 
& $(1,1)$ & $(1,0,1,1)$
& \raisebox{-.25ex}{$\frac{2}{3},\frac{5}{6}$}
\\ [.5ex]
\hline

\multirow{3}{*}{$3$} 
& $\DD_4\equiv y^3-x^3$ 
& 
${\left\{\begin{matrix}(4,1,1),(3,2,1),(2,2,2)\\(1,1,1)\end{matrix}\right.}$ &  
$\left\{\begin{matrix}(2,1,1,1)\\(2,0,1,0)\end{matrix}\right.$
& \raisebox{-.25ex}
{$\left\{\begin{matrix} \frac{2}{3},\frac{5}{6}\\ \frac{2}{3} \end{matrix}\right.$}
\\ [.5ex]
\cline{2-5}
& $\DD_6\equiv (x^2-y^4)y$ 
& $(1,1,2)$ & $(2,1,1,1)$
& \raisebox{-.25ex}{$\frac{2}{3},\frac{5}{6}$}
\\ [.5ex]
\cline{2-5}
& $(x^3-y^6)$ 
& $(1,1,1)$ & $(2,2,1,2)$
& \raisebox{-.25ex}{$\frac{2}{3},\frac{5}{6}$}
\\ [.5ex]
\hline

\multirow{2}{*}{$4$} 
& $(x^4-y^4)$ 
& $(3,1,1,1), (2,2,1,1)$ & $(3,2,2,2)$
& \raisebox{-.25ex}{$\frac{1}{3},\frac{2}{3},\frac{5}{6}$}
\\ [.5ex] 
\cline{2-5} 
& $(x^2-y^4)(y^2-x^4)$ 
& $(1,1,1,1)$ & $(3,2,2,2)$
& \raisebox{-.25ex}{$\frac{1}{3},\frac{2}{3},\frac{5}{6}$}
\\ [.5ex]
\hline

$5$ & $x^5-y^5$ 
& $(2,1,1,1,1)$ & $(4,3,3,3)$
& \raisebox{-.25ex}{$\frac{1}{3},\frac{2}{3},\frac{5}{6}$}
\\ [.5ex]
\hline

$6$ & $x^6-y^6$ 
& $(1,1,1,1,1,1)$ & $(5,4,4,4)$
& \raisebox{-.25ex}{$\frac{1}{3},\frac{2}{3},\frac{5}{6}$}
\\ [.5ex]
\hline
\end{tabular}
}}
\label{tab:d-ess}
\end{table}
\end{center}

\begin{prop}\label{proofoftable1}
Table~\ref{tab:d-ess} contains a complete list of $6$-essential singularities.
\end{prop}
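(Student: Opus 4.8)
The plan is to verify that Table~\ref{tab:d-ess} exhausts the $6$-essential singularities by combining the numerical constraints imposed by Proposition~\ref{prop-delta-alexander} with a case analysis on the number $r$ of local branches. First I would recall that $(\cC,P)$ is $6$-essential w.r.t.\ $\e$ exactly when every root of $\Delta_{\cC,\e,P}(t)$ is a $6$-th root of unity, so by the A'Campo-type formula~\eqref{eq-alexander-delta} every factor $(1-t^{\sum_j m_{i,j}\e_j})$ with $\chi(E_i^0)\neq 0$ must be a product of cyclotomic polynomials $\varphi_\lambda$ with $\lambda \mid 6$; equivalently, for each exceptional divisor $E_i$ carrying a nontrivial Euler-characteristic contribution, the weighted multiplicity $N_i:=\sum_j m_{i,j}\e_j$ must divide $6$ (or be such that $t^{N_i}-1$ has only roots of unity of order dividing $6$, i.e.\ $N_i\in\{1,2,3,6\}$). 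This is a strong finiteness constraint: in a minimal embedded resolution, only divisors with $\chi(E_i^0)<0$ (the ``rupture'' divisors, meeting the rest of the total transform in $\ge 3$ points) and the ones meeting the strict transform contribute, and their weighted multiplicities grow with the complexity of the singularity.

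The second step is the branch-by-branch enumeration. For $r=1$ the only possibility with a primitive $6$-th root of unity appearing is the ordinary cusp $\AAA_2$; any more complicated unibranch singularity ($\AAA_4$, $\EE_6$, $\EE_8$, higher cusps, etc.) produces a rupture divisor with weighted multiplicity not dividing $6$, or produces roots of $\Delta$ of order strictly larger than $6$ — one checks this directly from the resolution data, using that the multiplicities $m_i$ along the exceptional divisors of the cusp $y^2-x^3$ are $2,3,6$ while for $y^2-x^5$ or $y^3-x^4$ one already gets $10$ or $12$. For $r\ge 2$ I would parametrize by the reduced type, which (for the Alexander polynomial to involve only $\varphi_\lambda$, $\lambda\mid 6$) is forced to be one of the homogeneous singularities $x^a-y^a$ with $a\le 6$, the $\DD_n$-type singularities with small $n$, or the tangential types $(x^2-y^4)y$, $x^3-y^6$, $(x^2-y^4)(y^2-x^4)$ listed in the table; for each I would write down the dual graph of the embedded resolution, read off the $m_{i,j}$, and solve the divisibility system $N_i \mid 6$ for the admissible multiplicity vectors $(\e_1,\dots,\e_r)$ — this yields exactly column three. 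Plugging those $\e$-vectors back into~\eqref{eq-alexander-delta} and collecting cyclotomic factors gives column four, and the weight-one spectrum in column five follows from the Corollary after Lemma~\ref{comparemhs} (the logarithms in $(0,1)$ of the eigenvalues of the monodromy on $\Gr^0_F\Gr^W_1$), which for these quasi-homogeneous germs is computed from the standard spectrum formula.

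The main obstacle I expect is not any single computation but organizing the case analysis so that it is genuinely exhaustive: one must argue that \emph{no} singularity outside the list can be $6$-essential, which requires a clean a priori bound forcing the reduced type into the finite list above before one starts computing resolutions. The cleanest route is: (i) show that if some rupture or strict-transform-meeting divisor $E_i$ has $N_i\notin\{1,2,3,6\}$ then $\Delta_{\cC,\e,P}$ has a root of order $\nmid 6$, hence $P$ is not $6$-essential; (ii) observe that $N_i$ is bounded below by the multiplicity of the first exceptional divisor, which equals $\sum_j \e_j\cdot(\text{mult of }j\text{-th branch})$, and on later blow-ups the weighted multiplicities only increase along chains leading to higher tangency — so a singularity whose branches are too tangent, or too numerous with too-high multiplicities, is immediately excluded; (iii) this reduces the search to branches that are smooth or nodal-to-each-other with controlled tangency, which is precisely the finite list. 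Filling in (ii) rigorously — a monotonicity statement for weighted multiplicities under blow-up in terms of the Enriques diagram — is the technical heart, and the rest is the bookkeeping reflected in the table.
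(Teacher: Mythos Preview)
Your approach is essentially the one the paper takes: use the A'Campo formula of Proposition~\ref{prop-delta-alexander}, extract the divisibility constraint $N_i=\sum_j m_{i,j}\e_j\mid 6$ on the contributing exceptional divisors, and then run a case analysis on the number $r$ of branches. Two organizational points are worth noting. First, the paper streamlines the exhaustive search you flag as the ``main obstacle'' by observing that for $r>1$ the divisibility forces every branch to be \emph{smooth}, and then uses a branch-removal induction: deleting a branch from an $(r{+}1)$-branch $6$-essential germ yields an $r$-branch $6$-essential germ, and if the only admissible $\e$ at level $r$ is $(1,\dots,1)$ one cannot extend further; this replaces your Enriques-diagram monotonicity argument with a shorter bootstrap. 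Second, for the last column the paper does \emph{not} appeal to a quasi-homogeneous spectrum formula (note that the entry $(x^2-y^4)(y^2-x^4)$ is not quasi-homogeneous), but computes the constants of quasi-adjunction directly from the resolution data via the inequalities $N_k\kappa_\phi\ge N_k-e_k(\phi)-c_k-1$ and then passes to the spectrum by $\kappa\mapsto 1-\kappa$; you should adjust your plan for that entry accordingly.
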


\begin{proof}
We will use notation from Proposition~\ref{prop-delta-alexander}.
The contributing exceptional divisors (i.e. those with $\chi(E^{\circ})\ne 0$) 
appearing in the end of resolution of singularity $(\cC,P)$ have
maximal multiplicity $\sum m_{i,j}\e_j$ among components preceding it. 
Hence the primitive root corresponding to the
component with maximal such multiplicity won't cancel
in \eqref{eq-alexander-delta}. 
Moreover, the multiplicities of the 
contributing exceptional divisors 
have to divide~$6$. 

In the case $r=1$ this forces the singularity 
to be a cusp. Also, for $r>1$ this forces the irreducible branches to be smooth.
The rest of the list can be worked out just keeping in mind that since $\sum_i m_{i,j}\leq 6$,
in particular removing any branch from a valid singularity with $r+1$ branches, one 
should obtain a valid singularity with $r$ branches. Finally, if $\Sigma_{r}$ is
a $\delta$-essential singularity type of $r$ branches whose only valid homomorphism
is $(1,\dots,1)$, then there is no $\delta$-essential singularity $\Sigma_{r+1}$
of $(r+1)$ branches such that $\Sigma_r$ results from removing a branch from 
$\Sigma_{r+1}$.

For example, for reduced 
singularity $\AAA_3$ the 
Proposition~\ref{prop-delta-alexander} yields that a collection  
of multiplicities of branches $(\e_1,\e_2)$ 
yields a 6-essential non-reduced singularity 
if and only if $2(\e_1+e_2) \vert 6$. This is satisfied
only by the pair $(\e_1,\e_2)=(2,1)$.
Similarly, for singularity $\DD_4$ the divisibility condition 
is $\e_1+e_2+e_3 \vert 6$ which is satisfied by three triplets 
indicated in the table, etc.

Note that the possible morphisms $\e$ shown in the third column are all up to action
of the permutation group except for the case $(x^2-y^4)y$, where the last branch
is not interchangeable with the others. So, whereas $(3,2,1)$ in the $\DD_4$ case
represents six possible morphisms, $(1,1,2)$ in the $(x^2-y^4)y$ case only represents
one possible morphism.

In order to check the last column, it is enough to compute the constants of 
quasi-adjunction (cf.~\cite{arcataquasiadj}) different from $\frac{1}{2}$, since $k$ 
is a constant of quasi-adjunction different from $\frac{1}{2}$ if and only if $1-k$ is 
an element of the spectrum of the singularity corresponding to the part of weight 1 (cf.~\cite{loeser}).

Constants of quasi-adjunction can be found in terms of a resolution of the singularity as follows.

If $f_1^{a_1}(x,y)...f_r^{a_r}(x,y)$ is an equation of the germ of plane curve singularity 
at the origin, $\pi: V \rightarrow \CC^2$ is its resolution (i.e. the proper preimage of the 
pull-back is a normal-crossing divisor), $E_k$ are the exceptional components of the resolution 
$\pi$, $N_k$ (resp. $c_k$, resp. $e_k(\phi)$) is the multiplicity along $E_k$ of 
$\pi^*(f_1^{a_1}....f_r^{a_r})$ (resp. $\pi^*(dx \wedge dy)$, resp. $\pi^*(\phi)$, where 
$\phi(x,y)$ is a germ of a function at the origin) then for a fixed $\phi$, $\kappa_{\phi}$ is 
the minimal solution to the system of inequalities (the minimum is taken over $k$):

\begin{equation}
N_k\kappa_{\phi}\ge N_k-e_k(\phi)-c_k-1
\end{equation}
(cf. \cite[(2.3.8)]{charvar}).

For example, for $\AAA_2$-singularity (i.e. the cusp which can be resolved by three blow ups) 
one has for the last exceptional component:

$$e(x)=2, e(y)=3, N=e(x^2-y^3)=6, c=4$$
and hence $\kappa=\frac{1}{6}$. Therefore for the spectrum (i.e. the last column in 
Table~\ref{tab:d-ess}) one obtains the well-known constant $\frac{5}{6}$.

Similarly, for $\DD_4$-singularity $x^3-y^3=0$ (which is resolved by the single blow up) with 
the multiplicities of the components $(4,1,1)$ one obtains only one inequality 

\begin{equation}
6\kappa \ge 6-e_1(\phi)-2.
\end{equation}
Hence for various choices of $\phi$ one obtains $\kappa={1 \over 2}, {1\over 3}, {1\over 6}$.
Thus $\frac{2}{3}, \frac{5}{6}$ are the only elements of the spectrum in $(0,1)$ corresponding
to the part of weight 1.
\end{proof}

In the following tables the notations are the same as in the Table~\ref{tab:d-ess}
and in Table~\ref{tab:d-4-ess} the factorization of the Alexander polynomial is the 
following:
$$
\Delta_{P,\e}(t)=(t-1)^{s_1}(t+1)^{s_2}(t^2+1)^{s_4}.
$$

\begin{table}[h]
\caption{4-essential singularities} 
\centering 
{\small{
\begin{tabular}{|c|c|c|c|c|} 
\hline 
$r$ & $\begin{matrix}\text{reduced} \\ \text{singularity type}\end{matrix}$
& possible types of $\e$ & $\begin{matrix}\text{Alexander} \\ \text{polynomials}\\ (s_1,s_2,s_4)\end{matrix}$
& $\begin{matrix}\text{weight 1 part}\\ \text{of spectrum} \\ \text{in } (0,1)\end{matrix}$
\\ [0.5ex]
\hline \hline

\multirow{2}{*}{$2$} 
& $\AAA_1 \equiv y^2-x^2$ 
& $(\e_1,\e_2)$ & $(1,0,0)$
& \raisebox{-.25ex}{$\emptyset$}
\\ [0.5ex]
\cline{2-5}
& $\AAA_3\equiv y^2-x^4$ 
& $(1,1)$ & $(1,0,1)$
& \raisebox{-.25ex}{$\frac{3}{4}$}
\\ [0.5ex]
\hline

\multirow{1}{*}{$3$} 
& $\DD_4\equiv y^3-x^3$ 
& $(2,1,1)$ & $(2,1,1)$
& \raisebox{-.25ex}{$\frac{3}{4}$}
\\ [0.5ex]
\hline

\multirow{1}{*}{$4$} 
& $(x^4-y^4)$ 
& $(1,1,1,1)$ & $(3,2,2)$
& \raisebox{-.25ex}{$\frac{3}{4}$}
\\ [0.5ex]
\hline
\end{tabular}
}}
\label{tab:d-4-ess}
\end{table}

\begin{table}[h]
\caption{3-essential singularities} 
\centering 
{\small{
\begin{tabular}{|c|c|c|c|c|} 
\hline 
$r$ & $\begin{matrix}\text{reduced} \\ \text{singularity type}\end{matrix}$
& possible types of $\e$ & $\begin{matrix}\text{Alexander} \\ \text{polynomials}\\ (s_1,s_3)\end{matrix}$
& $\begin{matrix}\text{weight 1 part}\\ \text{of spectrum} \\ \text{in } (0,1)\end{matrix}$
\\ [0.5ex]
\hline \hline

\multirow{1}{*}{$2$} 
& $\AAA_1 \equiv y^2-x^2$ 
& $(\e_1,\e_2)$ & $(1,0)$
& \raisebox{-.25ex}{$\emptyset$}
\\ [0.5ex]
\hline

\multirow{1}{*}{$3$} 
& $\DD_4\equiv y^3-x^3$ 
& $(1,1,1)$ & $(2,1)$
& \raisebox{-.25ex}{$\frac{2}{3}$}
\\ [0.5ex]
\hline

\end{tabular}
}}
\label{tab:d-3-ess}
\end{table}

\begin{prop}
Table~\ref{tab:d-4-ess} (resp.~\ref{tab:d-3-ess}) classifies the 4-essential 
(resp. 3-essential) singularities.
\end{prop}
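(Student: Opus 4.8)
The plan is to repeat, with only cosmetic changes, the proof of Proposition~\ref{proofoftable1}, replacing the divisibility bound $6$ used there by $4$ (resp. by $3$). I would start from the A'Campo-type formula~\eqref{eq-alexander-delta} of Proposition~\ref{prop-delta-alexander} for $\Delta_{\cC,\e,P}(t)$, evaluated on an embedded resolution $\pi$ of the (possibly non-reduced) germ $f_1^{\e_1}\cdots f_r^{\e_r}$ at $P$. The non-cancellation mechanism is the same as in the $\delta=6$ case: a terminal exceptional component $E_i$ of $\pi$ carries a multiplicity $\mu_i:=\sum_j m_{i,j}\e_j$ strictly larger than that of every component meeting it, so a primitive $\mu_i$-th root of unity genuinely occurs among the roots of~\eqref{eq-alexander-delta}. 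Hence for $(\cC,P)$ to be $4$-essential (resp. $3$-essential) it is necessary that $\mu_i\in\{1,2,4\}$ (resp. $\mu_i\in\{1,3\}$) for every contributing divisor, i.e. every $E_i$ with $\chi(E_i^0)\ne 0$.

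I would then draw the structural consequences. A singular branch forces a contributing exceptional divisor of multiplicity at least $6$ (equality only for a lone ordinary cusp, as in the proof of Proposition~\ref{proofoftable1}); since $6\nmid4$ and $6\nmid3$, no $4$- or $3$-essential singularity can have a singular branch, so in particular the two tables have no $r=1$ row and every branch is smooth. Hence $(\cC,P)$ is an ordinary $r$-fold point $\prod_{j=1}^{r}(y-\lambda_j x^{k_j})$ and the multiplicities arising along $\pi$ are $\ZZ_{\ge0}$-combinations of the $\e_j$; imposing $\mu_i\mid\delta$ at every contributing step bounds both $r$ and the admissible weight vectors. For instance an ordinary $r$-fold point with pairwise distinct tangents is $\delta$-essential iff $(\sum_j\e_j)\mid\delta$ when $r\ge3$ (and always when $r=2$, where the only exceptional divisor has $\chi(E^0)=0$), while the tacnode $\AAA_3$ is $\delta$-essential iff $2(\e_1+\e_2)\mid\delta$. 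Proceeding inductively on $r$ with the bookkeeping of Proposition~\ref{proofoftable1} — deleting a branch from an admissible configuration leaves a reduced type that still admits an admissible weight vector, and a reduced $r$-branch type whose only admissible weight is $(1,\dots,1)$ cannot arise by deleting a branch from an $(r+1)$-branch type — one is left, for $\delta=4$, with the node and any $(\e_1,\e_2)$, the tacnode $\AAA_3$ with $(1,1)$, the triple point $\DD_4$ with $(2,1,1)$ and the quadruple point $x^4-y^4$ with $(1,1,1,1)$; and for $\delta=3$, with the node and any $(\e_1,\e_2)$ and $\DD_4$ with $(1,1,1)$. Feeding the resolution data back into~\eqref{eq-alexander-delta} and simplifying the product of cyclotomic factors yields the fourth column, $(s_1,s_2,s_4)$, resp. $(s_1,s_3)$.

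For the fifth column, the weight-one part of the spectrum lying in $(0,1)$, I would proceed exactly as in Proposition~\ref{proofoftable1}: compute, from the contributing divisors, the constants of quasi-adjunction via the inequalities $N_k\kappa\ge N_k-e_k(\phi)-c_k-1$, discard the value $\frac{1}{2}$, and record $1-\kappa$ for the rest. One finds that the only constant different from $\frac12$ occurring in the $\delta=4$ cases is $\frac14$ (e.g. from the unique relevant blow-up of $\DD_4$ with $(2,1,1)$, and likewise for $\AAA_3$ with $(1,1)$ and $x^4-y^4$ with $(1,1,1,1)$), giving the spectral value $\frac34$, whereas in the $\delta=3$ case the blow-up of $\DD_4$ with $(1,1,1)$ gives $\kappa=\frac13$, hence $\frac23$; the node contributes no spectral value, so its entry is empty.

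I do not expect a conceptual obstacle here, since the whole argument transcribes the $\delta=6$ proof. The one point demanding care is making the case analysis visibly exhaustive: one must rule out that some tangency pattern among several smooth branches (for instance three or more branches sharing a common tangent to higher order) secretly produces a new $4$- or $3$-essential type, and it is the inductive deletion step — together with the a priori bound $\mu_i\le\delta\le4$ on contributing multiplicities — that keeps this verification finite and short.
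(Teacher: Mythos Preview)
Your proposal is correct and takes essentially the same approach as the paper: the paper's own proof is the single sentence ``The proof is same as the proof of Proposition~\ref{proofoftable1}'', and you have simply unpacked what that means for $\delta=4$ and $\delta=3$, with the key observation that the bound $\mu_i\mid\delta\le 4$ already excludes any singular branch and hence any $r=1$ row. One cosmetic remark: calling $\prod_j(y-\lambda_j x^{k_j})$ an ``ordinary $r$-fold point'' is a slight abuse of language (ordinary usually implies pairwise transversal tangents), but this does not affect the argument.
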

 
\noindent The proof is same as the proof of Proposition~\ref{proofoftable1}.

\begin{cor}\label{oneroot} 
For simple $\delta$-essential singularities (i.e. of type $\AAA$ or $\DD$) 
there is at most one eigenvalue of order $\delta' \vert \delta$ ($\delta' > 2$) 
for the action of the monodromy on the Hodge component $\Gr^0_F\Gr^W_1$
of the Milnor fiber. 
\end{cor}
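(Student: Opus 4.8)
The plan is to read the corollary off the classification of simple $\delta$-essential singularities already obtained in Tables~\ref{tab:d-ess}, \ref{tab:d-4-ess} and~\ref{tab:d-3-ess} (Proposition~\ref{proofoftable1} and its $3$- and $4$-essential counterparts), after translating the statement into the language of the fifth column of those tables. Recall from the corollary stated right after Lemma~\ref{comparemhs} that, for a germ with semi-simple monodromy of order $N$, the generator $z\mapsto z\cdot\exp(2\pi\sqrt{-1}/N)$ of the deck group of the surface singularity~\eqref{localcover} acts on $\Gr^0_F\Gr^W_1$ of the cohomology of the Milnor fiber with eigenvalues precisely $e^{2\pi\sqrt{-1}\alpha}$, where $\alpha$ runs over the weight-$1$ part of the spectrum lying in $(0,1)$, i.e. over the entries of the fifth column of the appropriate table. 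So the corollary reduces to the following bookkeeping: for every row of Tables~\ref{tab:d-ess}--\ref{tab:d-3-ess} whose reduced type is $\AAA_n$ or $\DD_n$ and every admissible $\e$, when the numbers in the fifth column are written in lowest terms $p_i/q_i$, the denominators $q_i>2$ are pairwise distinct, and each corresponding eigenvalue occurs in $\Gr^0_F\Gr^W_1$ with multiplicity one.

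First I would run through the rows with reduced type $\AAA_n$ or $\DD_n$. The fifth column there is: $\emptyset$ for $\AAA_1$; a single entry, $\frac56$ (order $6$) or $\frac34$ (order $4$), for $\AAA_2$ and $\AAA_3$; $\{\frac23,\frac56\}$ (orders $3$ and $6$) for $\AAA_5$, for the non-reduced $\DD_4$ with $\e\in\{(4,1,1),(3,2,1),(2,2,2)\}$, and for $\DD_6$ with $\e=(1,1,2)$; $\{\frac23\}$ (order $3$) for the reduced $\DD_4$ with $\e=(1,1,1)$; and $\{\frac34\}$ (order $4$), resp. $\{\frac23\}$ (order $3$), for the $4$-essential, resp. $3$-essential, $\DD_4$ row. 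In each case the orders $\delta'>2$ that occur are either none, or exactly one, or the two \emph{distinct} orders $3$ and $6$; so for each fixed $\delta'>2$ at most one entry has denominator $\delta'$. The only fifth-column entries containing two fractions of equal denominator are those equal to $\{\frac13,\frac23\}$, and these arise only for the ordinary multiple points $x^k-y^k$ with $k\ge4$ and for $(x^2-y^4)(y^2-x^4)$, none of which is of type $\AAA$ or $\DD$; hence they fall outside the corollary. Incidentally $\frac12$ never occurs in any of these columns, so the hypothesis $\delta'>2$ is not actually restrictive for simple types.

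The step I expect to require the most care is the multiplicity-one assertion, i.e. that each listed spectral value contributes only a one-dimensional piece of $\Gr^0_F\Gr^W_1$, so that \emph{at most one spectral value of denominator $\delta'$} becomes \emph{at most one eigenvalue of order $\delta'$}. I would obtain this from the resolution description behind Proposition~\ref{prop-delta-alexander} together with the quasi-adjunction computation carried out in the proof of Proposition~\ref{proofoftable1}: the eigenvalues on $\Gr^0_F\Gr^W_1$ are governed by the exceptional divisors $E_k$ with $\chi(E_k^{\circ})\neq 0$, each supplying a primitive root of unity of order dividing $N$ which, by the maximal-multiplicity argument used there, does not cancel in~\eqref{eq-alexander-delta}; and for a simple germ the minimal embedded resolution is a chain (for $\AAA_n$) or a short tree (for $\DD_n$) in which at most one such component realizes any given admissible order, forcing multiplicity one. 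The point to stress is that the bare local Alexander polynomial is not enough: its exponents $s_3,s_4,s_6$ record multiplicities on all of $H^1$ of the Milnor fiber, where an eigenvalue of order $3$ or $6$ appears together with its complex conjugate; it is the restriction to $\Gr^0_F\Gr^W_1$ --- keeping only one member of each conjugate pair, via Lemma~\ref{comparemhs} and~\cite{loeser} --- that yields the sharp bound.
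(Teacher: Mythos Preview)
Your argument is correct and, like the paper's, rests on inspection of Tables~\ref{tab:d-ess}--\ref{tab:d-3-ess}; but you work harder than necessary because you focus on the fifth column and then need a separate resolution-theoretic argument to secure multiplicity one. The paper instead reads the multiplicities straight from the \emph{fourth} column: for every $\AAA$- or $\DD$-row one has $s_2,s_3,s_4,s_6\le 1$, so every root $\ne 1$ of the local Alexander polynomial is simple in the full $H^1$. The Hodge splitting then does the rest: since complex conjugation interchanges the monodromy eigenspaces on $\Gr^0_F\Gr^W_1$ and $\Gr^1_F\Gr^W_1$, a simple root of order $\delta'>2$ contributes exactly one of its two primitive $\delta'$-th roots to $\Gr^0_F\Gr^W_1$, and $-1$ (being self-conjugate) cannot sit on $\Gr^0_F$ at all when $s_2\le 1$. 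Thus your closing remark that ``the bare local Alexander polynomial is not enough'' is too pessimistic: together with the conjugation symmetry it is exactly what the paper uses, and it replaces your resolution discussion by a one-line parity argument. Your route via the fifth column and constants of quasi-adjunction is a valid alternative---it has the virtue of naming the specific eigenvalue that survives on $\Gr^0_F$---but it buys that extra precision at the cost of the multiplicity step you flag as delicate.
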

 
\begin{proof} Inspection on Tables~\ref{tab:d-ess}-\ref{tab:d-3-ess} shows that 
except for $t=1$, the multiplicity of each root of the characteristic polynomial
of the monodromy is equal to one. Since the conjugation of an eigenvalue of the monodromy 
acting on $\Gr^0_F\Gr^W_1$ is an eigenvalue of its action on $\Gr^1_F\Gr^W_1$ it follows that 
$t=-1$ is not an eigenvalue of the monodromy action on  $\Gr^0_F\Gr^W_1$.
Again, by direct inspection of Tables~\ref{tab:d-ess}-\ref{tab:d-3-ess} the result follows.
\end{proof} 
 
\subsection{Decomposition of the Albanese variety of cyclic covers 
branched over $\delta$-total curves}

The following auxiliary results will be useful in the rest of the arguments. 
 
\begin{lemma}\label{deltaessentialalexander} 
Let $\cC$ be a $\delta$-partial curve with $r$ irreducible components. Then the polynomial
\begin{equation}\label{delta3}
\array{ll}
\Delta_{\cC,\e}^\delta:=(t-1)^{r-1}
\displaystyle\prod_{k|\delta, k>1} \varphi_k(t)^{s_k},
\endarray
\end{equation}
divides the Alexander polynomial $\Delta_{\cC,\e}$ of $\cC$ w.r.t.~$\e$,
where $s_i$ above denotes the multiplicity 
of the primitive root of unity of degree $i$ in 
the Alexander polynomial of~$\cC$.
 
Moreover, let $V_{\delta}$ be the cyclic cover of $\PP^2$ of degree $\delta$ 
branched over a curve $\cC$ according to the multiplicities 
$\e=(\e_1,\dots,\e_r)$. Then, the characteristic polynomial of the deck 
transformation acting on $H_1(V_{\delta},\CC)$ equals 
\begin{equation} 
{\Delta^\delta_{\cC,\e} \over {(t-1)^{r-1}}}. 
\end{equation} 
 
If $\cC$ is $\delta$-total, then 
$$\Delta^\delta_{\cC,\e}=\Delta_{\cC,\e}.$$
\end{lemma}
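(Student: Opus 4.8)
The plan is to establish the three assertions in turn, the computation of the characteristic polynomial of the deck transformation $T$ on $H_1(V_\delta,\CC)$ being the real content; the first divisibility is essentially formal and the $\delta$-total statement then drops out. The divisibility $\Delta^\delta_{\cC,\e}\mid\Delta_{\cC,\e}$: by definition $s_k$ is the exact multiplicity of a primitive $k$-th root of unity among the zeros of $\Delta_{\cC,\e}$, so each $\varphi_k(t)^{s_k}$ is a factor of $\Delta_{\cC,\e}$ in $\QQ[t]$, and since the $\varphi_k$ with $k\mid\delta$ are pairwise coprime, $\prod_{k\mid\delta,\,k>1}\varphi_k(t)^{s_k}$ divides $\Delta_{\cC,\e}$. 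It remains to see $(t-1)^{r-1}\mid\Delta_{\cC,\e}$, which is the divisibility at infinity for an $r$-component curve: $\Delta_{\cC,\e}$ is the single-variable specialization via $t_i\mapsto t^{\e(\gamma_i)}$ of the multivariable Alexander polynomial, times the extra $(t-1)^{r-1}$ distinguishing the $\e$-cyclic cover of $\PP^2\setminus(\cC_0\cup\cC)$ from the universal abelian cover (cf.\ the proof of Proposition~\ref{prop-divisibility} and \cite{Duke,charvar}). As $t=1$ is not a zero of any $\varphi_k$ with $k>1$, these coprime factors multiply to give $\Delta^\delta_{\cC,\e}=(t-1)^{r-1}\prod_{k\mid\delta,\,k>1}\varphi_k^{s_k}\mid\Delta_{\cC,\e}$.

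For the characteristic polynomial, decompose $H_1(V_\delta,\CC)=\bigoplus_{\zeta^\delta=1}H_\zeta$ into eigenspaces of $T$. The invariant part is $H_1(V_\delta,\CC)^{\ZZ_\delta}=H_1(V_\delta/\ZZ_\delta,\CC)=0$ since $V_\delta/\ZZ_\delta$ is birational to $\PP^2$, hence a rational surface; so the eigenvalue $1$ does not occur. To compute $\dim_\CC H_\zeta$ for $\zeta\neq 1$, $\zeta^\delta=1$, I would rerun the Mayer--Vietoris argument of Lemma~\ref{albaneseeigenvalues} for the reducible $\delta$-partial curve $\cC$ and the $\delta$-fold cover determined by $\e\bmod\delta$: the inclusion of a regular neighbourhood $T(\cC)\subset\PP^2\setminus\cC_0$ is surjective on $\pi_1$ (Lefschetz), hence on the $H_1$ of the $\delta$-covers, and splitting the cover as $\bigcup_P(B_P\setminus\cC)_\delta\cup(U\setminus\cC)_\delta$ shows, exactly as in \eqref{mayervietoris}--\eqref{inclusion}, that $H_\zeta$ is a quotient of $\bigoplus_P H_1((B_P\setminus\cC)_\delta,\CC)_\zeta$, the contributions of the smooth locus of $\cC$ and of $\cC_0$ vanishing in $H_1(V_\delta)$. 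A $\delta$-coprime point $P$ contributes nothing to $H_\zeta$ (its local deck transformation has no eigenvalue a nontrivial $\delta$-th root of unity, by definition of $\delta$-coprime), while a $\delta$-essential point contributes only $\delta$-th roots of unity, each $\zeta\neq1$ with multiplicity at most one by the classification in Tables~\ref{tab:d-ess}--\ref{tab:d-3-ess} (equivalently Corollary~\ref{oneroot}). Combining this upper bound with the lower bound from the localization argument of \cite{Duke} — the divisibility $\Delta_{\cC,\e}\mid\prod_P\Delta_{\cC,\e,P}\prod(t^{\e_i}-1)^{k_i}$ of Proposition~\ref{prop-divisibility} together with semisimplicity of the local Alexander modules away from $t=1$ shows that the only relations among the local contributions are those recorded by $\Delta_{\cC,\e}$ — yields $\dim_\CC H_\zeta=s_{\mathrm{ord}(\zeta)}$ for all $\zeta\neq1$. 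Therefore the characteristic polynomial of $T$ on $H_1(V_\delta,\CC)$ equals $\prod_{\zeta^\delta=1,\ \zeta\neq1}(t-\zeta)^{s_{\mathrm{ord}(\zeta)}}=\prod_{k\mid\delta,\,k>1}\varphi_k(t)^{s_k}=\Delta^\delta_{\cC,\e}/(t-1)^{r-1}$.

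Finally, if $\cC$ is $\delta$-total then every zero of $\Delta_{\cC,\e}$ is a $\delta$-th root of unity, so $\Delta_{\cC,\e}=(t-1)^{s_1}\prod_{k\mid\delta,\,k>1}\varphi_k^{s_k}$ and the divisibility already proved gives $\Delta_{\cC,\e}/\Delta^\delta_{\cC,\e}=(t-1)^{s_1-(r-1)}$; one then concludes $s_1=r-1$, hence $\Delta^\delta_{\cC,\e}=\Delta_{\cC,\e}$, using that the multiplicity of $t=1$ in $\Delta_{\cC,\e}$ is exactly $r-1$ (the $(t-1)$-towers produced locally at the singularities and at $\cC_0$ cancelling down to precisely $r-1$ in the same Mayer--Vietoris computation; cf.\ \cite{Duke,charvar}). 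The hardest step is clearly the equality $\dim_\CC H_\zeta=s_{\mathrm{ord}(\zeta)}$ for $\zeta\neq1$, i.e.\ showing that the equivariant Mayer--Vietoris surjection from the local pieces onto $H_1(V_\delta)$ is, on each such eigenspace, an isomorphism and not merely a surjection — that no cancellation beyond that encoded in $\Delta_{\cC,\e}$ occurs. For irreducible nodal--cuspidal $\cC$ this is \cite{Duke}; the work here is to push the mixed-Hodge-theoretic form of that argument (as in Lemma~\ref{albaneseeigenvalues}) through for a reducible, possibly non-reduced $\delta$-partial curve and an arbitrary admissible $\e$, which is precisely what the partition of $\Sing(\cC)$ into $\delta$-essential and $\delta$-coprime points is designed to enable.
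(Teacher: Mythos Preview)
Your treatment of the first and last assertions matches the paper's: both are immediate from the definitions together with the well-known fact that the multiplicity of $t=1$ in $\Delta_{\cC,\e}$ is exactly $r-1$.

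For the ``moreover'' part, however, you take a substantially longer route than the paper and in doing so introduce hypotheses the lemma does not carry. You rerun the Mayer--Vietoris localization argument of Lemma~\ref{albaneseeigenvalues} and, to control the local contributions, invoke the classification Tables~\ref{tab:d-ess}--\ref{tab:d-3-ess} and Corollary~\ref{oneroot} (``each $\zeta\neq1$ with multiplicity at most one''), as well as semisimplicity of the \emph{local} Alexander modules. Both of these require the $\delta$-essential singularities to be simple (type $\AAA$ or $\DD$), which is a hypothesis of Theorem~\ref{thm-main-alb} but \emph{not} of the present lemma. Your sandwiching argument for $\dim_{\CC}H_\zeta=s_{\mathrm{ord}(\zeta)}$ is also not cleanly set up: the Mayer--Vietoris surjection gives $\dim H_\zeta\le\sum_P\dim H_1((B_P\setminus\cC)_\delta)_\zeta$, while Proposition~\ref{prop-divisibility} gives $s_{\mathrm{ord}(\zeta)}\le\sum_P(\text{local multiplicity of }\zeta)$; these two inequalities do not squeeze $\dim H_\zeta$ against $s_{\mathrm{ord}(\zeta)}$ without further input.

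The paper's proof bypasses all of this: the identity is a direct consequence of the relation between the homology of the branched and unbranched cyclic covers and the Alexander module, valid for an arbitrary plane curve and arbitrary $\e$ (cf.~\cite{Duke}). Concretely, for $\zeta^\delta=1$, $\zeta\neq1$, one has $H_1(V_\delta,\CC)_\zeta\cong (K_\e/K_\e'\otimes\CC)_{t=\zeta}$, and since the global Alexander module of a plane-curve complement is semisimple (the monodromy of the associated Milnor fibration has finite order), this eigenspace has dimension exactly $s_{\mathrm{ord}(\zeta)}$; the eigenvalue $1$ is absent because $V_\delta/\mu_\delta$ is rational. No local analysis, and in particular no restriction on the singularity types, is needed. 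What you flag as ``the hardest step'' is in fact the standard input from~\cite{Duke}.
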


\begin{proof}The first (resp. last) assertion about the Alexander polynomial follows directly 
from the definitions of $\delta$-partial (resp. $\delta$-total) curves and the well-known 
fact that the multiplicity of the root $t=1$ for a curve with $r$ irreducible 
components in the Alexander polynomial is equal to $r-1$.

The \emph{moreover} part follows from the relation between the homology of 
branched and unbranched covers (cf.~\cite{Duke}).
\end{proof}

\begin{lemma}\label{localdecomposition} 
For a plane curve singularity $P$,
denote by $\Alb_P$ the local Jacobian (cf. section~\ref{subsec-MW}) 
Let $\cC$ be a $\delta$-partial curve and 
let $V_{\delta}$ be cyclic cover of degree $\delta$ of $\PP^2$ branched over $\cC$. 
Then $\Alb(V_{\delta})$ is isogenous to a quotient of the product of local Jacobians 
of singularities of~$\cC$. 
\end{lemma}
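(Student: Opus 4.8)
The plan is to mimic, at the level of Hodge structures, the argument already used in the proof of Lemma~\ref{albaneseeigenvalues}, replacing the ad hoc ``nodes and cusps'' bookkeeping by the general local data recorded in Tables~\ref{tab:d-ess}--\ref{tab:d-3-ess} and, more intrinsically, in Proposition~\ref{prop-delta-alexander}. First I would pass to a tubular neighborhood $T(\cC)$ of $\cC$ in $\PP^2\setminus\cC_0$ and invoke the surjection $\pi_1(T(\cC)\setminus\cC)\twoheadrightarrow\pi_1(\PP^2\setminus\cC_0\cup\cC)$, which is a consequence of the Lefschetz hyperplane theorem, exactly as in~\eqref{surjectivityone}. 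Taking the $\e$-cyclic cover of degree $\delta$ and using the Mayer--Vietoris decomposition
$$
(T(\cC)\setminus\cC)_\delta=\bigcup_P(B_P\setminus\cC)_\delta\ \cup\ (U\setminus\cC)_\delta,
$$
with $B_P$ small balls around the singular points $P$ of $\cC$ and $U$ a neighborhood of the smooth locus, one gets a surjection of mixed Hodge structures
$$
\bigoplus_P H_1\bigl((B_P\setminus\cC)_\delta,\QQ\bigr)\ \oplus\ H_1\bigl((U\setminus\cC)_\delta,\QQ\bigr)\ \longrightarrow\ H_1\bigl((T(\cC)\setminus\cC)_\delta,\QQ\bigr),
$$
which then maps onto $H_1(V_\delta,\QQ)$ by the surjectivity of~\eqref{inclusion} (this, too, is part of the argument reproduced from~\cite{Duke}). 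The key point, again borrowed verbatim from the cuspidal case, is that the composite kills the contribution of the smooth part $H_1((U\setminus\cC)_\delta,\QQ)$, so that $H_1(V_\delta,\QQ)$ is a quotient of $\bigoplus_P H_1((B_P\setminus\cC)_\delta,\QQ)$ as a mixed Hodge structure equivariant for the deck group $\ZZ_\delta$.

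Next I would identify each local summand with the cohomology of the link of the surface singularity. By Lemma~\ref{comparemhs}, for each $P$ the space $H_1((B_P\setminus\cC)_\delta,\QQ)$ carries the mixed Hodge structure of the link of $z^\delta=F_\e$ near $P$, which (under the semisimplicity hypothesis built into the definition of $\delta$-essential singularities via Tables~\ref{tab:d-ess}--\ref{tab:d-3-ess}) is pure of type $(-1,0),(0,-1)$ on the relevant graded piece. The equivalence of categories of polarizable such Hodge structures with $1$-motifs (resp. abelian varieties), cited from~\cite{deligneIII}, then attaches to $P$ its local Jacobian $\Alb_P$; concretely $\Gr^W_1 H_1((B_P\setminus\cC)_\delta)$ is, up to isogeny, the first homology of $\Alb_P$ with its $\ZZ_\delta$-action. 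Taking $\Gr^W_1$ throughout the surjection above, and recalling that $\Gr^W_1 H_1(V_\delta,\QQ)=H_1(\Alb(V_\delta),\QQ)$, one obtains an equivariant surjection $\bigoplus_P H_1(\Alb_P,\QQ)\twoheadrightarrow H_1(\Alb(V_\delta),\QQ)$, which dualizes to an isogeny of $\Alb(V_\delta)$ onto an abelian subvariety of $\prod_P\Alb_P$; since an abelian variety that is a quotient (equivalently, up to isogeny, a subvariety) of $\prod_P\Alb_P$, this is exactly the assertion that $\Alb(V_\delta)$ is isogenous to a quotient of $\prod_P\Alb_P$.

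The main obstacle, as I see it, is bookkeeping rather than conceptual: one must make sure the local Mayer--Vietoris argument is genuinely a sequence of \emph{mixed Hodge structures} (this requires the open covers to be compatible with the resolutions underlying the MHS on the links, cf.~\cite{durfee,durfee1}) and that passing to $\Gr^W_1$ is exact on the relevant terms—this uses that $H_0$-type and weight-$0$/weight-$2$ contributions, which account for the $(t^{\e_i}-1)$ factors in Proposition~\ref{prop-divisibility}, sit in $\Gr^W_0$ or $\Gr^W_2$ and hence drop out after taking $\Gr^W_1$. Everything else is a direct transcription of the proof of Lemma~\ref{albaneseeigenvalues}, with the cusp replaced by an arbitrary $\delta$-essential germ and ``$\Alb(V_6)=E_0^q$'' replaced by the weaker ``isogenous to a quotient of $\prod_P\Alb_P$''; in particular no control of the eigenvalues of the deck action is needed here, only the surjectivity statements, so the argument goes through verbatim once the Hodge-theoretic functoriality above is in place.
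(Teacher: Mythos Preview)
Your proposal is correct and follows exactly the approach the paper intends: the paper's own proof is the single sentence ``It follows from the proof of Lemma~\ref{albaneseeigenvalues},'' and you have faithfully unpacked that reference by rerunning the tubular-neighborhood Mayer--Vietoris argument at the level of mixed Hodge structures and then passing to $\Gr^W_1$ to extract the statement about Albanese varieties. If anything, your write-up is more detailed than what the paper provides.
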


\begin{proof} 
It follows from the proof of Lemma~\ref{albaneseeigenvalues}.
\end{proof}
 
Finally, we will give a description of the multiplicities $s_i$ in terms
of the Albanese variety of the ramified coverings of $\PP^2$. Denote by 
$E_0$ (resp. $E_{1728}$) the elliptic curve with $j$-invariant zero 
(resp. $1728$). Then one has the following.
 
\begin{theorem} 
\label{thm-main-alb}
Let $\cC$ be a $\delta$-partial curve ($\delta=3,4,$ or $6$)
with $\delta$-essential singularities of type $\AAA$ and $\DD$. 
Then the Albanese variety $\Alb(V_{\delta})$ corresponding 
to the curve $\cC$ can be decomposed as follows: 
\begin{equation}\label{isogeniesdeltaalbanese}
\array{ll}
\Alb(V_{\delta}) = E_{0}^{s_3} & \text{ if } \delta=3,\\
\Alb(V_{\delta}) \sim A \times E_{1728}^{s_4} & \text{ if } \delta=4,\\
\Alb(V_{\delta}) \sim A \times E_{0}^{s_3+s_6} & \text{ if } \delta=6,\\
\endarray
\end{equation}
where $s_i$ is the multiplicity of the $i$-th primitive root of unity in 
$\Delta_{\cC,\e}(t)$, $A$ is an abelian variety of dimension $s_2$, 
``$=$'' means isomorphic, and ``$\sim$'' means \emph{isogenous}.
\end{theorem}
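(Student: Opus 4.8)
The plan is to exploit the action of the group $\mu_\delta$ of deck transformations of the cover $V_\delta\to\PP^2$ on $\Alb(V_\delta)$ and to decompose $\Alb(V_\delta)$, in the category of abelian varieties up to isogeny, into its $\mu_\delta$-isotypic pieces. Since $\QQ[\mu_\delta]\cong\prod_{k\mid\delta}\QQ(\zeta_k)$ is a product of fields and $\Alb(V_\delta)^{\mu_\delta}=\Alb(\PP^2)=0$, one obtains $\Alb(V_\delta)\sim\prod_{k\mid\delta,\,k>1}B_k$, where on $B_k$ a generator $T$ of $\mu_\delta$ acts with eigenvalues equal to the primitive $k$-th roots of unity. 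When $\delta$ is even the piece $B_2$ will be the abelian variety $A$ of the statement; on each remaining piece ($k\in\{3,4,6\}$) the restriction $T|_{B_k}$ is an automorphism of order exactly $k$, and the key move is to apply Roan's Decomposition Theorem (Theorem~\ref{roanth}) to each $(B_k,T|_{B_k})$ in order to split $B_k$ as a product of $s_k$ elliptic curves with complex multiplication by $\ZZ[\zeta_k]$, i.e.\ copies of $E_0$ when $k\in\{3,6\}$ (as $\QQ(\zeta_6)=\QQ(\zeta_3)$) and of $E_{1728}$ when $k=4$.

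To carry this out I would proceed in two steps. First, the second part of Lemma~\ref{deltaessentialalexander} gives that the characteristic polynomial of $T$ on $H_1(V_\delta,\CC)$ equals $\prod_{k\mid\delta,\,k>1}\varphi_k(t)^{s_k}$, so that $B_k$ has $\QQ$-dimension $\varphi(k)\,s_k$ and $T|_{B_k}$ has order exactly $k$. Second --- the heart of the matter --- I need to determine the Hodge type of the eigenspaces of $T$ on $H^1(V_\delta)$. Here I would reproduce the tubular-neighbourhood and Mayer--Vietoris argument from the proof of Lemma~\ref{albaneseeigenvalues}: the surjection $\pi_1(T(\cC)\setminus\cC)\to\pi_1(\PP^2\setminus\cC)$ induces, after passing to the $\delta$-fold cyclic covers and to first homology, a surjection of mixed Hodge structures; the Mayer--Vietoris sequence splits its source into the contributions of the links of the singularities (computed by Lemma~\ref{comparemhs} in terms of the Milnor fibres) and the contribution of the smooth locus of $\cC$, the latter mapping to zero in $H_1(V_\delta)$. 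Consequently every eigenvalue of $T$ on $\Gr^0_F H^1(V_\delta)$ occurs among the eigenvalues of the local monodromies acting on $\Gr^0_F\Gr^W_1$ of the Milnor fibres of the singularities of $\cC$. Since all singularities are of type $\AAA$ or $\DD$, Corollary~\ref{oneroot} applies: the value $-1$ never occurs there, and for each $k>2$ at most one primitive $k$-th root of unity occurs. Combining this with the characteristic polynomial above and with the Hodge symmetry $\overline{\Gr^0_F}=\Gr^1_F$ on the weight-one part forces, for each $k\mid\delta$ with $k>2$, a single primitive $k$-th root of unity to be an eigenvalue of $T$ on $H^0(V_\delta,\Omega^1_{V_\delta})$ with multiplicity exactly $s_k$; equivalently $\dim B_k=s_k$ and $\#\Phi_{T|_{B_k}}=1=\varphi(k)/2$.

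With this at hand I would apply Theorem~\ref{roanth} to each $B_k$ ($k\in\{3,4,6\}$) with $\alpha=T|_{B_k}$ and $d=k$. Its hypothesis that the fixed locus of every power $\alpha^i$, $1\le i<k$, be finite follows, exactly as in Lemma~\ref{albaneseeigenvalues}, from the fact that the eigenvalue of $T$ on the $1$-forms of $B_k$ is a \emph{primitive} $k$-th root of unity: a positive-dimensional fixed point set of some $\alpha^i$ would produce a $T$-eigenvalue on the $1$-forms of order a proper divisor of $k$. Roan's theorem then gives $B_k=X_1\times\cdots\times X_{s_k}$ with each $X_j$ an abelian variety of CM-type $(\QQ(\zeta_k),\Phi)$, $\#\Phi=1$; hence $X_j$ is an elliptic curve whose endomorphism ring contains, and therefore equals, $\ZZ[\zeta_k]$, so $X_j\cong E_0$ for $k\in\{3,6\}$ and $X_j\cong E_{1728}$ for $k=4$. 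Reassembling the pieces yields $\Alb(V_4)\sim A\times E_{1728}^{s_4}$ and $\Alb(V_6)\sim A\times E_0^{s_3}\times E_0^{s_6}=A\times E_0^{s_3+s_6}$, with $A=B_2$ an abelian variety of dimension $s_2$ by the dimension count above. When $\delta=3$ there is no order-$2$ character, so $A$ is absent, $\Alb(V_3)=B_3$, $\mu_3$ acts on the whole of $H^0(\Omega^1_{V_3})$ through a single eigenvalue, and Roan's theorem provides an honest product decomposition rather than merely an isogeny, giving the equality $\Alb(V_3)=E_0^{s_3}$; the isogenies in the two even cases cannot be avoided, since the idempotents of $\ZZ[\mu_\delta]$ cutting out the isotypic pieces are not integral and the factor $A$ is intrinsically defined only up to isogeny.

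The step I expect to be the main obstacle is the second one: transferring the purely local statement of Corollary~\ref{oneroot} about the monodromies on $\Gr^0_F\Gr^W_1$ of the Milnor fibres to a statement about $T$ acting on $\Gr^0_F H^1$ of the \emph{global} smooth projective variety $V_\delta$. This requires the full mixed-Hodge-theoretic machinery already used in Lemma~\ref{albaneseeigenvalues} and in \cite{Duke}: the canonical MHS on the cohomology of punctured neighbourhoods of the exceptional sets (Lemma~\ref{comparemhs}, \cite{durfee}), the fact that the Mayer--Vietoris and inclusion sequences involved are $\mu_\delta$-equivariant sequences of mixed Hodge structures, and the vanishing inside $H_1(V_\delta)$ of the class coming from the smooth locus of $\cC$. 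A secondary technical point is the bookkeeping identifying the dimension of $A$, the $(-1)$-isotypic part of $\Alb(V_\delta)$, with $s_2$, which follows from Lemma~\ref{deltaessentialalexander} together with the Hodge symmetry used above. One could alternatively organize the argument around Lemma~\ref{localdecomposition}, presenting $\Alb(V_\delta)$ up to isogeny as a quotient of the product of the local Jacobians of the singularities of $\cC$ --- each of which, for an $\AAA$ or $\DD$ type, is already a product of copies of $E_0$, $E_{1728}$ and an abelian variety supporting the $(-1)$-action --- but even then one still needs Roan's theorem together with the eigenvalue count to pin down the exact exponents $s_k$.
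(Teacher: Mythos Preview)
Your proposal is correct and follows essentially the same strategy as the paper's proof: first split $\Alb(V_\delta)$ up to isogeny into isotypic pieces under the deck action, identify the $(-1)$-piece as $A$, and then apply Roan's Decomposition Theorem (Theorem~\ref{roanth}) to each remaining piece, the hypothesis $\#\Phi_\alpha=\varphi(k)/2$ being supplied by Corollary~\ref{oneroot} via the local-to-global Hodge argument of Lemma~\ref{albaneseeigenvalues}. The only cosmetic difference is that the paper obtains the first coarse decomposition by invoking a separate result of Roan (\cite[Theorem~2.1]{Lange-Roan}), whereas you phrase it as the isotypic decomposition coming from $\QQ[\mu_\delta]\cong\prod_{k\mid\delta}\QQ(\zeta_k)$; these are equivalent, and your observation that for $\delta=3$ no preliminary isogeny is needed (so Theorem~\ref{roanth} yields an actual isomorphism) is exactly how the paper justifies the ``$=$'' in that case.
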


\begin{proof} 
The deck transformation of $V_{\delta}$ induces the action of the cyclic group 
$<\alpha>$ on the Albanese variety of $V_{\delta}$. It follows from Roan's 
Decomposition Theorem (cf.~\cite[Theorem 2.1]{Lange-Roan})
that $\Alb(V_{\delta})$ is isogenous to a 
product $X_1 \times .... \times X_e$ 
where $e$ is the number of orders of eigenvalues of $\alpha$. 
We shall show that abelian varieties $X_i$ can be decomposed 
further to yields isogenies \ref{isogeniesdeltaalbanese}.
By Lemma~\ref{deltaessentialalexander}, one has that $1$ is not an eigenvalue 
of $\alpha$, and hence each component $X_i$ supports an automorphism whose order 
is a non-trivial divisor of $\delta$.
We denote by $A$ the component supporting an automorphism of order 2.
In the cases of $\AAA$ or $\DD$ singularities, Corollary~\ref{oneroot} 
yields that the action of $\alpha$ on each component $X_i$ has 
at most one eigenvalue. Indeed this is the eigenvalue of the monodromy 
on the component $\Gr^0_F\Gr^W_1$ of the Milnor fiber of the singularities whose 
local Alexander polynomial contains such eigenvalue as a root.
Each eigenvalue of the monodromy on $\Gr^0_F\Gr^W_1$ for a $\delta$-essential 
singularity does not appear on $\Gr^1_F\Gr^W_1$ even if for different
$\delta$-essential singularities. In particular even different 
$\delta$-essential singularities contribute at most to the 
same root of the global Alexander polynomial as follows from the 
table in Corollary~\ref{oneroot}.
This allows one to apply the second Decomposition 
Theorem (cf.~\cite[Theorem 3.2]{Lange-Roan}) as was done above in the case of 
cuspidal curves. The result follows.
\end{proof}

\begin{remark} 
\mbox{}
\begin{enumerate}
\item
If $\cC$ has singularities of type $x^3-y^6$ then the above argument shows that 
$\Alb(V_6)$ has, up to isogeny, the factors which are the Jacobians 
of the projection model of $y^3=x^6-1$ and the elliptic curves of $E_{1728}$
and~$E_0$.

\item
Whenever $r > 3$, the local Jacobians (cf. section \ref{subsec-MW}) 
depend on the moduli describing the local algebraic analytic type
of singularities. For example, in the case 
of singularities $x^4-y^4$ the local Jacobian is the Jacobian 
of the 4-fold cyclic cover branched over 4 points. The quotient by 
an involution of such cover may yield an arbitrary elliptic curve in 
the Legendre family of elliptic curves. Hence, the Albanese variety 
of the cyclic cover $V_{\delta}$ is 
isogenous to the quotient of abelian varieties whose moduli 
a priori depends on the analytic type of the singularities.
It would be interesting to see if one can find examples 
showing that such variations can take place. 
\end{enumerate}
\end{remark}

\subsection{Elliptic threefolds corresponding to $\delta$-curves}

Now we shall relate the Mordell-Weil group of threefolds corresponding to 
$\cC$ to $\Delta_{\cC,\e}$.
Let $W$ denote an elliptic threefold birational to the affine
hypersurface given by the equation:
\begin{equation}\label{threefold3}
\array{ll}
u^2+v^3=F(x,y,1) & \text{ if } \delta=3,6,\\
u^2+v^3=F(x,y,1)v & \text{ if } \delta=4.
\endarray
\end{equation}
One has the following:

\begin{theorem} 
\label{thm-main-delta}
Let $\cC=\{F=0\}$ be a $\delta$-partial curve as in Theorem~\ref{thm-main-alb}, then
\begin{equation}
\array{ll}
\rk \MW(W_F) = s_3 & \text{ if } \delta=3,\\
\rk \MW(W_F) \geq s_4 & \text{ if } \delta=4,\\
\rk \MW(W_F) \geq s_3+s_6 & \text{ if } \delta=6.\\
\endarray
\end{equation}

In addition, if $\cC$ is a $\delta$-total curve, then 
\begin{equation}
\array{ll}
\rk \MW(W_F) = s_4 & \text{ if } \delta=4 \text{ and } \Delta_{\cC,\e}(-1) \ne 0,\\
\rk \MW(W_F) = s_3+s_6 & \text{ if } \delta=6 \text{ and } \Delta_{\cC,\e}(-1) \ne 0,\\
\endarray
\end{equation}
\end{theorem}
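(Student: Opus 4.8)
The plan is to mimic, in the more general $\delta$-partial setting, the two-step chain of isomorphisms that was used for cuspidal curves: first express $\MW(W_F)$ in terms of the invariant part of the Mordell-Weil group of a \emph{split} elliptic threefold $V_\delta\times E$ (where $E=E_0$ for $\delta=3,6$ and $E=E_{1728}$ for $\delta=4$), and then use the decomposition of $\Alb(V_\delta)$ obtained in Theorem~\ref{thm-main-alb} to count the rank. The first step is a verbatim adaptation of Lemma~\ref{equationforw} and Theorem~\ref{irregularityMW}: one checks that, for the equations~\eqref{threefold3}, a birational model of $W_F$ is the quotient of $\bar V_\delta\times_{\PP^1}\bar E$ by the diagonal $\mu_\delta$-action, so that $\MW(W_F)\cong\MW(V_\delta\times E)^{\mu_\delta}$, where the $\CC(V_\delta)$-points of the split threefold are just $\Mor(V_\delta,E)$ modulo the constant maps (the Chow trace). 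The only place where $\delta=4$ differs is that the fiber is $E_{1728}$ rather than $E_0$, so the $\mu_4$-action on the fiber has $j=1728$; this is already foreseen in~\eqref{threefold3} by the extra factor $v$ on the right-hand side.

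Next I would reduce $\Mor(V_\delta,E)$ to $\Hom(\Alb(V_\delta),E)$ via the universal property of the Albanese map, exactly as in Proposition~\ref{mapstoE} and Corollary~\ref{cor-mw}, and then substitute the decomposition of Theorem~\ref{thm-main-alb}. For $\delta=3$ one has $\Alb(V_3)=E_0^{s_3}$ on the nose (there is no order-2 eigenvalue since $\varphi(3)/2=1$ forces every eigenvalue to be a primitive cube root, and there is no weight-$2$ part on the homology of the relevant Milnor fibers), so $\Hom(E_0^{s_3},E_0)=\ZZ[\omega_3]^{s_3}$ and, since $\mu_3$ acts by complex multiplication which commutes with $\End(E_0)$, the whole group is $\mu_3$-invariant; this gives $\rk\MW(W_F)=2s_3$ as a $\ZZ$-module — but here, as in the cuspidal case, the statement is phrased so that $\rk\MW(W_F)=s_3$ once one remembers the answer is counted as a $\ZZ[\omega_3]$-module, or equivalently that the $\ZZ$-rank is $2s_3$ and $\deg\Delta^3_{\cC,\e}$ also equals $2s_3$; in any event the equality with $s_3$ (the exponent) follows from matching with $\Delta^\delta_{\cC,\e}$ in Lemma~\ref{deltaessentialalexander}. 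For $\delta=4,6$ the Albanese only decomposes \emph{up to isogeny}, and it carries an extra factor $A$ of dimension $s_2$ supporting an order-$2$ automorphism; morphisms from $A$ to $E$ need not vanish, so one only gets the inequality $\rk\MW(W_F)\ge s_4$ (resp.\ $\ge s_3+s_6$), because the contribution of $\Hom(A,E)^{\mu}$ to the invariant Mordell-Weil group is a priori nonnegative and is not controlled from below by the Alexander data alone.

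For the \emph{in addition} part, I would argue that the condition $\Delta_{\cC,\e}(-1)\ne 0$ is exactly what kills the factor $A$: if $-1$ is not a root of the Alexander polynomial, then by Lemma~\ref{deltaessentialalexander} the deck transformation $\alpha$ on $H_1(V_\delta,\CC)$ has no eigenvalue equal to $-1$, so $s_2=0$ and $A$ is trivial. Then $\Alb(V_\delta)$ is isogenous to $E_{1728}^{s_4}$ (resp.\ $E_0^{s_3+s_6}$), and since isogenous abelian varieties have the same $\Hom$-groups up to finite index — hence the same rank — one gets $\Hom(\Alb(V_\delta),E)\otimes\QQ\cong\End(E)^{\oplus s_4}\otimes\QQ$ (resp.\ $\oplus(s_3+s_6)$). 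Finally one must check that the $\mu_\delta$-invariant subgroup is the whole thing: this again uses that $\mu_\delta$ acts on each elliptic factor through an element of $\End(E)$ (a primitive $\delta$-th or $6$-th root of unity, depending on which spectral piece it comes from) and that such elements are central in $\End(E)\otimes\QQ$, so conjugation by them is trivial and every $\QQ$-point is invariant. Matching $\ZZ$-ranks then gives $\rk\MW(W_F)=2s_4$ (resp.\ $2(s_3+s_6)$) as $\ZZ$-modules, equivalently $s_4$ (resp.\ $s_3+s_6$) in the $\ZZ[\ii]$- (resp.\ $\ZZ[\omega_6]$-) module normalization used in the statement.

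The main obstacle, I expect, is the bookkeeping around isogeny versus isomorphism and the normalization of ``rank''. In the $\delta=6$ cuspidal case the Albanese was literally $E_0^q$, but here two \emph{different} $\delta$-essential singularity types can contribute eigenvalues of different orders (say a cube root and a primitive sixth root), and Roan's theorem is being applied to group together the $X_i$'s by the order of the eigenvalue; one has to be careful that the CM-type hypothesis $\#\Phi_\alpha=\varphi(d)/2$ in Theorem~\ref{roanth} is verified on each graded piece — this is where Corollary~\ref{oneroot} (``at most one eigenvalue of order $\delta'$ on $\Gr^0_F\Gr^W_1$ for simple singularities'') does the real work, since it guarantees each isotypic component is one-dimensional over the relevant cyclotomic field and hence splits into elliptic factors with the right CM. The inequality direction for $\delta=4,6$ is easy; it is the sharpening to equality under $\Delta_{\cC,\e}(-1)\ne 0$, together with making sure no \emph{additional} $\mu_\delta$-invariant sections sneak in from the $A=0$ case, that requires the most care.
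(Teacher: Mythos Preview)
Your approach is essentially the same as the paper's: identify $\MW(W_F)$ with the $\mu_i$-invariant part of the Mordell-Weil group of the split threefold over the cyclic extension, then feed in the Albanese decomposition of Theorem~\ref{thm-main-alb} via $\Mor(V_\delta,E)=\Hom(\Alb(V_\delta),E)$. Your treatment is in fact more careful than the paper's own proof, which is a three-sentence sketch that only verifies the inequalities and does not spell out the argument for the equality cases at all; your use of $\Delta_{\cC,\e}(-1)\ne 0\Rightarrow s_2=0\Rightarrow A=0$, together with the observation that isogeny preserves $\Hom$-ranks and that the $\mu_\delta$-action factors through the (commutative) endomorphism ring of $E$, is exactly what is needed to fill that gap. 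Your flag about the $\ZZ$- versus $\ZZ[\omega]$-rank normalization is also well placed: the paper is not entirely consistent on this point (compare the factor of $2$ between Theorem~\ref{irregularityMW} and the bounds in Theorem~\ref{thm-bound-delta}), and one must read ``$\rk$'' here as $\ZZ[\omega]$-rank for the equalities to be literally correct. One small caution: for $\delta=3$ the paper still takes $i=6$ as the degree of the splitting extension (since $u^2+v^3=F$ only splits over $\CC(x,y)(F^{1/6})$, not over $\CC(V_3)$), so in that case the relevant cyclic cover in step~(1) is $V_6$ rather than $V_3$; the Albanese input from Theorem~\ref{thm-main-alb} then enters through the identification of the $\omega_3$-eigenspace inside $\Alb(V_6)$, not directly as $\Alb(V_3)$.
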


\begin{proof} As in the proof of Theorem~\ref{irregularityMW} one has 
the identification of $\MW(W_F)$ and $\mu_i$-invariant elements
of the Mordell-Weil group 
of $W_F$ over the extension $\mu_i(\CC(x,y))$ 
of $\CC(x,y)$ with the Galois group $\mu_i$,
where $i=6$ for $\delta=3,6$ and $i=4$ for $\delta=4$.
The threefold $u^2+v^3=F(x,y,1)v$ splits over the field 
$\CC(x,y)(F^{1 \over 4})$ since it is isomorphic to a
direct product threefold using $v'=vF^{1 \over 2}$, $u'=F^{3 \over 4}u$.
The inequalities follow from:
$\rk\MW(W_F,\CC(x,y)(F^{ 1\over {\delta}})) \ge \rk \Hom (E_k^s,E_k)$, 
where $k=0,s=s_3$ for $\delta=3$, $k=0,s=s_3+s_6$ for 
$\delta=6$ and $k=1728, j=s_4$ for $\delta=4$.
\end{proof}

\subsection{Statement and proof of the main theorem for $\delta$-curves}

\begin{theorem} 
\label{thm-bound-delta}
Let $\cC=\{F=0\}$ be a $\delta$-partial curve for $\delta=3,4,$ or $6$, then:
\begin{enumerate}
\item\label{thm-bound-delta-1}
There is a one-to-one correspondence between quasi-toric relations
corresponding to $F$ and the $\CC(x,y)$-points of the threefold $W_F$.
\end{enumerate}
If, in addition, the singularities of $\cC$ are as in Theorem~\ref{thm-main-alb} then:
\begin{enumerate}
\item[(2)]
The multiplicities of the factors of the global Alexander polynomial of
$\delta$-partial curves satisfy the following inequalities:
\begin{equation}
\array{ll}
s_3 \leq {5 \over 6} d-1 & \text{ if } \delta =3,\\
s_4 \leq {5 \over 6} d-1 & \text{ if } \delta =4,\\
s_3+s_6 \leq {5 \over 6} d-1 & \text{ if } \delta =6.\\
\endarray
\end{equation}
\end{enumerate}
\end{theorem}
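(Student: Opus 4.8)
For part~(1) the plan is to reproduce, essentially verbatim, the dictionary of Section~\ref{sec-qt-rels} between rational sections of an elliptic threefold and quasi-toric relations of $F$. First I would fix a weighted-projective biregular model of $W_F$: for $\delta=3,6$ this is the hypersurface $(u^3+v^2)z^{m}=F(x,y,z)$ in $\PP(2,3,1,1,1)$ of Lemma~\ref{equationforw}, with $m$ chosen to make the equation weighted-homogeneous, and for $\delta=4$ it is the analogous model realising the $j=1728$ Weierstrass form (obtained from $u^2+v^3=Fv$) inside a weighted projective space $\PP(3n,2n,1,1,1)$. Next I would take the projection onto $\PP^2$ centred at the vertical locus $\{x=y=z=0\}$ exactly as in~\eqref{eq-proj}, write a rational section in the normalised form $s=(f,g,x\tilde h,y\tilde h,z\tilde h)$ with the degree constraints forced by the weights, substitute it into the defining equation, and use that $\CC[x,y,z]$ is a unique factorisation domain — together with the freedom to take $F$ prime to $z$ — to cancel the spurious power of $z$. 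This yields a quasi-toric relation of $F$ of type $(2,3,6)$, $(3,3,3)$ or $(2,4,4)$ according to $\delta$; conversely every such relation produces a section. Finally I would observe that the only remaining ambiguity, ``rescale $\tilde h$'', is precisely the equivalence of quasi-toric relations defined after~\eqref{eq-map}, so that the bijection descends to equivalence classes of relations versus $\CC(x,y)$-points of $W_F$. Nothing essentially new happens here: it is Section~\ref{sec-qt-rels} reread with $(2,3,6)$ replaced by the orbifold datum attached to $\delta$.

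For part~(2) the plan combines two ingredients. The first is Theorem~\ref{thm-main-delta}, which gives $s_3$, $s_4$ and $s_3+s_6\le\rk\MW(W_F)$ in the three cases (with equality when $\delta=3$), the rank being taken over the CM order $R=\End(E_k)$ attached to $W_F$ ($R=\ZZ[\omega_6]$ when $j=0$, $R=\ZZ[\ii]$ when $j=1728$); since $R$ has $\ZZ$-rank two, $\MW(W_F)\otimes\QQ$ is a vector space over the imaginary quadratic field $R\otimes\QQ$, so the $\ZZ$-rank of $\MW(W_F)$ is $2\,\rk\MW(W_F)$. The second ingredient is the bound on this $\ZZ$-rank obtained exactly as in Theorem~\ref{boundcusps}: restricting $\pi\colon W_F\to\PP^2$ to a generic line $\ell$ gives an injection $\MW(W_F)\hookrightarrow\MW(\pi^{-1}(\ell))$, and for the elliptic surface $\pi^{-1}(\ell)$ the Shioda--Tate formula and Noether's formula give $\rk\MW(\pi^{-1}(\ell))=\rho-2\le h^{1,1}-2=10\,\chi(\pi^{-1}(\ell))-2$, where $\chi$ is read off from the singular fibres — the $d=\deg\cC$ transversal points $\ell\cap\cC$ each contribute a cuspidal cubic fibre (Kodaira type $II$). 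When $6\mid d$ (resp.\ $4\mid d$), which by Proposition~\ref{prop-divisibility} holds as soon as the relevant $s_\bullet$ is positive, one gets $\chi(\pi^{-1}(\ell))=\tfrac d6$ (resp.\ $\tfrac d4$), hence $\rk\MW(W_F)\le\tfrac53 d-2$ over $\ZZ$; dividing by $2$ gives $s_\bullet\le\tfrac56 d-1$.

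The hard part will be making the constant sharp in the $j=1728$ case $\delta=4$, where the naive count above gives $\chi(\pi^{-1}(\ell))=\tfrac d4$ and hence only $s_4\le\tfrac54 d-1$, weaker than the asserted $\tfrac56 d-1$. To recover the correct value I would exploit the order-four deck automorphism of $\pi^{-1}(\ell)$: the image of $\MW(W_F)$ lands in the sublattice of $\mathrm{NS}(\pi^{-1}(\ell))$ on which this automorphism acts by a fixed non-real eigenvalue, and bounding the dimension of the corresponding eigenspace of $H^{1,1}(\pi^{-1}(\ell))$ — equivalently, of the relevant Hodge piece of the restricted surface — rather than all of $h^{1,1}$ should produce the stated bound. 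A similar but milder point occurs for $\delta=3$ when $d\equiv3\pmod6$, where the restricted Weierstrass fibration acquires an extra degenerate fibre at infinity; there one also uses the classification of $\delta$-essential singularities (Tables~\ref{tab:d-ess}--\ref{tab:d-3-ess}) and the divisibility $\delta\mid d$ of Proposition~\ref{prop-divisibility} to absorb the residual contribution. Everything else — writing out the projections, the unique-factorisation manipulation, and the fibre-by-fibre Euler-number count — is routine.
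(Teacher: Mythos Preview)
Your approach is essentially the one the paper uses. For part~(1) the paper's proof is implicit --- it simply relies on the Section~\ref{sec-qt-rels} dictionary between rational sections and quasi-toric relations, exactly as you describe. For part~(2) the paper likewise restricts to a generic line, invokes the injection $\MW(W_F)\hookrightarrow\MW(\pi^{-1}(\ell))$, and applies the Shioda--Tate and Noether formulas just as in Theorem~\ref{boundcusps}; the factor of two you extract from the CM order is what converts the bound $\tfrac53 d-2$ on the $\ZZ$-rank into the stated $\tfrac56 d-1$ on the~$s_\bullet$.

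The one point where you go beyond the paper is your treatment of $\delta=4$. The paper's proof is a single sentence asserting that for \emph{each} $\delta$ the restricted elliptic surface has ``$d$ degenerate fibers each isomorphic to a cubic curve with a single cusp'' and then quotes the identical numerics from Theorem~\ref{boundcusps}. It does not perform the eigenspace analysis you propose, nor does it distinguish the Kodaira types of the fibres in the $j=1728$ case. Your observation that the Weierstrass model $u^2=v^3-Fv$ restricted to a generic line has type~$III$ (not type~$II$) fibres at the zeros of $F$, hence $\chi=\tfrac d4$ rather than $\tfrac d6$, is correct and is precisely the point the paper glosses over; the paper simply asserts the same fibre type uniformly. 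So while your overall strategy coincides with the paper's, you are being more scrupulous than the published proof about the $\delta=4$ constant --- the paper does not supply the refinement you are looking for, and your eigenspace idea would be an addition to, rather than a reconstruction of, its argument. Your remark about an extra fibre at infinity when $\delta=3$ and $d\equiv 3\pmod 6$ is likewise a genuine detail the paper omits.
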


\begin{proof} The argument is the same as in the proof of 
Theorem~\ref{boundcusps} since the rank of the Mordell-Weil group
of the threefolds for each $\delta$ is bounded by the 
rank of Mordell-Weil group of the elliptic surface with $d$ 
degenerate fibers each isomorphic to a cubic curve with a single cusp
as follows from equations (\ref{threefold3}). 
For each $\delta$ we obtain a bound on the rank of the submodule 
of the Alexander module corresponding to the action of the deck transformation 
on the subspaces generated by the eigenvalues which are roots 
of unity of degrees $3,4$ and $6$.
\end{proof}

As an immediate consequence of Theorem~\ref{thm-bound-delta}\eqref{thm-bound-delta-1} 
one has the following generalization of Theorem~\ref{cor-kulikov}:

\begin{theorem}
\label{cor-kulikov-gen}
Let $\cC=\{F=0\}$ be a curve, then the following statements are equivalent:
\begin{enumerate}
 \item\label{cor-kulikov-gen-1}
$\cC$ admits a quasi-toric relation of elliptic type $(3,3,3)$ (resp. $(2,4,4)$, or $(2,3,6)$), 
 \item\label{cor-kulikov-gen-2} 
$\cC$ admits an infinite number of quasi-toric relations of elliptic type 
$(3,3,3)$ (resp. $(2,4,4)$, or $(2,3,6)$). 
\end{enumerate} 
Also,~\eqref{cor-kulikov-gen-1} and~\eqref{cor-kulikov-gen-2} above imply 
\begin{enumerate} 
\setcounter{enumi}{2} 
 \item\label{cor-kulikov-gen-3} 
$\cC$ is a $\delta$-partial curve ($\varphi_\delta(t) \mid \Delta_{\cC,\e}(t)$) 
for $\delta=3$ (resp. $4$, or $6$). 
\end{enumerate} 
 
Moreover, if the singularities of $\cC$ are as in Theorem~\ref{thm-main-alb},
then~\eqref{cor-kulikov-gen-1}, \eqref{cor-kulikov-gen-2}, and~\eqref{cor-kulikov-gen-3} are equivalent. 
\end{theorem}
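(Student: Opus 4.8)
The plan is to establish the three implications of the theorem separately, treating the three elliptic types $(3,3,3)$, $(2,4,4)$ and $(2,3,6)$ in parallel through $\delta:=\lcm(p,q,r)\in\{3,4,6\}$, the associated elliptic curve $\hat S_{(p,q,r)}$ of Remark~\ref{rem-elliptic} (equal to $E_0$ for $(3,3,3)$ and $(2,3,6)$, and to $E_{1728}$ for $(2,4,4)$), and the threefold $W_F$ of~\eqref{threefold3}. Throughout, ``quasi-toric relation'' will mean a non-trivial one, i.e.\ one whose associated pencil $[\,h_1^pF_1:h_2^qF_2\,]$ is non-constant, equivalently one representing a non-zero element of the relevant Mordell--Weil group; and I read statement~\eqref{cor-kulikov-gen-3} as the displayed divisibility $\varphi_\delta(t)\mid\Delta_{\cC,\e}(t)$, with $\e$ the surjection determined by the multiplicities of the relation.

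The equivalence of~\eqref{cor-kulikov-gen-1} and~\eqref{cor-kulikov-gen-2} reduces to the implication \eqref{cor-kulikov-gen-1}$\Rightarrow$\eqref{cor-kulikov-gen-2}, which I would obtain from the group structure on quasi-toric relations. Via the correspondence with the $\CC(x,y)$-points of $W_F$ given by Theorem~\ref{thm-bound-delta}\eqref{thm-bound-delta-1} (or, unconditionally, from explicit section formulas as in Section~\ref{sec-qt-rels}), compatible with the construction~\eqref{eq-map}, the set of quasi-toric relations of a fixed elliptic type attached to $F$ is a finitely generated torsion-free module over the order $\End(\hat S_{(p,q,r)})$ in an imaginary quadratic field; hence a single non-zero element generates an infinite family. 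Concretely, the post-compositions of~\eqref{eq-map} with the multiplication maps $[n]\colon\hat S\to\hat S$ ($n\ge1$) are pairwise distinct and again come from quasi-toric relations of the same type.

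For the implication \eqref{cor-kulikov-gen-1}$\Rightarrow$\eqref{cor-kulikov-gen-3}, a quasi-toric relation of type $(p,q,r)$ produces by~\eqref{eq-map} a surjective morphism $\phi\colon V_\delta\to\hat S_{(p,q,r)}$ which is equivariant for the order-$\delta$ automorphisms on source and target. Pull-back of holomorphic $1$-forms is then an equivariant injection $\phi^*\colon H^0(\hat S,\Omega^1)\hookrightarrow H^0(V_\delta,\Omega^1)$, and since on the genus-one curve $\hat S_{(p,q,r)}$ the order-$\delta$ automorphism acts on the one-dimensional space $H^0(\hat S,\Omega^1)$ by a \emph{primitive} $\delta$-th root of unity, a primitive $\delta$-th root of unity occurs among the eigenvalues of the deck transformation of $V_\delta$ acting on $H^1(V_\delta,\CC)$. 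By the relation between the deck-transformation action on $H^1(V_\delta)$ and the Alexander polynomial of $\cC$ (\cite{Duke}; cf.\ Lemma~\ref{deltaessentialalexander}, where branching affects only the eigenvalue $1$), this root divides $\Delta_{\cC,\e}(t)$, and therefore $\varphi_\delta(t)\mid\Delta_{\cC,\e}(t)$ since $\Delta_{\cC,\e}\in\QQ[t]$; this is~\eqref{cor-kulikov-gen-3}. (This is the Albanese counterpart of the orbifold-pencil argument sketched after~\eqref{eq-qtoric}: alternatively one may pass to the Stein factorization of the pencil and, using Proposition~\ref{prop-bound-pencil} to control its base as in the proof of Proposition~\ref{prop-primitive}, obtain an epimorphism onto $\pi_1^{\mathrm{orb}}(\PP^1_{p,q,r})$ and read the cyclotomic factor off from its character variety.)

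Finally, for \eqref{cor-kulikov-gen-3}$\Rightarrow$\eqref{cor-kulikov-gen-1} under the hypotheses of Theorem~\ref{thm-main-alb}: if $\varphi_\delta(t)\mid\Delta_{\cC,\e}(t)$ then $s_\delta\ge1$ (and in particular $s_3+s_6\ge1$ when $\delta=6$), so Theorem~\ref{thm-main-delta} gives $\rk\MW(W_F)\ge1$; hence $W_F$ has a non-trivial $\CC(x,y)$-point, which by Theorem~\ref{thm-bound-delta}\eqref{thm-bound-delta-1} is a non-trivial quasi-toric relation of the corresponding elliptic type, which proves~\eqref{cor-kulikov-gen-1}, and then~\eqref{cor-kulikov-gen-2} by the first step. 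I expect the main obstacle to be the implication \eqref{cor-kulikov-gen-1}$\Rightarrow$\eqref{cor-kulikov-gen-3}: one must ensure that the morphism extracted from the relation is genuinely non-trivial and ``of type $(p,q,r)$'', so that the cyclotomic factor obtained is precisely $\varphi_\delta$ and not $\varphi_{\delta'}$ for some proper multiple $\delta'$ of $\delta$ (which would be coprime to $\varphi_\delta$ in $\QQ[t]$). In the Albanese formulation this comes down to the clean fact that the order-$\delta$ automorphism acts on $H^0(\hat S_{(p,q,r)},\Omega^1)$ by a primitive $\delta$-th root of unity; in the orbifold-pencil formulation it is the verification that Stein factorization does not enlarge the orbifold multiplicities beyond $(p,q,r)$ — the analogue, now for the non-coprime multiplicities of $(3,3,3)$ and $(2,4,4)$, of the bookkeeping carried out for $(2,3,6)$ in Proposition~\ref{prop-primitive}.
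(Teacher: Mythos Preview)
Your proof is correct and follows essentially the same architecture as the paper's: the equivalence \eqref{cor-kulikov-gen-1}$\Leftrightarrow$\eqref{cor-kulikov-gen-2} via the group structure (Theorem~\ref{thm-bound-delta}\eqref{thm-bound-delta-1}), and \eqref{cor-kulikov-gen-3}$\Rightarrow$\eqref{cor-kulikov-gen-1} via Theorem~\ref{thm-main-delta} giving $\rk\MW(W_F)>0$, are exactly the paper's arguments. For \eqref{cor-kulikov-gen-1}$\Rightarrow$\eqref{cor-kulikov-gen-3} the paper takes as primary the orbifold-pencil route (the induced map $\PP^2\setminus\cC\to\PP^1_{\bar m}$ and the discussion after~\eqref{eq-qtoric}), whereas you lead with the Hodge-theoretic argument on $V_\delta$ (equivariant pullback of $H^0(\hat S,\Omega^1)$) and mention the orbifold route only as an alternative; since you note both, this is a difference of emphasis rather than substance, and your version has the virtue of making explicit why the cyclotomic factor obtained is exactly $\varphi_\delta$.
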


\begin{proof}
First of all note that this result generalizes Theorem~\ref{cor-kulikov} since 
curves with only nodes and cusps as singularities and non-trivial Alexander polynomial
automatically satisfy that $\Delta_{\cC,\e}(\omega_6)= 0$ for a primitive 6th-root
of unity, and hence they are $\delta$-partial curves (in fact, they are $\delta$-total).

Statement~\eqref{cor-kulikov-gen-1} $\Leftrightarrow$ \eqref{cor-kulikov-gen-2} 
follows from the group structure of quasi-toric relations as exhibited in
Theorem~\ref{thm-bound-delta}\eqref{thm-bound-delta-1}.
Also~\eqref{cor-kulikov-gen-1} $\Rightarrow$ \eqref{cor-kulikov-gen-3} is immediate 
since a quasi-toric relation of $F$ induces an equivariant map 
$V_{\delta}\to E_\delta$, which induces a map $\PP^2\setminus \cC \to \PP^1_{\bar m}$,
where $\bar m=(3,3,3)$, $(2,4,4)$, resp. $(2,3,6)$ according to $\delta=3,4$, resp. $6$. 
This implies that $\Delta_{\cC,\e}(t)\neq 0$, where $\e$ is defined as in~\eqref{eq-e-qt}. 
 
Finally, \eqref{cor-kulikov-gen-3} $\Rightarrow$ \eqref{cor-kulikov-gen-1} under the conditions
of Theorem~\ref{thm-main-alb} is an immediate consequence of Theorem~\ref{thm-main-delta},
since~\eqref{cor-kulikov-gen-3} implies $\rk \MW(W_F)>0$. 
\end{proof}
 
\begin{remark} 
\label{rem-6l} 
Note that~\eqref{cor-kulikov-gen-3} $\Rightarrow$ \eqref{cor-kulikov-gen-1} in
Theorem~\ref{cor-kulikov-gen} is not true for general singularities as the following example
shows. Consider $\cC$ the curve given by the product of 6 concurrent lines. Generically, $\cC$
does not satisfy a quasi-toric relation of type $(2,3,6)$ or $(3,3,3)$ 
(cf. \ref{sixlines}), but its Alexander
polynomial is not trivial, namely, $\Delta_\cC(t)=(t-1)^5(t+1)^4(t^2+t+1)^4(t^2-t+1)^4$. 
\end{remark} 
 
\subsection{Applications}

The Theorem~\ref{thm-main-delta} has implications for the structure of
the characteristic variety $\Sigma(\cC)$ of a plane curve $\cC$. Characteristic 
varieties of curves extend the notion of Alexander polynomials of curves to 
non-irreducible curves (for a definition see~\cite{charvar}). 
In~\cite[Theorem 1.6]{arapura} (resp.~\cite[Theorem 1]{ACM-quasi-projective}), 
structure theorems for the irreducible components of $\Sigma(\cC)$ are given in 
terms of the existence of maps from $\PP^2\setminus \cC$ onto Riemann surfaces 
(resp. orbifold surfaces). The following result 
sharpens~\cite[Theorem 1]{ACM-quasi-projective} for the special case of torsion 
points of order $\delta=3,4,6$ on~$\Sigma(\cC)$.

\begin{cor}
Let $\cC$ be a curve whose singularities are as in Theorem~\ref{thm-main-alb},
$X=\PP^2\setminus \cC$, and consider $\Sigma_1(X)$ the 
first characteristic variety of $X$. For any $\rho\in \Sigma_1(X)$ torsion point 
$\rho$ of order $\delta$ of $\Sigma_1(X)$ there exists an admissible orbifold 
map $f:\PP^2\to S_{\bar m}$ such that $\rho\in f^*\Sigma_1(S_{\bar m})$.
\end{cor}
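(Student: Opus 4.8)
The plan is to rerun the machinery of Sections~\ref{sec-mordellweil} and~\ref{sec-delta-curves} with the distinguished homomorphism $\e$ replaced by the torsion character $\rho$: one manufactures from $\rho$ a dominant map from a cyclic cover onto an elliptic curve of type $E_0$ or $E_{1728}$, descends it to an orbifold morphism, and checks that $\rho$ lies in the pull-back of the characteristic variety of the target. First I would observe that a torsion character $\rho\colon H_1(X)\to\CC^*$ of order $\delta$ defines a cyclic $\delta$-cover $\pi_\rho\colon X_\rho\to X$ with Galois group $\mu_\delta$, and that $\rho\in\Sigma_1(X)$ is equivalent to the nonvanishing of the $\rho$-isotypic summand $H^1(X_\rho,\CC)_\rho$. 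Let $Y_\rho$ be a smooth $\mu_\delta$-equivariant projective model of the corresponding cyclic cover of $\PP^2$ branched along the subcurve $\cC_\rho\subseteq\cC$ cut out by the components on which $\rho$ is nontrivial, with branching indices dictated by $\rho$. Running the argument of Lemma~\ref{albaneseeigenvalues} essentially verbatim --- the surjection from a tubular neighbourhood of $\cC_\rho$, the Mayer--Vietoris splitting of its cyclic cover into the covers of the local singularities and of the $\CC^*$-bundle over the smooth locus, and the mixed Hodge comparison of Lemma~\ref{comparemhs} --- one sees that every eigenvalue of order $\delta'\mid\delta$, $\delta'>2$, of the deck transformation on $\Gr^0_F\Gr^W_1 H^1(Y_\rho)$ is an eigenvalue of the monodromy on $\Gr^0_F\Gr^W_1$ of the Milnor fibre of one of the $\delta$-essential $\AAA$- or $\DD$-singularities lying on $\cC_\rho$. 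By Corollary~\ref{oneroot} each such singularity contributes at most one such eigenvalue, and they all contribute to the same primitive $\delta'$-th root of unity; hence, applying Roan's Decomposition Theorem exactly as in the proof of Theorem~\ref{thm-main-alb}, the smallest $\mu_\delta$-stable abelian subvariety $B\subseteq\Alb(Y_\rho)$ carrying the characters $\rho$ and $\bar\rho$ is isogenous to a power of $E_0$ (if $\delta=3,6$) or of $E_{1728}$ (if $\delta=4$), with a chosen generator of $\mu_\delta$ acting as multiplication by a primitive $\delta$-th root of unity.

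Next I would compose the Albanese map $Y_\rho\to\Alb(Y_\rho)$ with the projection onto a simple factor $E\in\{E_0,E_{1728}\}$ of $B$; since $\End(E_0)=\ZZ[\omega_6]$ and $\End(E_{1728})=\ZZ[\ii]$ contain units realising the relevant roots of unity, this can be normalised to a dominant $\mu_\delta$-equivariant morphism $\psi\colon Y_\rho\to E$ under which the chosen generator of $\mu_\delta$ acts as multiplication by a primitive $\delta$-th root of unity. Passing to $\mu_\delta$-quotients, and using $Y_\rho/\mu_\delta=\PP^2$ together with $E/\mu_\delta=\PP^1_{\bar m}$ for $\bar m=(3,3,3),(2,4,4),(2,3,6)$ according as $\delta=3,4,6$ (Remark~\ref{rem-elliptic} and the discussion of the map~\eqref{eq-map} in Section~\ref{quasitoricsection}), one obtains a rational map $\PP^2\dashrightarrow\PP^1_{\bar m}$ whose restriction to $X$ is an orbifold morphism $f_0\colon X\to\PP^1_{\bar m}$: its three special fibres have the shape $h_1^pF_1,h_2^qF_2,h_3^rF_3$, the orbifold pattern produced by a quasi-toric relation recalled after Proposition~\ref{prop-orb}. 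Taking the Stein factorisation $X\to S_{\bar m'}\to\PP^1_{\bar m}$, exactly as in the proof of Proposition~\ref{prop-primitive}, turns $f_0$ into an admissible orbifold morphism $f\colon X\to S_{\bar m'}$; by Proposition~\ref{prop-bound-pencil} the associated primitive pencil of plane curves has at most two multiple members, so $S_{\bar m'}$ is again a rational orbifold curve of elliptic type (or has fewer orbifold points, in which case $\pi_1^{\mathrm{orb}}(S_{\bar m'})$ is still infinite and the computation below is unaffected).

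Finally I would check that $\rho\in f^*\Sigma_1(S_{\bar m'})$: by construction $\mu_\delta$ acts on the line $\Gr^0_F(H^1(Y_\rho))_\rho=\psi^*H^{1,0}(E)$ through $\rho$, and $\psi$ descends, after the $\mu_\delta$-quotient and the Stein factorisation, to $f$; tracking characters gives $\rho=f^*\tau$, where $\tau\in\widehat{H_1^{\mathrm{orb}}(S_{\bar m'})}$ is the character by which the orbifold deck group acts, and $\tau$ is a primitive $\delta$-th root of unity, hence lies in $\Sigma_1(S_{\bar m'})=\Char(\pi_1^{\mathrm{orb}}(S_{\bar m'}))$ (e.g.\ $\{\omega_6^{\pm1}\}$ for $(2,3,6)$, computed in Section~\ref{orbifoldcurves}). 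The main obstacle is exactly the passage from the distinguished $\e$ to a general torsion $\rho$: one must verify that the branching data of $Y_\rho$ extracted from $\rho$ is compatible with the $\delta$-essential structure at every point of $\cC_\rho$, so that Corollary~\ref{oneroot} and the local Hodge input of Lemma~\ref{albaneseeigenvalues} genuinely apply to $\rho$, and then that the resulting collapse onto a single primitive $\delta$-th root forces the $\rho$-isotypic piece to be the pull-back of $H^{1,0}$ of one elliptic pencil rather than a genuine superposition of several; this is where the hypothesis on $\AAA$- and $\DD$-singularities is indispensable, since for more degenerate singularities several eigenvalues of the same order may occur and $\rho$ need not arise from any orbifold map at all (cf.\ Remark~\ref{rem-6l}).
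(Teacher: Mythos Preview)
Your approach is correct in outline but far more laborious than the paper's.  The paper's proof is essentially three lines: writing $\rho=(\omega_\delta^{\e_1},\dots,\omega_\delta^{\e_r})$ one extracts the homomorphism $\e=(\e_1,\dots,\e_r)$; then $\rho\in\Sigma_1(X)$ is equivalent to $\varphi_\delta(t)\mid\Delta_{\cC,\e}(t)$ by the identification of the Alexander polynomial with the intersection of $\Sigma_1(\cC)$ with the one-parameter torus $L_\e$ recalled in Section~\ref{defalpol}; and this places us exactly in case~\eqref{cor-kulikov-gen-3} of Theorem~\ref{cor-kulikov-gen}, whose implication \eqref{cor-kulikov-gen-3}$\Rightarrow$\eqref{cor-kulikov-gen-1} (valid under the hypotheses of Theorem~\ref{thm-main-alb}) immediately furnishes the orbifold map $f$.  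A Stein factorisation makes $f_*$ surjective, whence $f^*\Sigma_1(S_{\bar m})\subset\Sigma_1(X)$, and since $\varphi_\delta$ divides the orbifold Alexander polynomial one has $\rho\in f^*\Sigma_1(S_{\bar m})$.

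What you do instead is unpack the entire proof of Theorem~\ref{cor-kulikov-gen}: you rebuild the cyclic cover $Y_\rho$, rerun Lemma~\ref{albaneseeigenvalues} and Corollary~\ref{oneroot}, reapply Roan's decomposition, project to an elliptic factor, and descend by hand to the orbifold quotient.  All of this is already packaged in Theorems~\ref{thm-main-alb}, \ref{thm-main-delta} and~\ref{cor-kulikov-gen}, so the corollary is meant to be harvested from those results rather than reproved in parallel.  To your credit, the ``main obstacle'' you flag---that the branching data extracted from $\rho$ must still yield $\delta$-essential local structure so that the earlier machinery applies to this new $\e$---is the only genuine point to check, and the paper handles it simply by asserting that the hypotheses of Theorem~\ref{cor-kulikov-gen}\eqref{cor-kulikov-gen-3} are met.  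So your diagnosis of where the work lies is right; you just did not notice that the work has already been done.
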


\begin{proof}
Let $\rho=(\omega_{\delta}^{\e_1},\dots,\omega_{\delta}^{\e_r})\in \Sigma_1(X)$ be a
torsion point of order $\delta$ in $\Sigma_1(X)$. Note that the homomorphism 
induced by $\e:=(\e_1,\dots,\e_r)$ is such that the cyclotomic polynomial 
$\varphi_\delta(t)$ of the $\delta$-roots of unity divides $\Delta_{\cC_e,\e}(t)$. 
Hence the hypotheses of Theorem~\ref{cor-kulikov-gen}\eqref{cor-kulikov-gen-3} are 
satisfied and therefore there exists an orbifold Riemann surface $S_{\bar m}$ such that
$\varphi_\delta(t)$ divides $\Delta_{\pi^{\text{orb}}_1(S_{\bar m})}(t)$ and a 
dominant orbifold morphism $f:X\to S_{\bar m}$. After performing a Stein factorization 
we may assume that the induced homomorphism $f:X\to S_{\bar m}$ is surjective. Finally 
note that any such a surjection induces an inclusion of characteristic varieties, that 
is, $f^*\Sigma_1(S_{\bar m})\subset \Sigma_1(X)$. Moreover, since 
$\varphi_\delta(t)$ divides $\Delta_{\pi^{\text{orb}}_1(S_{\bar m})}(t)$, then 
$\rho\in f^*\Sigma_1(S_{\bar m})$.
\end{proof}

\begin{remark}
Theorem~\ref{cor-kulikov-gen} cannot be generalized to $\delta$-essential
curves outside of the range $\delta=3,4,6$. For instance, 
note that in~\cite{ACM-multiple-fibers} an example of an irreducible affine quintic 
$\cC_5\subset \CC^2$ with $2\AAA_4$ as affine singularities is shown. The curve 
$\cC_5$ is 10-total since $\Delta_{\cC_5}(t)=t^5-t^4+t^3-t^2+t-1$.
However, there is no quasi-toric decomposition of $C_5$ of type $(2,5)$. 
This seems to contradict~\cite[Theorem 1]{kulikov-albanese}.
\end{remark}

\begin{remark}
\label{rem-okas-conj} 
Since no irreducible sextic has an Alexander polynomial of the form $\Delta_{\cC}(t)=(t+1)^q,$ 
(cf.~\cite{degtyarev-alexander}), Theorem~\ref{cor-kulikov-gen} is another way to show that 
any irreducible sextic (with simple singularities) has a non-trivial
Alexander polynomial if and only if it is of torus type (note that an irreducible sextic admits a 
quasi-toric decomposition if and only if it is of torus type as shown in Example~\ref{ex-236}). 
This is a weaker version of what is now known as \emph{Oka's conjecture} 
(cf.~\cite{degtyarev-okas-conjecture-i,degtyarev-okas-conjecture-ii,Oka-Eyral-fundamental}). 
\end{remark} 
 
\begin{remark}\label{sixlines} 
Consider the case when $\cC$ is union of $6$ concurrent lines. To these $6$ lines $\ell_i=0$ 
correspond $6$ points $L_i$ in $\PP^1$ parametrizing lines in the pencil containing $\ell_i$. 
The 3-fold cover $V_{\ell_i}: z^3-\Pi \ell_i$ has as compactification surface in the weighted
projective space $\PP(2,1,1,1)$, which after a weighted blow-up has a map onto the 3-fold cyclic
cover $C_{L_i}$ of $\PP^1$ branched over the points $L_i$ and having rational curves are fibers.
Hence $\Alb(V_{\ell_i})$ is isomorphic to the Jacobian of $C_{L_i}$. If $L_i$ form a collection 
of roots of a polynomial over $\QQ$ having as the Galois group over the later the symmetric
(or alternating) group then the results of~\cite{Zarhin} show that the above
Jacobian is simple and hence does not 
have maps onto the elliptic curve $E_0$. 
 On the other hand the Jacobian of the 3-fold cover of
$\PP^1$ branched over the roots of the polynomial $z^3=x^6-1$ has $E_0$ as a factor, since $E_0$
is a quotient of the former by the involution. Note that the characteristic polynomial of the deck
transformation on $H_1(V_{\ell_i})$ is $(t^2+t+1)^4(t-1)^5$. 
\end{remark} 
 
\begin{remark} 
Note that Oka's conjecture cannot be generalized to general reducible sextics as shown in
Remark~\ref{rem-6l}. 
\end{remark} 
 
However, in light of Theorem~\ref{cor-kulikov-gen} it seems that one can ask the following version
of Oka's conjecture to non-reduced curves, namely: 
 
\begin{question} 
\label{conj-oka-gen}
Is it true that a (possibly non-reduced) sextic $\cC=\{F=0\}$ (with $\deg F=6$) whose singularities
are as in Theorem~\ref{thm-main-alb} has a non-trivial Alexander polynomial if and only if it admits a
quasi-toric relation? 
\end{question} 
 
According to Theorem~\ref{cor-kulikov-gen}, one has the following rewriting of Question~\ref{conj-oka-gen}. 
 
\begin{prop} 
The answer to Question~\ref{conj-oka-gen} is affirmative if and only if 
$\Delta_{\cC,\e}(t)\neq (t-1)^{r-1}(t+1)^q$ for any non-reduced curve $\cC$ 
of total degree $6$ whose singularities are as in Theorem~\ref{thm-main-delta}. 
\end{prop}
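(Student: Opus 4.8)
The plan is to deduce the equivalence formally from Theorem~\ref{cor-kulikov-gen}, after making explicit the shape of the Alexander polynomial of the curves involved. First I would fix a non-reduced sextic $\cC$ with $r$ irreducible components whose singularities are as in Theorem~\ref{thm-main-delta} (hence $\delta$-essential of type $\AAA$ or $\DD$ for some $\delta\in\{3,4,6\}$). Since $\deg F=6$, Proposition~\ref{prop-divisibility} applied to the classification in Tables~\ref{tab:d-ess}--\ref{tab:d-3-ess} shows that every root of $\Delta_{\cC,\e}$ is a root of unity of order dividing $6$; combined with the standard computation that $1$ occurs with multiplicity $r-1$, this gives
\[
\Delta_{\cC,\e}(t)=(t-1)^{r-1}(t+1)^{s_2}\varphi_3(t)^{s_3}\varphi_6(t)^{s_6},
\]
so in particular $s_4=0$, and ``$\Delta_{\cC,\e}$ non-trivial'' means exactly $s_2+s_3+s_6>0$.

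Next I would translate ``$\cC$ admits a quasi-toric relation'' into a divisibility condition, reading this as ``of elliptic type $(2,3,6)$, $(3,3,3)$ or $(2,4,4)$''. By the implication \eqref{cor-kulikov-gen-1}$\Rightarrow$\eqref{cor-kulikov-gen-3} of Theorem~\ref{cor-kulikov-gen} --- which holds for arbitrary curves --- such a relation forces $\varphi_6(t)$, $\varphi_3(t)$ or $\varphi_4(t)$ to divide $\Delta_{\cC,\e}(t)$; since $\varphi_4(t)\nmid\Delta_{\cC,\e}(t)$ by the previous step, this already shows ``$\cC$ admits a quasi-toric relation $\Rightarrow$ $s_3>0$ or $s_6>0$ $\Rightarrow$ $\Delta_{\cC,\e}$ non-trivial''. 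Conversely, by the equivalence \eqref{cor-kulikov-gen-3}$\Rightarrow$\eqref{cor-kulikov-gen-1} of Theorem~\ref{cor-kulikov-gen} --- this is the step that uses the hypothesis that the singularities are as in Theorem~\ref{thm-main-alb} --- if $s_3>0$ then $\cC$ carries a quasi-toric relation of type $(3,3,3)$ and if $s_6>0$ one of type $(2,3,6)$. Thus, for these curves, ``$\cC$ admits a quasi-toric relation'' is equivalent to ``$s_3>0$ or $s_6>0$''.

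Now I would assemble the equivalence. For a fixed such $\cC$, the biconditional ``$\Delta_{\cC,\e}$ non-trivial $\Leftrightarrow$ $\cC$ admits a quasi-toric relation'' always holds in the $\Leftarrow$ direction, so it holds in full precisely when ``$s_2+s_3+s_6>0$'' implies ``$s_3>0$ or $s_6>0$'', i.e.\ precisely when it is \emph{not} the case that $s_2\ge 1$ and $s_3=s_6=0$, i.e.\ precisely when $\Delta_{\cC,\e}(t)$ is not of the form $(t-1)^{r-1}(t+1)^{q}$ with $q\ge 1$. Hence Question~\ref{conj-oka-gen} has an affirmative answer for all admissible curves if and only if no non-reduced sextic with singularities as in Theorem~\ref{thm-main-delta} has Alexander polynomial of this form, which is exactly the assertion.

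The part I expect to be the main obstacle is not any computation but getting the conventions right on both sides: the normalization of $\Delta_{\cC,\e}$ for reducible and non-reduced curves (the precise power $r-1$ of $t-1$, and the understanding that $q\ge 1$ is implicit in the exceptional form $(t-1)^{r-1}(t+1)^q$, the case $q=0$ being the honestly trivial polynomial), and the fact that ``admits a quasi-toric relation'' in Question~\ref{conj-oka-gen} must be read in the elliptic-type sense so that Theorem~\ref{cor-kulikov-gen} exhausts all cases. Once these readings are fixed, the argument is nothing more than Proposition~\ref{prop-divisibility} together with Theorem~\ref{cor-kulikov-gen}.
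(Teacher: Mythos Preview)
Your proposal is correct and follows exactly the approach the paper intends: the paper gives no proof at all beyond the sentence ``According to Theorem~\ref{cor-kulikov-gen}, one has the following rewriting of Question~\ref{conj-oka-gen}'', and you have spelled out precisely this rewriting. The only minor sharpening you could make is that the fact $s_4=0$ for sextics follows already from divisibility at infinity (the relation $\lambda\mid d=6$ in Proposition~\ref{prop-divisibility}) alone, without invoking the tables; but this does not affect the argument.
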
 
 
\section{Examples}\label{sec-examples} 
The purpose of this section is to exhibit the different examples of elliptic quasi-toric relations. 
In Example~\ref{ex-6-9} (resp.~\ref{ex-4-3}) we present quasi-toric relations of type $(2,3,6)$ and
describe generators for the Mordell-Weil group of elliptic sections, which is of rank 3 (resp.~2). 
In Example~\ref{ex-12-39} we present a cuspidal curve whose Alexander polynomial has the largest
degree known to our knowledge. Finally, Examples~\ref{ex-4-4-2} (resp.~\ref{ex-3-3-3}) show
examples of quasi-toric relations of type $(2,4,4)$ (resp.~$(3,3,3)$) and Example~\ref{ex-2a2-3a5}
shows an example of a curve with both $(2,3,6)$ and $(3,3,3)$ quasi-toric relations. 
 
\begin{example} 
\label{ex-6-9} 
Consider the sextic curve $\cC_{6,9}$ with 9 cusps. One easy way to obtain equations is 
as the preimage of the conic $\cC_2:=\{x^2+y^2+z^2-2(xy+xz+yz)=0\}$ by the Kummer abelian cover
$[x:y:z]\mapsto [x^3:y^3:z^3]$. It is well known that the Alexander polynomial of $\cC_{6,9}$
is $\Delta_{\cC_{6,9}}(t)=(t^2-t+1)^3$. Note that $\cC_2$ belongs to the following pencils:
$$
\begin{matrix}
C_2=(x+y-z)^2+4xz,\\
C_2=(x+z-y)^2+4yx,\\
C_2=(y+z+x)^2-4yz.\\
\end{matrix}
$$
Therefore $\cC_6$ has the following 3 quasi-toric relations:
$$
\begin{matrix}
\gs_0 &\equiv & C_{6,9}=(x^3+y^3-z^3)^2+4(xz)^3,\\
\gs_1 &\equiv & C_{6,9}=(x^3+z^3-y^3)^2+4(yx)^3,\\
\gs_2 &\equiv & C_{6,9}=(y^3+z^3+x^3)^2-4(yz)^3.\\
\end{matrix}
$$
In addition, note that
$$
\begin{matrix}
\gs_3 \equiv \quad
C_{6,9}=
4 (x^2+y^2+z^2+xz+yz+xy)^3- \\
 -3(x^3+y^3+z^3+2(xz^2+x^2z+yz^2+xyz+x^2y+y^2z+xy^2))^2
\end{matrix}
$$
leads to another quasi-toric relation. 
\begin{figure}[h]
\begin{center}
\includegraphics[scale=.3]{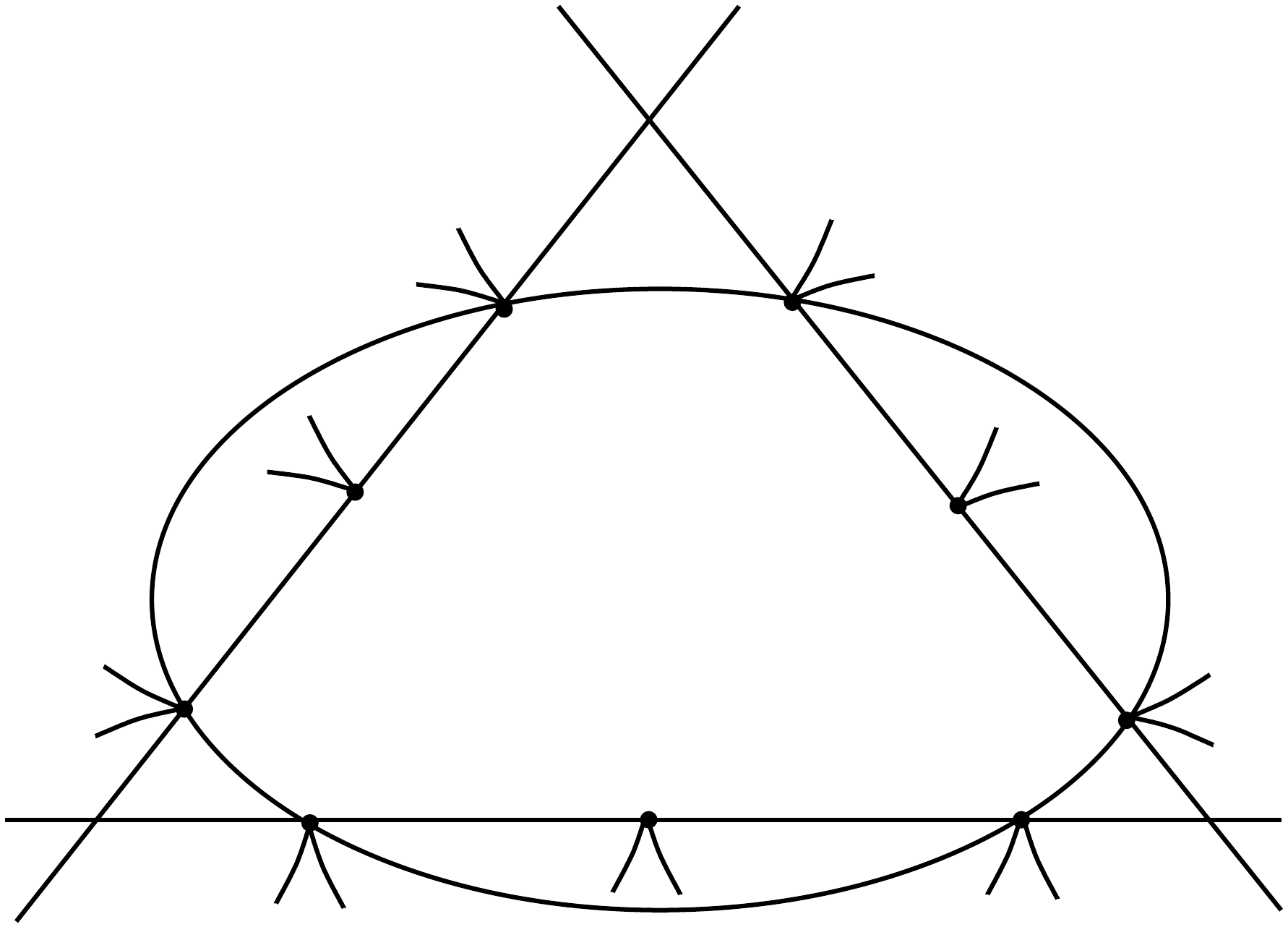}
\end{center}
\end{figure} 
 
If we consider $\gs_i=(g_{2,i},g_{3,i})$ as elements of the elliptic curve $E_0=\{u^3+v^2=F(x,y)\}$ over 
$\CC(x,y)$, then $\ZZ[\omega_6] \gs_1 \oplus \ZZ[\omega_6] \gs_2 \oplus \ZZ[\omega_6] \gs_3$ is the group
of all quasi-toric relations of $\cC_{6,9}$. For instance,
\begin{equation}
\label{eq-rel-c69}
\gs_0=-\gs_1-\gs_2+(2\omega_6-1) \gs_3=\gs_{[1,1,1-2\omega_6]}.
\end{equation}
One can obtain the above relation using~\eqref{eq-action},~\eqref{eq-double}, 
and~\eqref{eq-sum}.

This example was first considered in this context by 
Tokunaga~\cite[Theorem 0.2]{tokunaga-albanese}. 
Note that the author exhibits 12 primitive quasi-toric relations of $\cC_{6,9}$ such that $h=1$. 
Whether or not those decompositions are the only ones satisfying $h=1$ remains open.
\end{example}

\begin{example} 
\label{ex-4-3} 
Consider the tricuspidal quartic $\cC_{4,3}:=\{C_{4,3}=x^2y^2+y^2z^2+z^2x^2-2xyz(x+y+z)=0\}$. 
Since $\cC_{4,3}$ is the dual of a nodal cubic, it should contain a bitangent, which is 
the dual of the node. In our case one can check that $\ell_0:=\{L_0=x+y+z=0\}$
is the bitangent at $P:=[1:\omega_3:\omega_3^2]$ and $Q:=[1:\omega_3^2:\omega_3]$, where 
$\omega_3$ is a root of $t^2+t+1$.

Note that $C_{4,3}L_0^2$ is a non-reduced sextic whose singularities
are $6$-essential (see Remark~\ref{rem-6ess}). Its twisted 
Alexander polynomial w.r.t. the multiplicities $(1,2)$ is given by $(t^3-t+1)^2$. 

By Theorem~\ref{thm-main-delta}, the group of quasi-toric relations has $\ZZ[\omega_6]$-rank one.
In fact, it is generated by the following:
$$
\begin{matrix}
\gs_1 &\equiv & 
C_{4,3}L_0^2=4 C_2^3 + C_3^2\\
\gs_2 &\equiv & 
C_{4,3}L_0^2=4 \tilde C_2^3 + \tilde C_3^2,
\end{matrix}
$$
where
$C_2:=zx+\omega_3 yz-(1+\omega_3)xy$, $C_3:=(x^2y-x^2z-y^2x-3(1+2\omega_3)xyz+y^2z+z^2x-yz^2)$,
$\tilde C_2(x,y,z):=C_2(x,z,y)$, and $\tilde C_3(x,y,z):=C_3(x,z,y)$.

In addition we show some interesting primitive quasi-toric relations coming from 
the combinations of the generators 
$\gs_{[\omega_6^i,\omega_6^j]}:=\omega_6^i \gs_1 + \omega_6^j \gs_2$. 
Such primitive quasi-toric relations have the following form:
$$
\array{lllll}
\gs_{[1,1]} &\equiv & 
C_{4,3}L_0^2x^6& =& 4 C_{4,[1,1]}^3 + C_{6,[1,1]}^2\\
\gs_{[1,\omega_6]} &\equiv & 
C_{4,3}L_0^2(x-z)^6& =& 4 C_{4,[1,\omega_6]}^3 + C_{6,[1,\omega_6]}^2\\
\gs_{[1,\omega_6^2]} &\equiv & 
C_{4,3}L_0^2z^6& =& 4 C_{4,[1,\omega_6^2]}^3 + C_{6,[1,\omega_6^2]}^2\\
\gs_{[1,\omega_6^3]} &\equiv & 
C_{4,3}L_0^2(z-y)^6& =& 4 C_{4,[1,\omega_6^3]}^3 + C_{6,[1,\omega_6^3]}^2\\
\gs_{[1,\omega_6^4]} &\equiv & 
C_{4,3}L_0^2y^6& =& 4 C_{4,[1,\omega_6^4]}^3 + C_{6,[1,\omega_6^4]}^2\\
\gs_{[1,\omega_6^5]} &\equiv & 
C_{4,3}L_0^2(x-y)^6& =& 4 C_{4,[1,\omega_6^5]}^3 + C_{6,[1,\omega_6^5]}^2,
\endarray
$$
where $C_{k,[\omega_6^i,\omega_6^j]}$ denotes a homogeneous polynomial of degree $k$
corresponding
to the quasi-toric relation $\gs_{[\omega_6^i,\omega_6^j]}$. For simplicity 
we only show the first pair of polynomials, $C_{4,[1,1]}$ and $C_{6,[1,1]}$, respectively.
$$
{\small{
\begin{matrix}
\frac{\omega_6}{3}(x^4+z^2y^2+x^2z^2-2xyz^2-2xy^2z+x^3z-2x^2yz+x^3y+x^2y^2)\\
\\
(x^4+x^3y+x^3z-2x^2z^2+4x^2yz-2x^2y^2+4xy^2z+4xyz^2-2z^2y^2)(2x^2+xy-yz+zx)
\end{matrix}
}}
$$
The equations above can easily be obtained using~\eqref{eq-action}, 
\eqref{eq-double}, and~\eqref{eq-sum}.
\end{example}

\begin{example}
\label{ex-12-39} 
Consider the tricuspidal quartic $\cC_{4,3}$ as above, the bitangent $\ell_0$, and two tangent lines, say 
$\ell_1:=\{-8x+y+z=0\}$ (at $R=[1,4,4]$), and $\ell_2:=\{x-8y+z=0\}$ (at $S=[4,1,4]$).
The Kummer cover $[x:y:z]\mapsto [\ell_0^3:\ell_1^3:\ell_2^3]$ produces a curve $\cC_{12,39}$ of degree 12 with 39 cusps.

We compute the superabundance of $\cC_{12,39}$ as follows. Let $\mathcal J$ be the ideal sheaf supported on the 
39 cusps and such that $\mathcal J_\kappa=\mathfrak m$ the maximal ideal at any $\kappa$ cusp of $\cC_{12,39}$.
The superabundance of $\cC_{12,39}$ is $h^1(\mathcal J(7))$ and it coincides with the multiplicity of the 6-th root of
unity as a root of the Alexander polynomial $\Delta_{\cC_{12,39}}(t)$ of $\cC_{12,39}$.

Note that $\chi(\mathcal O(7))-\chi(\mathcal O/\mathcal J)=\chi(\mathcal J(7))=h^1(\mathcal J(7))-h^0(\mathcal J(7))$.
Since $\chi(\mathcal O(7))= {{7+2}\choose{2}} =36$, $\chi(\mathcal O/\mathcal J)=\# \kappa=39$, and $h^0(\mathcal J(7))=1$ 
one has that $h^1(\mathcal J(7))=4$. Note that $h^0(\mathcal J(7))=1$ since the only curve of degree 7 passing through 
all cusps is $z\tilde f_2$, where $\tilde f_2$ is the preimage by the Kummer map of the conic passing through the three cusps, 
$R$, and $S$.

Therefore, $\Delta_{\cC_{12,39}}(t)=(t^2-t+1)^4$. This is, to our knowledge, the first example of a cuspidal curve whose 
Alexander polynomial has a non-trivial root with multiplicity greater than 3. 
As was discussed already in example \ref{ex-6-9} as a cuspidal curve for which the Alexander polynomial
has factors of multiplicity three one can take the dual curve of a smooth 
cubic (its fundamental group was calculated by
Zariski~\cite{zar}). For other examples cf. ~\cite{cog}.
\end{example}

\begin{example} 
\label{ex-4-4-2} 
Consider the moduli space of sextics with three singular points $P$, $Q$, $R$ 
of types $\AAA_{15}$, $\AAA_3$, and $\AAA_1$ respectively. Such a moduli space has 
been studied in~\cite{Artal-Carmona-ji-Tokunaga-sextics} and it consists of two 
connected components $\cM_1$, $\cM_2$. Both have as representatives reducible 
sextics which are the product of a quartic $\cC_4$ and a smooth conic $\cC_2$ 
intersecting at the point $P$ (of type $\AAA_{15}$) and hence $Q,R\in \cC_4$. 
There is a geometrical difference between sextics in $\cM_1$ and $\cM_2$. For 
one kind of sextics, say $\cC_6^{(1)}\in \cM_1$, the tangent line at $P$ also 
contains $Q$, whereas for the other sextics, say $\cC_6^{(2)}\in \cM_2$, it does 
not. The Alexander polynomial of both kinds is trivial; however, if we consider 
the homomorphism 
$\e=(\e_4,\e_2)=(1,2)$, where $\e_i$ is the 
image of a meridian around $\cC_i$, then 
$$\Delta_{\cC^{(i)}_6,\e}(t)=
\begin{cases}
(t^2+1) & \text{ if } i=1,\\
1 & \text{ if } i=2.\\
\end{cases}
$$
Note that $\cC_6^{(1)}$ is a $4$-total curve, but not all of its singularities 
are $4$-essential, since one can check that $\AAA_{15}$ has a local Alexander polynomial
$\Delta_{\AAA_{15},\e}(t)=(1+t^{12})(1+t^6)(1+t^3)(1-t)$. One can also use
Degtyarev's Divisibility Criterion~\cite{degtyarev-divisibility} to prove that
the factors coming from $\Delta_{\AAA_{15},\e}(t)$ do not contribute.

Therefore, Theorem~\ref{thm-main-delta} can be applied and hence $\cC_6^{(1)}$ has a
quasi-toric relation of elliptic type $(2,4,4)$, whereas $\cC_6^{(2)}$ does not.
In particular, these are the equations for the irreducible components 
of~$\cC_6^{(1)}=\cC_4\cup \cC_2$:
\begin{equation}
\array{c}
C_4:=2xy^3+3x^2y^2+108y^2z^2-x^4,\\
C_2:=3x^2+2xy+108x^2,
\endarray
\end{equation}
which fit in the following quasi-toric relation:
$$
C_2h_1^2+h_2^4-C_4h_3^4=0,
$$
where $h_1:=y$, $h_2:=x$, and $h_3:=1$.
\end{example}

\begin{example} 
\label{ex-3-3-3} 
As an example of a quasi-toric relation of elliptic type $(3,3,3)$ we can present
the classical example $F=(y^3-z^3)(z^3-x^3)(x^3-y^3)$. The classical Alexander 
polynomial of $\cC:=\{F=0\}$ is $\Delta_{\cC}(t)=(t^2+t+1)^2(t-1)^8$ and it is 
readily seen that it is a $3$-total curve, since its singularities (besides the nodes) 
are ordinary triple points, which are $3$-essential singularities. Hence one can 
apply Theorem~\ref{cor-kulikov-gen} and derive that $F$ fits in a quasi-toric 
relation of elliptic type $(3,3,3)$. For instance:
\begin{equation}
\label{eq-333}
x^3(y^3-z^3)+y^3(z^3-x^3)+z^3(x^3-y^3)=0.
\end{equation}
However, according to Theorem~\ref{thm-main-alb} there should exist another relation
independent from~\eqref{eq-333}, namely
\begin{equation}
\label{eq-333-2}
\ell_1^3F_1+\ell_2^3F_2+\ell_3^3F_3=0,
\end{equation}
where
$$
F_i=(y-\omega_3^iz)(z-\omega_3^{i+1}x)(x-\omega_3^{i+2}y), \ \ i=1,2,3,
$$
$\omega_3$ is a third-root of unity, and
$$
\array{l}
\ell_1=(\omega_3-\omega_3^2)x+(\omega_3-\omega_3^2)y+(\omega_3^2-1)z,\\
\ell_2=(\omega_3-\omega_3^2)z+(\omega_3-\omega_3^2)x+(\omega_3^2-1)y,\\
\ell_3=(\omega_3-\omega_3^2)y+(\omega_3-\omega_3^2)z+(\omega_3^2-1)x.
\endarray
$$
\end{example}

\begin{example}
\label{ex-2a2-3a5}
In Remark~\ref{rem-total-partial} we have presented a $6$-total sextic curve $\cC$
which is also $3$-partial. In particular, according to Theorem~\ref{cor-kulikov-gen},
$\cC$ admits quasi-toric relations both of type $(2,3,6)$ and $(3,3,3)$. We will show
this explicitly. Note that $\cC$ is the union of two cuspidal cubics 
(cf.~\cite[Corollary 1.2]{oka-alexander}) $\cC_1:=\{F_1=0\}$ 
and $\cC_2:=\{F_2=0\}$ given by the following equations:
$$
\array{c}
F_1=y^3-z^3+3x^2z+2x^3,\\
F_2=y^3-z^3+3x^2z-2x^3.
\endarray
$$
One can check that the curve $\cC$ satisfies the following relations:
$$
\array{c}
3f_3^2-4f_2^3+F_1F_2h^6=0,\\
4x^3-F_1h^3+F_2h^3=0,
\endarray
$$
where $f_2:=yz+y^2+z^2-x^2$, $f_3:=z^3-x^2z-2yx^2+2yz^2+2y^2z+y^3$, and $h=1$.
\end{example}

\end{document}